\documentclass[12pt,reqno]{amsart}

\usepackage{palatino}
\usepackage{amssymb}
\usepackage{amsmath}
\usepackage{enumitem}
\usepackage{geometry}
\usepackage{graphicx}
\usepackage{microtype}
\usepackage{xcolor}
\newtheorem{theorem}{Theorem}[section]
\newtheorem{proposition}[theorem]{Proposition}

\newtheorem{lemma}[theorem]{Lemma}

\newtheorem{remark}[theorem]{Remark}

\numberwithin{equation}{section}

\begin{document}

\title[Products of composition operators]
{Some Properties of Products of Linear Fractional Composition Operators on Weighted Dirichlet Spaces}

\author{Li He}
\address{Li He: School of Mathematics and Information Science, Guangzhou University, Guangzhou 510006, China.}
\email{helichangsha1986@163.com}

\author{Yuanhao Yan*}
\address{Yuanhao Yan: School of Mathematics and Information Science, Guangzhou University, Guangzhou 510006, China.}
\email{18322912287@163.com}

\author{Shuqing Zhang*}
\address{Shuqing Zhang: School of Mathematics and Information Science, Guangzhou University, Guangzhou 510006, China.}
\email{zsq0225@163.com}

\author{Xianfeng Zhao}
\address{Xianfeng Zhao: College of Mathematics and Statistics, Chongqing University, Chongqing, 401331, China}
\email{xianfengzhao@cqu.edu.cn}

\thanks{2020 Mathematics Subject Classification: Primary 47B33; Secondary 47B10, 30H20, 46E22}	
	
\thanks{Key words: Composition operator, weighted Dirichlet space, Schatten class,
linear fractional map, weighted Bergman space}
\thanks{This work was supported by the National Natural Science Foundation of China (12371125, 12371127).}
\thanks{*Corresponding authors}	

\begin{abstract}
We study the Schatten class membership and essential norms of the operators $C_\varphi C_\psi^*$ and $C_\psi^*C_\varphi$, where $C_\varphi$ and $C_\psi$ are composition operators with nonconstant linear fractional symbols, acting on weighted Dirichlet spaces $\mathcal{D}_\alpha$ with $\alpha>-1$.
We obtain complete characterizations of their Schatten class membership in
terms of the boundary behavior of $\varphi$ and $\psi$. In particular, the
corresponding geometric conditions are independent of the Schatten exponent,
and each product belongs to every Schatten class whenever its boundary
condition is satisfied. We also investigate the essential norms of these
products. For $\alpha=0$ and $\alpha>0$, we obtain exact formulas in terms of
boundary contact and the derivatives of the symbols, while for
$-1<\alpha<0$ we obtain two-sided estimates by reducing the problem to
positive Toeplitz operators and local averages of generalized Nevanlinna
counting functions.
\end{abstract}

\maketitle

\section{Introduction}

Let $\mathbb D$ be the open unit disk in the complex plane $\mathbb C$, and
$\mathbb T$ be the unit circle. Let $dA$ denote the normalized area measure on
$\mathbb D$. For $\alpha>-1$, define
\[
dA_\alpha(z)
=
(1+\alpha)(1-|z|^2)^\alpha dA(z).
\]
For a measurable set $E\subset\mathbb D$, write
$A_\alpha(E)=\int_E dA_\alpha$.
The weighted Dirichlet space $\mathcal D_\alpha$ consists of the analytic
functions $f$ on $\mathbb D$ for which
\[
\|f\|_{\mathcal D_\alpha}^2
=
|f(0)|^2+
\int_{\mathbb D}|f'(z)|^2\,dA_\alpha(z)
<\infty.
\]
Let $\varphi$ be a nonconstant analytic self-map of $\mathbb D$. The
composition operator induced by $\varphi$ is defined by
\[
C_\varphi f=f\circ\varphi.
\]
Unlike the Hardy and Bergman spaces, not every analytic self-map induces a
bounded composition operator on Dirichlet-type space. For the nonconstant
linear fractional self-maps considered throughout this paper, however,
boundedness on $\mathcal D_\alpha$ is automatic; this also follows from the
derivative model recalled in Section~2.

A central theme in the theory of composition operators is to relate
operator theoretic properties of $C_\varphi$ to function theoretic and
geometric properties of its symbol. Compactness, essential norms, adjoint
formulas, and Schatten class membership have been studied extensively on
analytic function spaces; see, for example,
\cite{CowenMacCluer1995,Shapiro1993,Zhu2007}. If $\mathcal H$ is a separable
Hilbert space and $T$ is compact on $\mathcal H$, let
\[
s_1(T)\geqslant s_2(T)\geqslant\cdots\geqslant0
\]
denote its singular values. For $0<p<\infty$, the Schatten class
$S_p(\mathcal H)$ consists of those compact operators satisfying
\[
\sum_{n=1}^\infty s_n(T)^p<\infty.
\]
Thus Schatten class membership is a quantitative refinement of compactness. Another quantitative invariant is the essential norm, which measures the
distance of a bounded operator from the compact operators and therefore
records the size of its noncompact part. We refer to \cite{Simon2005} for
background on Schatten ideals and to \cite{Zhu2007} for standard operator
theory on analytic function spaces.

For a single composition operator, the relation between boundary behavior and
compactness has a long history. On the Hardy and Bergman spaces, angular
derivatives, Nevanlinna counting functions, and Carleson-type conditions play
a central role; see, among others, \cite{MacCluerShapiro1986,Shapiro1993}.
The Schatten ideal problem for single composition operators on the classical
Hardy and Bergman spaces was characterized by Luecking and Zhu
\cite{LueckingZhu1992}. On weighted Dirichlet spaces, boundedness and
compactness were studied by Zorboska \cite{Zorboska1998}, while Pau and
P\'erez obtained essential norm estimates, Schatten--von Neumann criteria,
and a closed range characterization \cite{PauPerez2013}. Li, Lu and Yu later
obtained exact essential norm formulas for composition operators in the
nonnegative range relevant to Section~5 \cite{LiLuYu2018}. The counting-function
criterion of Lef\`evre, Li, Queff\'elec and Rodr\'iguez-Piazza
\cite{LefevreEtAl2013}, recalled in Theorem~\ref{thm2.2}, characterizes
Schatten class membership on the weighted Dirichlet scale. When $\alpha=0$
and the symbol is univalent, it reduces to an image-area condition on
Hastings--Luecking windows. Related work on contact sets,
approximation numbers, and more general weighted analytic spaces can be found
in \cite{BourassEtAl2023,LefevreEtAl2013,LefevreEtAl2015}.

The situation becomes subtler for products involving an adjoint. Given two
analytic self-maps $\varphi$ and $\psi$, one is naturally led to the two
operators
\(
C_\varphi C_\psi^*
\text{~and~}
C_\psi^*C_\varphi.
\)

Although these products look formally symmetric, their behavior is generally
different. Clifford and Zheng initiated a systematic study of their
compactness on the Hardy space \cite{CliffordZheng1999} and later considered
the corresponding Bergman space problem \cite{CliffordZheng2003}. For
nonconstant linear fractional symbols, the two orders are controlled by two
different boundary geometries: the product $C_\varphi C_\psi^*$ is compact
precisely when $\varphi$ and $\psi$ have no common boundary value on
$\mathbb T$, whereas the compactness of $C_\psi^*C_\varphi$ is governed by
the absence of a common boundary contact point in the domain. In particular,
a product can be compact even when neither factor is compact, and compactness
can depend on the order of the factors. Further aspects of these products,
including commutators and invertibility, were investigated in
\cite{CliffordLeWiggins2014,CliffordLeviNarayan2012}.

The corresponding compactness problem on the classical Dirichlet space was
studied by G.~A. Chac\'on and G.~R. Chac\'on \cite{ChaconChacon2005}, who obtained complete
characterizations for linear fractional symbols. Their work is closely
connected with the special adjoint structure of linear fractional composition
operators. Cowen's formula on the Hardy space $H^2$ expresses the adjoint of
a linear fractional composition operator in terms of the Krein adjoint and
multiplication operators \cite{Cowen1988}. Hurst extended this formula to weighted Bergman
spaces \cite{Hurst1997}; Gallardo-Guti\'errez and Montes-Rodr\'iguez obtained an
adjoint formula on the classical Dirichlet space \cite{GallardoMontes2003};
and \v{C}u\v{c}kovi\'c and Le established a Cowen-type formula modulo compact
operators on a broad class of weighted Hardy spaces \cite{CuckovicLe2016}.
These formulas are particularly effective for products with an adjoint,
since they replace the adjoint factor by a composition operator induced by a
related linear fractional map together with analytic multipliers.

The weighted Dirichlet spaces provide a natural scale connecting several
classical Hilbert spaces of analytic functions, including the Hardy space,
the classical Dirichlet space, and weighted Bergman spaces. However, the
extension of operator-theoretic results from these endpoint spaces to the
whole weighted Dirichlet scale is not automatic. The main obstruction comes
from the lack of a direct adjoint formula for composition operators on
$\mathcal D_\alpha$. Although linear fractional symbols remain sufficiently
rigid to allow explicit boundary analysis, the corresponding products with
adjoints require a different approach from the Hardy and Bergman settings.

More recently, Leng and Zhao considered the Schatten-class version of the
product problem on the Hardy and Bergman spaces \cite{LengZhao2026}. In the
linear fractional case, they obtained complete characterizations on the Hardy
space for both orders of the product and for all $0<p<\infty$. On the Bergman
space, their forward product result again holds for every $0<p<\infty$, while
the corresponding reverse product theorem is obtained for $1<p<\infty$.
These developments naturally lead to the weighted Dirichlet setting,
where the interaction between boundary geometry and Schatten behavior can
be studied on the full scale $\alpha>-1$. The main problem addressed in this paper is to characterize the Schatten
class membership of these products in terms of the boundary geometry of
their symbols.

Unlike Hardy and Bergman spaces, the adjoint of a composition operator on weighted Dirichlet spaces does not admit a direct change of variable representation. Therefore, new arguments involving the derivative structure of the space and Toeplitz operator techniques are required. The first main result of this paper is a complete geometric characterization of the
Schatten class membership of both products. In contrast with the single
composition operator problem, the two product orders are not interchangeable and lead to different
boundary geometries. The forward product is governed by common boundary values of the images,
while the reverse product is determined by common boundary contact points
of the symbols. Specifically, for the forward product, Theorem~\ref{thm3.4} shows that,
for every $0<p<\infty$,
\[
C_\varphi C_\psi^*\in S_p(\mathcal D_\alpha)
\quad\Longleftrightarrow\quad
\varphi(\mathbb T)\cap\psi(\mathbb T)\cap\mathbb T=\varnothing.
\]
For the reverse product, Theorem~\ref{thm4.8} gives
\[
C_\psi^*C_\varphi\in S_p(\mathcal D_\alpha)
\quad\Longleftrightarrow\quad
\varphi^{-1}(\mathbb T)\cap\psi^{-1}(\mathbb T)\cap\mathbb T
=\varnothing.
\]
When $\alpha=0$, these conditions reduce to the compactness criteria in \cite{ChaconChacon2005}. The present results extend the
same boundary geometries to all weighted Dirichlet spaces and show, moreover,
that whenever the relevant boundary separation holds, the product belongs to
every Schatten class $S_q$ with $q>0$.

Beyond Schatten membership, we study the essential norms of these products
and show how the same boundary geometry controls both compactness and
quantitative noncompactness. On the classical Dirichlet space,
Theorems~\ref{thm5.2} and \ref{thm5.3} show that the essential norm is either
$0$ or $1$: it vanishes exactly under the corresponding compactness condition
and equals $1$ otherwise. For $\alpha>0$, we obtain exact formulas. More precisely, Theorems \ref{thm5.10} and \ref{thm5.11} give
\[
\|C_\varphi C_\psi^*\|_{e,\mathcal D_\alpha}^2
=
\max_{\substack{\xi,\zeta\in\mathbb T\\
\varphi(\xi)=\psi(\zeta)\in\mathbb T}}
\bigl(|\varphi'(\xi)|\,|\psi'(\zeta)|\bigr)^{-\alpha}
\]
and
\[
\|C_\psi^*C_\varphi\|_{e,\mathcal D_\alpha}^2
=
\max_{\substack{\lambda\in\mathbb T\\
|\varphi(\lambda)|=|\psi(\lambda)|=1}}
\bigl(|\varphi'(\lambda)|\,|\psi'(\lambda)|\bigr)^{-\alpha},
\]
where each maximum is understood to be zero when its indexing set is empty.
Thus the same boundary geometry that decides compactness also determines the
exact size of the essential norm.

For $-1<\alpha<0$, the boundary formulas available in the positive range are replaced by
two-sided estimates involving positive Toeplitz operators and local averages of generalized Nevanlinna
counting functions. Let $\tau$ denote the
Krein adjoint of $\psi$. For every fixed $r>0$,
Theorems~\ref{thm5.13} and \ref{thm5.14} yield
\[
\|C_\varphi C_\psi^*\|_{e,\mathcal D_\alpha}^2
\asymp
\limsup_{|z|\to1^-}
\frac{\displaystyle
\int_{D(z,r)\cap(\tau\circ\varphi)(\mathbb D)}
N_{\tau\circ\varphi,\alpha}(w)\,dA(w)}
{A_\alpha(D(z,r))}
\]
and
\[
\|C_\psi^*C_\varphi\|_{e,\mathcal D_\alpha}^2
\asymp
\limsup_{|z|\to1^-}
\frac{\displaystyle
\int_{D(z,r)\cap(\varphi\circ\tau)(\mathbb D)}
N_{\varphi\circ\tau,\alpha}(w)\,dA(w)}
{A_\alpha(D(z,r))}.
\]
Here and below, $X\asymp Y$ means that there exist positive constants $c$ and $C$, independent of the essential variable, such that $cY\leqslant X\leqslant CY$. The implicit constants are independent of $z$. These estimates are obtained
by reducing the problem to the essential norm of a positive Toeplitz operator
and then applying Yamaji's local-average estimate \cite{Yamaji2013}. This
approach is well suited to the singular weight and complements the exact
formulas available in the nonnegative range.

The proofs also explain why the two product orders must be treated
separately. For the forward product, the exact Cowen--Hurst adjoint
formula on $A_\alpha^2$ gives the factorization
\[
D_\varphi D_\psi^*
=
M_uD_{\tau\circ\varphi}M_v^*,
\]
where $M_u$ and $M_v$ are bounded invertible multiplication operators.
This reduces the Schatten and essential norm problems to those of a single
linear fractional symbol. For the reverse product, the corresponding
reduction is available only after passing modulo compact operators and using
the Cowen-type adjoint formula on $\mathcal D_\alpha$. These different
reductions lead naturally to the two distinct boundary conditions appearing
in the Schatten characterizations. In the reverse case, the Schatten
membership is obtained by finite rank approximations with exponentially
decaying tails.

The paper is organized as follows. Section~2 contains the preliminaries,
including the weighted Bergman derivative model, the Schatten criterion for a
single composition operator, the Cowen--Hurst adjoint formula, and the basic
essential norm and Toeplitz tools used later. Section~3 studies the Schatten
class membership of $C_\varphi C_\psi^*$, and Section~4 treats the reverse
product $C_\psi^*C_\varphi$. Section~5 develops the essential norm theory. The classical Dirichlet case is treated first, followed by the positive
range where explicit boundary formulas are available and the negative
range where a Toeplitz-theoretic approach is required.

\section{Preliminaries}

We collect here the tools needed in the proofs. The basic definitions of
$\mathcal D_\alpha$ and $S_p$ were given in the Introduction. We shall use the
weighted Bergman derivative model to pass between $\mathcal D_\alpha$ and
$A_\alpha^2$, a Schatten criterion for a single composition operator, the
Cowen--Hurst adjoint formula for linear fractional symbols, and several
standard facts about essential norms and Toeplitz operators. These ingredients
will be used repeatedly in Sections~3--5.

\subsection{Weighted Bergman spaces and the derivative model}\mbox{}

With $dA_\alpha$ as above, the standard weighted Bergman space $A_\alpha^2$
consists of analytic functions $g$ on $\mathbb D$ such that
\[
\|g\|_{A_\alpha^2}^2
=
\int_{\mathbb D}|g(z)|^2\,dA_\alpha(z)
<\infty.
\]
The reproducing kernel  for $A_\alpha^2$  is given by
\[
K_z^\alpha(w)=\frac{1}{(1-\overline zw)^{\alpha+2}},
\]
and we write
\[
k_z^\alpha
=
\frac{K_z^\alpha}{\|K_z^\alpha\|_{A_\alpha^2}}
\]
for the normalized reproducing kernel.
Let
\[
\mathcal D_{\alpha,0}
=
\{f\in\mathcal D_\alpha:f(0)=0\}\ \ \ \ \mathrm{and} \ \ \ \
\mathcal D_\alpha
=
\mathbb C\oplus\mathcal D_{\alpha,0}.
\]

For an analytic self-map $\varphi$ of $\mathbb D$, define the normalized
composition operator on $\mathcal D_{\alpha,0}$ by
\[
\widetilde C_\varphi f
=
f\circ\varphi-f(\varphi(0)).
\]
Moreover, we define
\[
D_\varphi g
=
(g\circ\varphi)\varphi',
\qquad g\in A_\alpha^2.
\]

\begin{proposition}\label{prop2.1}
Let $\alpha>-1$, and let $\varphi$ be an analytic self-map of $\mathbb D$
for which $C_\varphi$ is bounded on $\mathcal D_\alpha$. Define
\[
U_\alpha:\mathcal D_{\alpha,0}\longrightarrow A_\alpha^2,
\qquad
U_\alpha f=f'.
\]
Then $U_\alpha$ is unitary and
\[
U_\alpha\widetilde C_\varphi U_\alpha^*
=
D_\varphi.
\]
Moreover, relative to
$\mathcal D_\alpha=\mathbb C\oplus\mathcal D_{\alpha,0}$,
\[
C_\varphi
=
\begin{pmatrix}
I_{\mathbb C}&\Lambda_{\varphi(0)}\vspace{2mm}\\
0&\widetilde C_\varphi
\end{pmatrix},
\qquad
\Lambda_{\varphi(0)}f=f(\varphi(0)).
\]
Consequently, for every $0<p<\infty$,
\[
C_\varphi\in S_p(\mathcal D_\alpha)
\quad\Longleftrightarrow\quad
D_\varphi\in S_p(A_\alpha^2).
\]
Furthermore,
\[
\|C_\varphi\|_{e,\mathcal D_\alpha}
=
\|\widetilde C_\varphi\|_{e,\mathcal D_{\alpha,0}}
=
\|D_\varphi\|_{e,A_\alpha^2}.
\]
\end{proposition}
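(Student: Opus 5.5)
The proof is a sequence of direct verifications, ending with two soft operator-theoretic facts.

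\emph{Unitarity of $U_\alpha$.} For $f\in\mathcal D_{\alpha,0}$ we have $f(0)=0$, so
\[
\|f\|_{\mathcal D_\alpha}^2=\int_{\mathbb D}|f'|^2\,dA_\alpha=\|f'\|_{A_\alpha^2}^2 ,
\]
and $U_\alpha$ is an isometry. It is onto: every $g\in A_\alpha^2$ has the primitive $f(z)=\int_0^z g$, which lies in $\mathcal D_{\alpha,0}$ and satisfies $f'=g$. Hence $U_\alpha$ is unitary, with $U_\alpha^*g=\int_0^z g$.

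\emph{The intertwining relation.} Since $(\widetilde C_\varphi f)(0)=0$, $\widetilde C_\varphi$ maps $\mathcal D_{\alpha,0}$ into itself. It is bounded there, being $C_\varphi$ minus the rank-one map $f\mapsto f(\varphi(0))\cdot 1$; point evaluation is bounded on $\mathcal D_\alpha$. Take $g\in A_\alpha^2$ and set $f=U_\alpha^*g$. The chain rule gives
\[
(f\circ\varphi-f(\varphi(0)))'=(g\circ\varphi)\varphi'=D_\varphi g,
\]
so $U_\alpha\widetilde C_\varphi U_\alpha^*=D_\varphi$. In particular $D_\varphi$ is bounded on $A_\alpha^2$.

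\emph{The block matrix.} Write $f=c+f_0$ with $c=f(0)$ and $f_0\in\mathcal D_{\alpha,0}$. Then
\[
C_\varphi f=c+f_0\circ\varphi=\bigl(c+f_0(\varphi(0))\bigr)+\widetilde C_\varphi f_0 .
\]
The constant component is $c+\Lambda_{\varphi(0)}f_0$ and the $\mathcal D_{\alpha,0}$ component is $\widetilde C_\varphi f_0$. This is exactly the stated matrix, with $\Lambda_{\varphi(0)}$ restricted to $\mathcal D_{\alpha,0}$.

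\emph{The Schatten and essential-norm consequences.} This is the only step that needs some care. Decompose
\[
C_\varphi=\begin{pmatrix}0&0\\0&\widetilde C_\varphi\end{pmatrix}+R,
\]
where $R$ is the first row of the matrix. $R$ has rank at most one, since its range lies in $\mathbb C\oplus 0$. Schatten classes are invariant under unitary equivalence, and for $0<p<1$ they are quasi-normed ideals. Adding a finite-rank operator preserves membership in $S_p$: if $T\in S_p$ and $F$ has rank at most $k$, then $s_{n+k}(T+F)\le s_n(T)$. Also, a block-diagonal operator $0\oplus B$ lies in $S_p$ exactly when $B$ does, because it has the same nonzero singular values. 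These facts give
\[
C_\varphi\in S_p(\mathcal D_\alpha)\iff\widetilde C_\varphi\in S_p(\mathcal D_{\alpha,0})\iff D_\varphi\in S_p(A_\alpha^2).
\]
The essential norm is unchanged by compact perturbations, so $\|C_\varphi\|_e=\|0\oplus\widetilde C_\varphi\|_e$. It remains to see that $\|0\oplus B\|_e=\|B\|_e$. For "$\le$": if $K$ is compact on $\mathcal D_{\alpha,0}$, then $0\oplus K$ is compact, so $\|0\oplus B\|_e\le\|B-K\|$ for every such $K$. For "$\ge$": given compact $K$ on $\mathcal D_\alpha$, compress by the orthogonal projection $P$ onto $\mathcal D_{\alpha,0}$. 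Then $PKP|_{\mathcal D_{\alpha,0}}$ is compact and $\|B-PKP\|\le\|0\oplus B-K\|$. Finally, unitary equivalence gives $\|\widetilde C_\varphi\|_e=\|D_\varphi\|_e$, because $U_\alpha$ maps the compact operators onto the compact operators.
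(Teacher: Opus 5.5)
Your proposal is correct and matches the paper's proof step by step: the primitive argument for unitarity, the chain rule for $U_\alpha\widetilde C_\varphi U_\alpha^*=D_\varphi$, the block decomposition, and then a rank-one perturbation together with unitary equivalence for the Schatten and essential-norm conclusions. You also fill in two steps the paper only asserts, namely the compression argument giving $\|0\oplus B\|_e=\|B\|_e$ and the singular-value shift $s_{n+k}(T+F)\le s_n(T)$ for the quasi-normed case $0<p<1$.
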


\begin{proof}
For $f\in\mathcal D_{\alpha,0}$,
\[
\|U_\alpha f\|_{A_\alpha^2}^2
=
\int_{\mathbb D}|f'(z)|^2\,dA_\alpha(z)
=
\|f\|_{\mathcal D_\alpha}^2,
\]
which implies that $U_\alpha$ is isometric. Moreover, for every $g\in A_\alpha^2$,
\[
(U_\alpha^*g)(z)
=
\int_0^z g(\zeta)\,d\zeta,
\]
and hence
\[
U_\alpha U_\alpha^*g=g.
\]
Thus $U_\alpha$ is onto, and therefore unitary.
\begin{align*}
(U_\alpha\widetilde C_\varphi U_\alpha^*g)(z)
&=
\frac{d}{dz}
\left[
(U_\alpha^*g)(\varphi(z))
-
(U_\alpha^*g)(\varphi(0))
\right]
\\
&=
(U_\alpha^*g)'(\varphi(z))\varphi'(z)
\\
&=
g(\varphi(z))\varphi'(z)
\\
&=
D_\varphi g(z).
\end{align*}
Consequently,
\[
U_\alpha\widetilde C_\varphi U_\alpha^*
=
D_\varphi.
\tag{2.1}
\]

Now consider the full weighted Dirichlet space. Every
$f\in\mathcal D_\alpha$ can be written uniquely as
\[
f=c+f_0,
\qquad
c\in\mathbb C,\quad
f_0\in\mathcal D_{\alpha,0}.
\]
Then
\begin{align*}
C_\varphi f
&=
c+f_0\circ\varphi
\\
&=
c+f_0(\varphi(0))
+
\bigl(
f_0\circ\varphi-f_0(\varphi(0))
\bigr)
\\
&=
c+\Lambda_{\varphi(0)}f_0
+
\widetilde C_\varphi f_0.
\end{align*}
Since point evaluation at $\varphi(0)$ is bounded on
$\mathcal D_{\alpha,0}$, the functional
\[
\Lambda_{\varphi(0)}f_0
=
f_0(\varphi(0))
\]
is bounded. Hence, relative to the orthogonal decomposition
\[
\mathcal D_\alpha
=
\mathbb C\oplus\mathcal D_{\alpha,0},
\]
we obtain
\[
C_\varphi
=
\begin{pmatrix}
I_{\mathbb C} & \Lambda_{\varphi(0)}
\vspace{2mm}\\
0 & \widetilde C_\varphi
\end{pmatrix}.
\tag{2.2}
\]

Let
\[
F_\varphi
=
C_\varphi-
\begin{pmatrix}
0&0\vspace{2mm}\\
0&\widetilde C_\varphi
\end{pmatrix}
=
\begin{pmatrix}
I_{\mathbb C}&\Lambda_{\varphi(0)}
\vspace{2mm}\\
0&0
\end{pmatrix}.
\]
The range of $F_\varphi$ is contained in
$\mathbb C\oplus\{0\}$, and hence
\[
\operatorname{rank}(F_\varphi) \leqslant 1.
\]
Thus $F_\varphi$ belongs to $S_p(\mathcal D_\alpha)$ for every
$0<p<\infty$. Since $S_p$ is a linear ideal, it follows that
\[
C_\varphi\in S_p(\mathcal D_\alpha)
\quad\Longleftrightarrow\quad
0\oplus\widetilde C_\varphi
\in S_p(\mathcal D_\alpha).
\]
Equivalently,
\[
C_\varphi\in S_p(\mathcal D_\alpha)
\quad\Longleftrightarrow\quad
\widetilde C_\varphi
\in S_p(\mathcal D_{\alpha,0}).
\tag{2.3}
\]

Finally, by \textup{(2.1)}, $\widetilde C_\varphi$ and $D_\varphi$
are unitarily equivalent. Therefore, they have the same singular values,
and hence
\[
\widetilde C_\varphi
\in S_p(\mathcal D_{\alpha,0})
\quad\Longleftrightarrow\quad
D_\varphi
\in S_p(A_\alpha^2).
\tag{2.4}
\]
Combining \textup{(2.3)} and \textup{(2.4)}, we conclude that
\[
C_\varphi\in S_p(\mathcal D_\alpha)
\quad\Longleftrightarrow\quad
D_\varphi\in S_p(A_\alpha^2).
\]
Since $F_\varphi$ has finite rank, the block representation also gives
\[
\|C_\varphi\|_{e,\mathcal D_\alpha}
=
\|\widetilde C_\varphi\|_{e,\mathcal D_{\alpha,0}}.
\]
The unitary equivalence in \textup{(2.1)} then yields
\[
\|\widetilde C_\varphi\|_{e,\mathcal D_{\alpha,0}}
=
\|D_\varphi\|_{e,A_\alpha^2}.
\]
This completes the proof.
\end{proof}

If $\varphi$ is a linear fractional self-map of $\mathbb D$, then
$C_\varphi$ is automatically bounded on $\mathcal D_\alpha$. Indeed,
$\varphi'\in H^\infty$, while the composition operator induced by
$\varphi$ is bounded on $A_\alpha^2$. Hence
\[
D_\varphi=M_{\varphi'}C_\varphi
\]
is bounded on $A_\alpha^2$. The assertion then follows from the
unitary equivalence and the block representation above.

\subsection{A Schatten criterion on weighted Dirichlet spaces}\mbox{}

We first recall some elementary facts about Schatten classes that will be used
throughout the paper. If $T\in S_p(\mathcal H)$ and
$A,B\in\mathcal B(\mathcal H)$, then
\[
ATB\in S_p(\mathcal H).
\]
In particular, if $A$ and $B$ are bounded and invertible, then
\[
T\in S_p(\mathcal H)
\quad\Longleftrightarrow\quad
ATB\in S_p(\mathcal H).
\]
Moreover, every finite rank operator belongs to $S_p$ for every $p>0$;
see \cite{Simon2005}.

For an analytic self-map $\varphi$ of $\mathbb D$, we denote by
\[
N_{\varphi,\alpha}(w)
=
\sum_{\varphi(z)=w}(1-|z|^2)^\alpha
\]
the generalized Nevanlinna counting function, where the preimages are counted
according to multiplicity. We set $N_{\varphi,\alpha}(w)=0$ whenever
$w\notin\varphi(\mathbb D)$.

For $n\geqslant0$ and $0\leqslant j\leqslant  2^n-1$, define the Hastings--Luecking windows by
\[
R_{n,j}
=
\left\{
z\in\mathbb D:
1-2^{-n}\leqslant |z|<1-2^{-n-1},
\quad
\frac{2j\pi}{2^n}\leqslant \arg (z)<
\frac{2(j+1)\pi}{2^n}
\right\}.
\]

We shall use the counting-function assertion of
\cite[Theorem 3.1]{LefevreEtAl2013}, formulated for the subspace
$\mathcal D_{\alpha,0}$ of functions vanishing at the origin.

\begin{theorem}
\label{thm2.2}
Let $\alpha>-1$, $0<p<\infty$, and let $\varphi$ be an analytic self-map
of $\mathbb D$. Then
\[
\widetilde C_\varphi\in S_p(\mathcal D_{\alpha,0})
\]
if and only if
\[
\sum_{n=0}^\infty
\sum_{j=0}^{2^n-1}
\left[
2^{n(\alpha+2)}
\int_{R_{n,j}}N_{\varphi,\alpha}(w)\,dA(w)
\right]^{p/2}
<\infty.
\]
If $\varphi$ is univalent and $\Omega=\varphi(\mathbb D)$, then the above
condition is equivalent to
\[
\sum_{n=0}^\infty
\sum_{j=0}^{2^n-1}
\left[
2^{n(\alpha+2)}
\int_{R_{n,j}\cap\Omega}
(1-|\varphi^{-1}(w)|^2)^\alpha\,dA(w)
\right]^{p/2}
<\infty.
\]
\end{theorem}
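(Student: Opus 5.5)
Theorem~\ref{thm2.2} is quoted from \cite[Theorem 3.1]{LefevreEtAl2013}. The plan is to derive it from the derivative model of Proposition~\ref{prop2.1} together with the Luecking-type Schatten criterion for positive Toeplitz operators on $A_\alpha^2$.

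\textbf{Step 1: reduction to a Toeplitz operator.} By (2.1), $\widetilde C_\varphi\in S_p(\mathcal D_{\alpha,0})$ if and only if $D_\varphi\in S_p(A_\alpha^2)$. This in turn holds if and only if $D_\varphi^*D_\varphi\in S_{p/2}(A_\alpha^2)$. For $g\in A_\alpha^2$ the change of variables formula (counting multiplicity) gives
\[
\langle D_\varphi^*D_\varphi g,g\rangle
=\int_{\mathbb D}|g(\varphi(z))|^2|\varphi'(z)|^2\,dA_\alpha(z)
=(1+\alpha)\int_{\mathbb D}|g(w)|^2N_{\varphi,\alpha}(w)\,dA(w).
\]
Hence $D_\varphi^*D_\varphi=T_\mu$, the Toeplitz operator on $A_\alpha^2$ with the positive measure $d\mu=(1+\alpha)N_{\varphi,\alpha}\,dA$. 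This measure is a Carleson measure because $C_\varphi$ is bounded.

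\textbf{Step 2: Luecking's criterion.} I will invoke Luecking's theorem for positive Toeplitz operators on $A_\alpha^2$, valid for all exponents $0<q<\infty$. It states that $T_\mu\in S_q$ if and only if
\[
\sum_{n,j}\left(\frac{\mu(R_{n,j})}{A_\alpha(R_{n,j})}\right)^q<\infty.
\]
Since $1-|w|\asymp 2^{-n}$ on $R_{n,j}$ and $A(R_{n,j})\asymp 2^{-2n}$, we have $A_\alpha(R_{n,j})\asymp 2^{-n(\alpha+2)}$. Taking $q=p/2$ then yields exactly the stated sum $\sum[2^{n(\alpha+2)}\int_{R_{n,j}}N_{\varphi,\alpha}\,dA]^{p/2}<\infty$.

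The main obstacle is the range $q=p/2<1$. There the standard argument via averages $\widehat\mu_r(z)$ and subharmonicity must be replaced by Luecking's dyadic-decomposition argument. The key step is that a positive operator $T$ satisfies $T\in S_q$ with $q<1$ if and only if $\sum\langle Te_n,e_n\rangle^q<\infty$ over suitable near-orthonormal families of normalized kernels $k^\alpha_{z_{n,j}}$. The lower bound uses a separated-subfamily argument, and the upper bound uses the atomic decomposition of $A_\alpha^2$. I would cite this as known rather than reprove it.

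\textbf{Step 3: univalent case.} If $\varphi$ is univalent, each $w\in\Omega$ has a single preimage. Therefore $N_{\varphi,\alpha}(w)=(1-|\varphi^{-1}(w)|^2)^\alpha\chi_\Omega(w)$, and the second condition is literally the first.
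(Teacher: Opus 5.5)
Your argument is correct, but it takes a genuinely different route from the paper, because the paper does not prove Theorem~\ref{thm2.2} at all. It imports the counting-function criterion from \cite[Theorem~3.1]{LefevreEtAl2013}. Its only own input is the remark following the theorem: for univalent $\varphi$ one has $N_{\varphi,\alpha}(w)=(1-|\varphi^{-1}(w)|^2)^\alpha$ on $\Omega$ and $0$ off $\Omega$, which is exactly your Step~3.

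Your Steps~1--2 replace the citation by a derivation. The derivative model turns $\widetilde C_\varphi$ into $D_\varphi$. The area formula gives $D_\varphi^*D_\varphi=T_\mu$ with $d\mu=(1+\alpha)N_{\varphi,\alpha}\,dA$, the identity the paper itself establishes later in Lemma~\ref{lem5.12}. Luecking's trace-ideal criterion with exponent $p/2$, combined with $A_\alpha(R_{n,j})\asymp 2^{-n(\alpha+2)}$, then yields the stated sum.

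What your route buys is transparency and coherence. It shows directly why the weight must be evaluated at the preimage, so the normalization issue discussed in the paper's remark cannot arise. It also puts Sections~2 and~5 on one footing: the single positive Toeplitz operator $D_\eta^*D_\eta$ is handled by Luecking's theorem for Schatten membership and by Yamaji's estimate for the essential norm. It does not buy elementarity, since the hard range $p/2<1$ is still an external theorem, just one level deeper than the paper's citation.

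Your sketch of that range should not be taken literally. The kernel-diagonal criterion fails for general positive operators: a rank-one $f\otimes f$ with $\sum_{n,j}|\langle f,k^\alpha_{z_{n,j}}\rangle|^{p}=\infty$ lies in every $S_q$. Luecking's proof uses that $T_\mu$ is a positive superposition of localized kernel projections. Since you cite the theorem rather than rely on the sketch, this is harmless.

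One point to tighten. The theorem is stated for an arbitrary analytic self-map, while Proposition~\ref{prop2.1} presupposes that $C_\varphi$ is bounded. So ``this measure is a Carleson measure because $C_\varphi$ is bounded'' is circular in the sufficiency direction. There, first note that finiteness of the sum forces $\sup_{n,j}\mu(R_{n,j})/A_\alpha(R_{n,j})<\infty$. Sum over the $2^{k-n}$ level-$k$ windows ($k\geqslant n$) inside a Carleson box over a level-$n$ dyadic arc and use $\alpha+1>0$. This bounds the $\mu$-measure of the box by a constant times $2^{-n(\alpha+2)}$, which is the Carleson condition for $A_\alpha^2$. Hence $D_\varphi$, and with it $\widetilde C_\varphi$, is bounded before Luecking's theorem is invoked.
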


\begin{remark}
The counting-function criterion in Theorem~\ref{thm2.2} is based on
\cite[Theorem~3.1]{LefevreEtAl2013}. In the univalent case, the
weighted-area condition (3.6) in that reference uses
$A_\alpha(R_{n,j}\cap\Omega)$.
However, by the definition of the weighted counting function,
\[
N_{\varphi,\alpha}(w)
=(1-|\varphi^{-1}(w)|^2)^\alpha,
\qquad w\in\Omega.
\]
Thus the weight must be evaluated at the preimage, as in the
univalent condition of Theorem~\ref{thm2.2}. The two formulations
agree when $\alpha=0$, but the weighted area condition (3.6)
does not hold in general when $\alpha\ne0$.
\end{remark}
\subsection{The Cowen--Hurst adjoint formula}\mbox{}

Let
\[
\psi(z)
=
\frac{az+b}{cz+d}\ \ \ \
(\Delta=ad-bc \neq 0)
\]
be a linear fractional self-map of $\mathbb D$. Its Krein adjoint is
\[
\tau(z)
=
\frac{\overline a z-\overline c}
{-\overline b z+\overline d}.
\]
The following weighted Bergman adjoint formula is due to Hurst
\cite{Hurst1997}; see also \cite[Theorem~2.6]{CuckovicLe2016}.

\begin{theorem}\label{thm2.4}
Let $\alpha>-1$ and let $\psi$ and $\tau$ be as above. On $A_\alpha^2$,
\[
C_\psi^*
=
M_gC_\tau M_h^*,
\]
where
\[
g(z)
=
(-\overline b z+\overline d)^{-\alpha-2}\ \ \  \ \text{and} \ \ \ \
h(z)
=
(cz+d)^{\alpha+2}.
\]
The powers are taken with compatible analytic branches on a neighborhood of $\overline{\mathbb D}$, normalized so that $g(0)\overline{h(0)}=1$. This normalization ensures that the phase factors arising from the choice of branches cancel in the adjoint formula.
\end{theorem}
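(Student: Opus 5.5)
Here is how I would prove Theorem~\ref{thm2.4}. The identity \(C_\psi^*=M_gC_\tau M_h^*\) on \(A_\alpha^2\) is an identity of bounded operators. I would check it against the reproducing kernels \(K_w^\alpha\), whose linear span is dense. Two standard kernel relations hold for every \(w\in\mathbb D\):
\[
C_\psi^*K_w^\alpha=K_{\psi(w)}^\alpha,
\qquad
M_h^*K_w^\alpha=\overline{h(w)}\,K_w^\alpha .
\]
The first comes from \(\langle f,C_\psi^*K_w^\alpha\rangle=f(\psi(w))\); the second from \(\langle f,M_h^*K_w^\alpha\rangle=h(w)f(w)\). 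Here \(h\) and \(g\) are bounded analytic on a neighborhood of \(\overline{\mathbb D}\), because \(cz+d\) and \(-\overline b z+\overline d\) do not vanish on \(\overline{\mathbb D}\) for a self-map. Hence \(M_g\) and \(M_h\) are bounded, and \(C_\tau\) is bounded because \(\tau\) is again a linear fractional self-map of \(\mathbb D\) (standard). So it suffices to prove, for all \(z,w\in\mathbb D\),
\[
g(z)\,\overline{h(w)}\,K_w^\alpha(\tau(z))=K_{\psi(w)}^\alpha(z).
\]

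The core is an algebraic computation. First,
\[
1-\overline w\tau(z)=\frac{-\overline b z+\overline d-\overline w(\overline a z-\overline c)}{-\overline b z+\overline d}.
\]
Second, the conjugate of the numerator is \(d+cw-\overline z(b+aw)\). Third,
\[
1-\overline{\psi(w)}z=\frac{\overline{cw+d}-\overline{aw+b}\,z}{\overline{cw+d}}
=\frac{\overline{d+cw-\overline z(b+aw)}}{\overline{cw+d}}.
\]
So, writing \(Q(z,w)=\overline d+\overline c\,\overline w-z(\overline b+\overline a\,\overline w)\), we get
\[
(1-\overline w\tau(z))(-\overline b z+\overline d)=Q(z,w)=(1-\overline{\psi(w)}z)\,\overline{(cw+d)} .
\]
Raising this to the power \(-(\alpha+2)\) gives exactly
\[
K_w^\alpha(\tau(z))\,(-\overline b z+\overline d)^{-\alpha-2}\,\overline{(cw+d)^{\alpha+2}}=K_{\psi(w)}^\alpha(z),
\]
provided the non-integer powers of \(\alpha+2\) multiply correctly.

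The main obstacle is therefore this branch issue when \(\alpha\) is not an integer. The identity \((XY)^s=X^sY^s\) holds only up to a factor \(e^{2\pi i k s}\), where the integer \(k\) may depend on \((z,w)\). I would handle it by continuity. Fix branches: \(K^\alpha\) uses the principal branch, since \(\operatorname{Re}(1-\overline w\zeta)>0\) on \(\mathbb D\times\mathbb D\); \(g\) and \(h\) use analytic branches on a disk neighborhood of \(\overline{\mathbb D}\), where their bases are zero-free. Then both sides of the last display are continuous, nowhere vanishing functions on the connected set \(\mathbb D\times\mathbb D\), and their \((\alpha+2)\)-th-power-free parts agree by the algebraic identity. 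Hence their ratio is a continuous function taking values in the discrete set \(\{e^{2\pi i k(\alpha+2)}\}\), so it is constant. Evaluating at \(z=w=0\) gives the constant \(g(0)\overline{h(0)}\), because \(K^\alpha\) equals \(1\) at both ends there. The normalization \(g(0)\overline{h(0)}=1\) therefore makes the constant equal to \(1\).

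Finally, with the kernel identity in hand, \(M_gC_\tau M_h^*\) and \(C_\psi^*\) agree on the span of kernels. Both are bounded, so they are equal on \(A_\alpha^2\), which completes the argument.
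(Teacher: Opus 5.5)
Your proof is correct, and it takes a different route from the paper, which gives no proof of Theorem~\ref{thm2.4} and simply imports it from Hurst \cite{Hurst1997} and \cite[Theorem~2.6]{CuckovicLe2016}.

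You instead verify the identity directly on reproducing kernels, in the spirit of Cowen's $H^2$ argument \cite{Cowen1988}. The ingredients are $C_\psi^*K_w^\alpha=K_{\psi(w)}^\alpha$, $M_h^*K_w^\alpha=\overline{h(w)}K_w^\alpha$, and the factorization $(1-\overline w\tau(z))(-\overline b z+\overline d)=Q(z,w)=(1-\overline{\psi(w)}z)\,\overline{(cw+d)}$. All three check out, as do the facts you use for boundedness: $|b|<|d|$, the pole of $\psi$ lies off $\overline{\mathbb D}$, and $\tau$ is a self-map.

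The citation buys brevity. Your computation buys a self-contained proof. It also justifies the paper's unproved sentence that the normalization $g(0)\overline{h(0)}=1$ makes the branch phases cancel. Your continuity argument shows that the only possible discrepancy between the two sides is a single global constant, and that this constant is exactly $g(0)\overline{h(0)}$.

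One step is misworded. The set $\{e^{2\pi i k(\alpha+2)}:k\in\mathbb Z\}$ is not discrete when $\alpha$ is irrational; it is dense in $\mathbb T$. Your conclusion still holds, because the set is countable. A continuous function on the connected set $\mathbb D\times\mathbb D$ with countable range is constant, since its image is a connected countable subset of $\mathbb C$ and hence a point.

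A cleaner alternative is to argue with logarithms. Let $L_1$ and $L_4$ be the principal logarithms of $1-\overline w\tau(z)$ and $1-\overline{\psi(w)}z$. Let $L_2$ and $L_3$ be the analytic logarithms defining $g$ and $h$. Then $L_1+L_2$ and $L_4+\overline{L_3}$ are two continuous logarithms of $Q$ on $\mathbb D\times\mathbb D$, so they differ by a constant in $2\pi i\mathbb Z$. Since $\alpha+2$ is real, $\overline{h(w)}=\exp\bigl((\alpha+2)\overline{L_3(w)}\bigr)$, and the ratio of the two sides is therefore constant.
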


For a linear fractional self-map, the functions
$cz+d$ and $-\overline b z+\overline d$ have no zeros on
$\overline{\mathbb D}$. Hence the functions occurring in
Theorem~\ref{thm2.4}, as well as their reciprocals whenever
needed below, are bounded analytic functions on a neighborhood of
$\overline{\mathbb D}$.

\subsection{Essential norms and Toeplitz operators}\mbox{}

For a Hilbert space $\mathcal H$, let $\mathcal B(\mathcal H)$ and
$\mathcal K(\mathcal H)$ denote the set of bounded operators and the set of compact operators,
respectively.  Let
\[
\pi:\mathcal B(\mathcal H)
\longrightarrow
\mathcal B(\mathcal H)/\mathcal K(\mathcal H)
\]
be the quotient map. The essential norm of $T\in\mathcal B(\mathcal H)$ is defined by
\[
\|T\|_e
=
\inf_{K\in\mathcal K(\mathcal H)}\|T-K\|
=
\|\pi(T)\|.
\]
Since the Calkin algebra is a $C^*$-algebra,
\[
\|T\|_e^2
=
\|T^*T\|_e.
\]
We shall also use the ideal estimate
\[
\|ATB\|_e
\leqslant
\|A\|\,\|T\|_e\,\|B\|,
\qquad
A,B,T\in\mathcal B(\mathcal H).
\]
In particular, if $A$ and $B$ are bounded and invertible, then
\[
\|ATB\|_e\asymp\|T\|_e.
\]

Let $P_\alpha$ denote the Bergman projection from
$L^2(\mathbb D,dA_\alpha)$ onto $A_\alpha^2$. For a bounded measurable
symbol $a$, the Toeplitz and Hankel operators are defined by
\[
T_a=P_\alpha M_a|_{A_\alpha^2}\ \  \ \ \text{and} \ \ \ \
H_a=(I-P_\alpha)M_a|_{A_\alpha^2},
\]
respectively.
More generally, if $\mu$ is a positive Borel measure for which the embedding
$A_\alpha^2\to L^2(\mu)$ is bounded, the positive Toeplitz
operator $T_\mu$ is determined by
\[
\langle T_\mu f,g\rangle_{A_\alpha^2}
=
\int_{\mathbb D}f(w)\overline{g(w)}\,d\mu(w),
\qquad f,g\in A_\alpha^2.
\]

Let $z,w\in\mathbb D$. The pseudo-hyperbolic metric and Bergman metric are defined by
\[
\rho(z,w)
=
\left|\frac{z-w}{1-\overline zw}\right|\ \ \ \ \text{and}\ \ \ \
\beta(z,w)
=
\frac12\log\frac{1+\rho(z,w)}{1-\rho(z,w)},
\]
respectively.
For $r>0$, the hyperbolic disk is defined by
\[
D(z,r)=\{w\in\mathbb D:\beta(z,w)<r\}.
\]
A consequence of Yamaji's essential-norm estimate for positive Toeplitz
operators \cite[Theorem~B]{Yamaji2013} is that, for every fixed $r>0$,
\[
\|T_\mu\|_{e,A_\alpha^2}
\asymp
\limsup_{|z|\to1^-}
\frac{\mu(D(z,r))}{A_\alpha(D(z,r))},
\]
whenever $T_\mu$ is bounded on $A_\alpha^2$. To see the correspondence with
the notation in \cite{Yamaji2013}, one may set $\gamma=\alpha/2$ and
$dV_\gamma(z)=(1-|z|^2)^{2\gamma}dA(z)$. Then we have that
$dA_\alpha=(1+\alpha)dV_\gamma$, and the constant factor does not affect the
two-sided estimate. This form will be used in Section~5 for the range
$-1<\alpha<0$.

\section{The product $C_\varphi C_\psi^*$}

We now prove the Schatten class reduction that replaces the corresponding
Hardy-space lemma in \cite{LengZhao2026}.

\begin{lemma}\label{lem3.1}
Let $\alpha>-1$ and $0<p<\infty$. Let $\varphi$ and $\psi$ be
nonconstant linear fractional self-maps of $\mathbb D$, and let $\tau$
be the Krein adjoint of $\psi$. Then
\[
C_\varphi C_\psi^*
\in S_p(\mathcal D_\alpha)
\quad\Longleftrightarrow\quad
C_{\tau\circ\varphi}
\in S_p(\mathcal D_\alpha).
\]
\end{lemma}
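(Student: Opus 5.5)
Throughout, write $\mathcal D_\alpha=\mathbb C\oplus\mathcal D_{\alpha,0}$ and transport everything to the derivative model of Proposition~\ref{prop2.1}. By the block form (2.2), $C_\varphi = (0\oplus \widetilde C_\varphi) + F_\varphi$ with $F_\varphi$ of rank at most one. The same holds for $C_\psi$, and taking adjoints gives $C_\psi^* = (0\oplus\widetilde C_\psi^*) + F_\psi^*$, again with a rank one error. Hence
\[
C_\varphi C_\psi^* = 0\oplus \widetilde C_\varphi\widetilde C_\psi^* + (\text{finite rank}).
\]
Since finite rank operators lie in every $S_p$ and $S_p$ is a linear ideal, the $S_p$ question reduces to $\widetilde C_\varphi\widetilde C_\psi^*\in S_p(\mathcal D_{\alpha,0})$. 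By the unitary equivalence (2.1), this is in turn equivalent to $D_\varphi D_\psi^*\in S_p(A_\alpha^2)$.

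Next I compute $D_\psi^*$ on $A_\alpha^2$ using Theorem~\ref{thm2.4}. Since $D_\psi = M_{\psi'}C_\psi$, we have $D_\psi^* = C_\psi^* M_{\psi'}^* = M_gC_\tau M_h^* M_{\psi'}^*$. This gives
\[
D_\varphi D_\psi^* = M_{\varphi'}C_\varphi M_g C_\tau M_{h\psi'}^*.
\]
Using $C_\varphi M_g = M_{g\circ\varphi}C_\varphi$ and $C_\varphi C_\tau = C_{\tau\circ\varphi}$, this becomes
\[
M_{\varphi'\,(g\circ\varphi)}\,C_{\tau\circ\varphi}\,M_{h\psi'}^*.
\]
I then rewrite $C_{\tau\circ\varphi} = M_{1/(\tau\circ\varphi)'}D_{\tau\circ\varphi}$, where $(\tau\circ\varphi)' = (\tau'\circ\varphi)\varphi'$. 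The result is the factorization
\[
D_\varphi D_\psi^* = M_u D_{\tau\circ\varphi} M_v^*, \qquad u=\frac{g\circ\varphi}{\tau'\circ\varphi},\quad v=h\psi'.
\]
The factor $\varphi'$ cancels exactly, which avoids any difficulty from dividing by $\varphi'$ alone.

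The main step is to check that $M_u$ and $M_v$ are bounded and invertible on $A_\alpha^2$. This holds if $u$, $v$ and their reciprocals are bounded analytic functions on $\mathbb D$. For linear fractional maps, $\psi'(z)=\Delta/(cz+d)^2$ and $\tau'(z)=\overline\Delta/(-\overline b z+\overline d)^2$. Both are analytic and zero free on a neighborhood of $\overline{\mathbb D}$, because $cz+d$ and $-\overline bz+\overline d$ have no zeros on $\overline{\mathbb D}$, as noted after Theorem~\ref{thm2.4}. Also $\varphi(\overline{\mathbb D})\subset\overline{\mathbb D}$. Hence $g\circ\varphi$, $\tau'\circ\varphi$, $h$ and $\psi'$ are all continuous and zero free on $\overline{\mathbb D}$, so $u^{\pm1},v^{\pm1}\in H^\infty$.

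One also needs $\tau$ to be a self-map of $\mathbb D$, so that $\tau\circ\varphi$ is a nonconstant linear fractional self-map. This is the standard fact that the Krein adjoint of a linear fractional self-map is again one. It guarantees that $D_{\tau\circ\varphi}$ is bounded and that Proposition~\ref{prop2.1} applies to $\tau\circ\varphi$.

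With invertibility in hand, the ideal property gives $D_\varphi D_\psi^*\in S_p$ if and only if $D_{\tau\circ\varphi}\in S_p(A_\alpha^2)$. By Proposition~\ref{prop2.1}, the latter holds if and only if $C_{\tau\circ\varphi}\in S_p(\mathcal D_\alpha)$. I expect the only delicate points to be the branch bookkeeping in $g$ and $h$ and the finite-rank block reduction of the adjoint. The branch choices do not affect boundedness or invertibility, since moduli are branch independent.
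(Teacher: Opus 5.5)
Your proposal is correct and follows the paper's proof essentially step for step: the finite-rank block reduction to $\widetilde C_\varphi\widetilde C_\psi^*$, the unitary equivalence with $D_\varphi D_\psi^*$, the Cowen--Hurst formula giving $D_\varphi D_\psi^*=M_uD_{\tau\circ\varphi}M_v^*$ with the same $u=(g\circ\varphi)/(\tau'\circ\varphi)$ and $v=h\psi'$, invertibility of $M_u$ and $M_v$, and a final application of Proposition~\ref{prop2.1}. You also state explicitly that $\tau$ is a self-map of $\mathbb D$. The paper leaves this implicit, although it is needed for $C_\tau$ to make sense on $A_\alpha^2$ and for Proposition~\ref{prop2.1} to apply to $\tau\circ\varphi$.
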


\begin{proof}
By Proposition~\ref{prop2.1} and the block decomposition of
$C_\varphi C_\psi^*$,
\[
C_\varphi C_\psi^*
\in S_p(\mathcal D_\alpha)
\quad\Longleftrightarrow\quad
D_\varphi D_\psi^*
\in S_p(A_\alpha^2).
\]
Indeed, relative to
$\mathcal D_\alpha=\mathbb C\oplus\mathcal D_{\alpha,0}$,
\[
C_\varphi C_\psi^*
=
\begin{pmatrix}
I+\Lambda_{\varphi(0)}\Lambda_{\psi(0)}^*
&
\Lambda_{\varphi(0)}\widetilde C_\psi^*
\vspace{2mm}\\
\widetilde C_\varphi\Lambda_{\psi(0)}^*
&
\widetilde C_\varphi\widetilde C_\psi^*
\end{pmatrix},
\]
and all blocks except the lower-right one have finite rank. Then we obtain
\[
C_\varphi C_\psi^*
\in S_p(\mathcal D_\alpha)
\quad\Longleftrightarrow\quad
\widetilde C_\varphi\widetilde C_\psi^*
\in S_p(\mathcal D_{\alpha,0}).
\]

By Proposition~\ref{prop2.1},
\[
U_\alpha\widetilde C_\varphi U_\alpha^*
=
D_\varphi
\]
and
\[
U_\alpha\widetilde C_\psi U_\alpha^*
=
D_\psi.
\]
Taking adjoints in the second identity gives
\[
U_\alpha\widetilde C_\psi^*U_\alpha^*
=
D_\psi^*.
\]
Therefore,
\begin{align*}
U_\alpha
\widetilde C_\varphi\widetilde C_\psi^*
U_\alpha^*
&=
\bigl(U_\alpha\widetilde C_\varphi U_\alpha^*\bigr)
\bigl(U_\alpha\widetilde C_\psi^*U_\alpha^*\bigr)=
D_\varphi D_\psi^*.
\end{align*}
Since Schatten class membership is invariant under unitary equivalence,
\[
\widetilde C_\varphi\widetilde C_\psi^*
\in S_p(\mathcal D_{\alpha,0})
\quad\Longleftrightarrow\quad
D_\varphi D_\psi^*
\in S_p(A_\alpha^2).
\]
Consequently,
\[
C_\varphi C_\psi^*
\in S_p(\mathcal D_\alpha)
\quad\Longleftrightarrow\quad
D_\varphi D_\psi^*
\in S_p(A_\alpha^2).
\]
Since
\[
D_\psi=M_{\psi'}C_\psi
\]
on $A_\alpha^2$, Theorem~\ref{thm2.4} gives
\[
D_\psi^*
=
C_\psi^*M_{\psi'}^*
=
M_gC_\tau M_h^*M_{\psi'}^*
=
M_gC_\tau M_v^*,
\]
where
\[
v=\psi'h.
\]
Since
\[
\psi'(z)
=
\frac{\Delta}{(cz+d)^2},
\]
we have
\[
v(z)
=
\Delta(cz+d)^\alpha.
\]
Consequently,
\begin{align*}
D_\varphi D_\psi^*
&=
M_{\varphi'}C_\varphi M_gC_\tau M_v^*=
M_{\varphi'(g\circ\varphi)}
C_{\tau\circ\varphi}M_v^*.
\end{align*}

On the other hand,
\[
D_{\tau\circ\varphi}
=
M_{(\tau'\circ\varphi)\varphi'}
C_{\tau\circ\varphi}.
\]
A direct calculation gives
\[
\tau'(z)
=
\frac{\overline\Delta}
{(-\overline b z+\overline d)^2}.
\]
It follows that
\[
D_\varphi D_\psi^*
=
M_uD_{\tau\circ\varphi}M_v^*,
\]
where
\[
u(z)
=
\frac{g(\varphi(z))}
{\tau'(\varphi(z))}
=
\frac{1}{\overline\Delta}
\bigl(-\overline b\varphi(z)+\overline d\bigr)^{-\alpha}.
\]
Both $u$ and $v$, together with their reciprocals, belong to $H^\infty$.
Thus $M_u$ and $M_v$ are bounded and invertible on $A_\alpha^2$.
The ideal property of the Schatten classes yields
\[
D_\varphi D_\psi^*
\in S_p(A_\alpha^2)
\quad\Longleftrightarrow\quad
D_{\tau\circ\varphi}
\in S_p(A_\alpha^2).
\]
A final application of Proposition~\ref{prop2.1} gives the
desired equivalence.
\end{proof}

We next prove the Schatten dichotomy for a single linear fractional symbol using Theorem~\ref{thm2.2} and a local inverse estimate near a boundary contact point.

\begin{proposition}\label{prop3.2}
Let $\alpha>-1$, $0<p<\infty$, and let $\eta$ be a nonconstant linear
fractional self-map of $\mathbb D$. Then
\[
C_\eta\in S_p(\mathcal D_\alpha)
\quad\Longleftrightarrow\quad
\overline{\eta(\mathbb D)}\subset\mathbb D.
\]
If this condition holds, then
\[
C_\eta\in S_q(\mathcal D_\alpha)
\]
for every $0<q<\infty$.
\end{proposition}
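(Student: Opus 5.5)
By Proposition~\ref{prop2.1}, $C_\eta\in S_p(\mathcal D_\alpha)$ if and only if $\widetilde C_\eta\in S_p(\mathcal D_{\alpha,0})$. So the plan is to verify the counting-function condition of Theorem~\ref{thm2.2} in its univalent form. A nonconstant linear fractional map is univalent, so with $\Omega=\eta(\mathbb D)$ we have $N_{\eta,\alpha}(w)=(1-|\eta^{-1}(w)|^2)^\alpha$ for $w\in\Omega$. The argument splits into the two directions.

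\emph{Sufficiency.} Assume $\overline{\eta(\mathbb D)}\subset\mathbb D$. Then $\Omega\subset\{|w|\le R\}$ for some $R<1$, so only the finitely many windows $R_{n,j}$ with $1-2^{-n}\le R$ meet $\Omega$. For each such window, $\int_{R_{n,j}\cap\Omega}(1-|\eta^{-1}(w)|^2)^\alpha\,dA(w)$ is finite. To see this, change variables $w=\eta(z)$: the integral is bounded by $\int_{\mathbb D}(1-|z|^2)^\alpha|\eta'(z)|^2\,dA(z)<\infty$, since $\eta'$ is bounded and $\alpha>-1$. The series is therefore a finite sum, and it converges for every exponent $q$. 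This gives $C_\eta\in S_q$ for all $q>0$. More directly, $D_\eta=M_{\eta'}C_\eta$ is then a Hilbert--Schmidt-type operator for all orders. The counting-function route suffices, however, and it also proves the final assertion.

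\emph{Necessity.} Suppose $\overline{\eta(\mathbb D)}\not\subset\mathbb D$. Then there is $\xi\in\mathbb T$ with $\zeta=\eta(\xi)\in\mathbb T$. Since $\eta$ extends analytically and conformally across a neighborhood of $\xi$ and maps $\mathbb D$ into $\mathbb D$, the image $\Omega$ is a disk (or a half-plane image) internally tangent to $\mathbb T$ at $\zeta$. Moreover $|\eta'(\xi)|>0$ and $\eta^{-1}$ is bi-Lipschitz near $\zeta$. I will prove the local inverse estimate
\[
1-|\eta^{-1}(w)|\asymp 1-|w|
\qquad (w\in\Omega\ \text{near}\ \zeta).
\]
The upper bound $1-|\eta^{-1}(w)|\lesssim 1-|w|$ follows from the Schwarz--Pick/Julia inequality. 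For the lower bound, use the Lipschitz property of $\eta$ near $\xi$, giving $1-|w|=1-|\eta(z)|\le|\eta(z)-\eta(z/|z|)|\lesssim 1-|z|$. Hence $N_{\eta,\alpha}(w)\asymp(1-|w|)^\alpha\asymp 2^{-n\alpha}$ on $R_{n,j}\cap\Omega$ near $\zeta$.

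Next, tangency gives a Stolz-type region: the windows $R_{n,j_n}$ containing the radial points $(1-\tfrac34 2^{-n})\zeta$ satisfy $A(R_{n,j_n}\cap\Omega)\gtrsim 2^{-2n}$ for all large $n$. This holds because the disk $\Omega$, tangent at $\zeta$, contains a fixed proportion of each such window. A cleaner way to see it is that $\Omega$ contains a horodisk-like region containing a truncated cone around the radius. Then
\[
2^{n(\alpha+2)}\int_{R_{n,j_n}\cap\Omega}N_{\eta,\alpha}\,dA\gtrsim 2^{n(\alpha+2)}\,2^{-n\alpha}\,2^{-2n}=c>0
\]
for infinitely many $n$. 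The terms of the series therefore do not tend to zero, so the series diverges for every $p$, and $C_\eta\notin S_p$.

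The main obstacle is the necessity step. It requires checking that $\Omega$ genuinely occupies a fixed fraction of infinitely many dyadic windows, and that the two-sided comparison $1-|\eta^{-1}(w)|\asymp1-|w|$ holds uniformly there. For $\alpha<0$ the direction of this comparison matters, and both inequalities are needed. I would handle this by mapping to the upper half-plane, where $\eta$ near $\xi$ becomes an affine map preserving the boundary line to first order. Both facts then become explicit computations in a cone at the contact point.
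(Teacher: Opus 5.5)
Your proposal is correct and follows the paper's proof: the same reduction through Proposition~\ref{prop2.1} and the univalent form of Theorem~\ref{thm2.2}, the same finite-sum argument for sufficiency (which also gives membership in every $S_q$), and, for necessity, the same lower bound $2^{n(\alpha+2)}\cdot 2^{-n\alpha}\cdot 2^{-2n}\gtrsim 1$ for one window at each level $n$ next to the contact point. The only difference is in the local estimates. You get $1-|\eta^{-1}(w)|\asymp 1-|w|$ on all of $\Omega$ from Schwarz--Pick (using $\inf_{\mathbb D}|\eta'|>0$) together with the Lipschitz bound for $\eta$ on $\overline{\mathbb D}$, and the area bound from the fact that the windows $R_{n,j_n}$ lie in a fixed Stolz angle at $\zeta$, which the internally tangent disk $\Omega$ contains near $\zeta$. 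The paper instead obtains both the containment $E_h\subset\Omega$ and the weight estimate from a single Taylor expansion of the local inverse on the small disks $E_h$.
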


\begin{proof}
Put $\Omega=\eta(\mathbb D)$. Since $\eta$ is univalent,
Proposition~\ref{prop2.1} and Theorem~\ref{thm2.2} give
\[
C_\eta\in S_p(\mathcal D_\alpha)
\quad\Longleftrightarrow\quad
\sum_{n=0}^\infty\sum_{j=0}^{2^n-1}
\left[
2^{n(\alpha+2)}
\int_{R_{n,j}\cap\Omega}
(1-|\eta^{-1}(w)|^2)^\alpha\,dA(w)
\right]^{p/2}
<\infty.
\tag{3.1}
\]

Suppose first that $\overline{\Omega}\subset\mathbb D$.
Then $\Omega\subset\rho\mathbb D$ for some $\rho<1$, so the
integrals in \textup{(3.1)} vanish for all sufficiently large $n$.
The remaining integrals are finite, since the change of variables
formula gives
\[
\int_\Omega(1-|\eta^{-1}(w)|^2)^\alpha\,dA(w)
=
\int_{\mathbb D}
(1-|z|^2)^\alpha|\eta'(z)|^2\,dA(z)
<\infty.
\]
Here $\eta'\in H^\infty$ and $\alpha>-1$. Thus
$C_\eta\in S_p(\mathcal D_\alpha)$ for every $p>0$.

Conversely, suppose that
$\overline{\Omega}\cap\mathbb T\ne\varnothing$.
Choose $\xi,\zeta\in\mathbb T$ such that $\eta(\xi)=\zeta$.
By the Julia--Carath\'eodory theorem,
\[
\lambda=\xi\eta'(\xi)\overline{\zeta}>0.
\]
For $h=2^{-n}$, let
\[
E_h=
\left\{
w\in\mathbb C:
\left|w-\zeta\left(1-\frac{3h}{4}\right)\right|
<\frac h4
\right\}.
\tag{3.2}
\]
For $w\in E_h$, the triangle inequality gives
\[
1-h<|w|<1-\frac h2.
\]
Moreover, if $h\leq 1/2$, then
\[
|\arg(\overline{\zeta}w)|
\leq
\arctan\frac{h/4}{1-h}
\leq \frac h2.
\]
Thus $E_h$ lies in the radial annulus of the $n$th
Hastings--Luecking level and meets at most two of its angular windows.

Write
\[
\overline{\zeta}w=1-s
\qquad(w\in E_h).
\]
Then
\[
\frac h2<\operatorname{Re}s<h,
\qquad |s|<h.
\]
The local inverse of $\eta$ is analytic near $\zeta$. Its Taylor
expansion at $\zeta$ gives, uniformly for $w\in E_h$,
\[
\eta^{-1}(w)
=
\xi\left(1-\frac{s}{\lambda}+O(h^2)\right).
\]
Consequently,
\[
1-|\eta^{-1}(w)|^2
=
\frac{2\operatorname{Re}s}{\lambda}+O(h^2)
\asymp h.
\]
In particular, $\eta^{-1}(w)\in\mathbb D$ for all sufficiently
small $h$, and hence $E_h\subset\Omega$. The two-sided estimate
also gives, for every $\alpha>-1$,
\[
(1-|\eta^{-1}(w)|^2)^\alpha\asymp h^\alpha
\qquad(w\in E_h).
\]

Since $dA$ is normalized area measure,
\[
A(E_h)=\left(\frac h4\right)^2=\frac{h^2}{16}.
\]
It follows that
\[
\int_{E_h}
(1-|\eta^{-1}(w)|^2)^\alpha\,dA(w)
\gtrsim h^{\alpha+2}.
\]
As $E_h$ meets at most two windows, there is an index $j_n$ such that
\[
2^{n(\alpha+2)}
\int_{R_{n,j_n}\cap\Omega}
(1-|\eta^{-1}(w)|^2)^\alpha\,dA(w)
\gtrsim 1
\]
for every sufficiently large $n$. Hence the series in
\textup{(3.1)} diverges for every $p>0$. This proves the converse.
\end{proof}
A key step in our approach is a reduction principle for linear fractional
symbols: on every weighted Dirichlet space, the Schatten membership of a
composition operator is equivalent to a geometric condition on the image
domain. This reduction allows us to transform the product problems into
boundary geometric questions.

\begin{lemma}\label{lem3.3}
Let $\varphi$ and $\psi$ be nonconstant linear fractional self-maps of
$\mathbb D$, and let $\tau$ be the Krein adjoint of $\psi$. Then
\[
\overline{(\tau\circ\varphi)(\mathbb D)}
\subset\mathbb D
\]
if and only if
\[
\varphi(\mathbb T)
\cap
\psi(\mathbb T)
\cap
\mathbb T
=
\varnothing.
\]
\end{lemma}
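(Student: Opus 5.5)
We reduce the statement to the key property of the Krein adjoint: for $u,w\in\overline{\mathbb D}$ with $|u|=1$,
\[
|\tau(w)|=1 \iff w=\psi(\zeta)\ \text{for some } \zeta\in\mathbb T \text{ with } \tau(w)=\zeta^{-1}\text{-type relation}.
\]
Concretely, we exploit the identity $\tau(z)=1/\overline{\psi^{-1}(1/\overline z)}$, i.e. $\tau=\rho\circ\psi^{-1}\circ\rho$ with $\rho(z)=1/\overline z$ the reflection in $\mathbb T$. This is a direct computation. The inverse of $\psi$ is $\psi^{-1}(z)=(dz-b)/(-cz+a)$, so
\[
\rho(\psi^{-1}(\rho(z)))=\frac{-\overline c/\overline z+\overline a}{\overline d/\overline z-\overline b}\cdot\Bigl(\text{conj. inverted}\Bigr),
\]
and simplifying gives exactly $\tau(z)=(\overline a z-\overline c)/(-\overline b z+\overline d)$.

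Since $\rho$ fixes $\mathbb T$ pointwise, it follows that for $\lambda\in\mathbb T$ we have $\tau(\lambda)\in\mathbb T$ iff $\psi^{-1}(\lambda)\in\mathbb T$, i.e. iff $\lambda\in\psi(\mathbb T)$. Moreover, $\tau$ is itself a self-map of $\mathbb D$ (standard, and implicit in Theorem~\ref{thm2.4}), hence maps $\overline{\mathbb D}$ into $\overline{\mathbb D}$.

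Next, since $\tau\circ\varphi$ is continuous on $\overline{\mathbb D}$, we have $\overline{(\tau\circ\varphi)(\mathbb D)}=(\tau\circ\varphi)(\overline{\mathbb D})$. Points of $\mathbb D$ are mapped by $\varphi$ into $\mathbb D$ and then by $\tau$ into $\mathbb D$. Hence the closure meets $\mathbb T$ iff there is $\xi\in\mathbb T$ with $|\tau(\varphi(\xi))|=1$.

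If $\varphi(\xi)\in\mathbb D$, then $\tau(\varphi(\xi))\in\mathbb D$. So necessarily $\varphi(\xi)=\lambda\in\mathbb T$, and then $|\tau(\lambda)|=1$ iff $\lambda\in\psi(\mathbb T)$. Thus $(\tau\circ\varphi)(\overline{\mathbb D})\cap\mathbb T\neq\varnothing$ iff some $\lambda\in\varphi(\mathbb T)\cap\psi(\mathbb T)\cap\mathbb T$ exists, which is the claim.

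The only step needing care is the reflection identity and the self-map property of $\tau$. This is where I expect the main (though mild) obstacle. One must check that $\psi^{-1}$ is defined at $\rho(z)$: $\psi$ is a Möbius transformation of the Riemann sphere, so $\psi^{-1}$ is defined everywhere on the sphere. For $\tau(\mathbb D)\subset\mathbb D$, note that $\psi(\mathbb D)\subset\mathbb D$ gives $\psi^{-1}(\widehat{\mathbb C}\setminus\overline{\mathbb D})\subset\widehat{\mathbb C}\setminus\overline{\mathbb D}$ by complementation. Then $\rho$ swaps $\mathbb D$ with the exterior, so $\tau(\mathbb D)\subset\mathbb D$.

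Finally, for $\lambda\in\mathbb T$, the point $\psi^{-1}(\lambda)$ lies either in $\mathbb T$ (when $\lambda\in\psi(\mathbb T)$) or in the exterior of $\overline{\mathbb D}$. It cannot lie in $\mathbb D$, because then $\lambda=\psi(\psi^{-1}(\lambda))\in\mathbb D$. Applying $\rho$, $\tau(\lambda)$ correspondingly lies in $\mathbb T$ or in $\mathbb D$, confirming the dichotomy used above.
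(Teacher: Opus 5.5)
Your proof is correct and is essentially the paper's. Your reflection identity $\tau=\rho\circ\psi^{-1}\circ\rho$, with $\rho(z)=1/\overline z$, is exactly (3.3), $\tau=\delta\circ\widetilde\psi^{-1}\circ\delta$, and, as in the paper, you use it to get $\tau(\mathbb D)\subset\mathbb D$ and then to show that a boundary point $\xi$ with $|(\tau\circ\varphi)(\xi)|=1$ forces $\varphi(\xi)\in\mathbb T\cap\psi(\mathbb T)$, and conversely. The paper reads the identity as $\tau^{-1}(\zeta)=1/\overline{\psi(\zeta)}$, while you read it as $\tau(\lambda)=\rho(\psi^{-1}(\lambda))$, which is the same fact.

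The only slips are cosmetic: your opening display is garbled, and your intermediate fraction should be $\frac{\overline a-\overline c/z}{\overline d/z-\overline b}$, with $z$ rather than $\overline z$ and no extra factor. This does simplify to $\tau(z)$.
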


\begin{proof}
Define
\[
\delta(z)=\frac1z
\]
on the Riemann sphere $\widehat{\mathbb C}=\mathbb{C}\cup\{\infty\}$ and
\[
\widetilde\psi(z)
=
\overline{\psi(\overline z)}.
\]
A direct calculation gives
\[
\tau
=
\delta\circ(\widetilde\psi)^{-1}\circ\delta\ \ \ \ \text{and} \ \ \ \
\tau^{-1}
=
\delta\circ\widetilde\psi\circ\delta.
\tag{3.3}
\]
It follows that
\[
(\tau\circ\varphi)(\mathbb D)
=
(\delta\circ\widetilde\psi^{-1}\circ\delta\circ\varphi)(\mathbb D)
\subset
(\delta\circ\widetilde\psi^{-1})
(\widehat{\mathbb C}\backslash\overline{\mathbb D})
\subset
\delta(\widehat{\mathbb C}\backslash\overline{\mathbb D})
=
\mathbb D.
\]
Assume first that
\[
\overline{(\tau\circ\varphi)(\mathbb D)}
\not\subset\mathbb D.
\]

Since $\tau\circ\varphi$ is a linear fractional self-map and extends
continuously to $\overline{\mathbb D}$, there exist
$\xi,\zeta\in\mathbb T$ such that
\[
(\tau\circ\varphi)(\xi)=\zeta.
\]
Put
\[
\omega=\varphi(\xi).
\]
Then
\[
\omega
=
\tau^{-1}(\zeta)
=
\delta\circ\widetilde\psi\circ\delta(\zeta)
=
\frac{1}{\overline{\psi(\zeta)}}.
\]
Since
\[
|\omega|\leqslant1
\qquad\text{and}\qquad
|\psi(\zeta)|\leqslant1,
\]
the last identity implies
\[
|\omega|=|\psi(\zeta)|=1.
\]
Hence
\[
\omega=\psi(\zeta)\in\mathbb T,
\]
to get  that
\[
\varphi(\xi)
=
\psi(\zeta)
\in\mathbb T.
\]
Thus
\[
\varphi(\mathbb T)
\cap
\psi(\mathbb T)
\cap
\mathbb T
\neq\varnothing.
\]

Conversely, suppose that there exist
$\xi,\zeta,\omega_0\in\mathbb T$ such that
\[
\varphi(\xi)
=
\psi(\zeta)
=
\omega_0.
\]
By \textup{(3.3)} and $|\omega_0|=1$,
\[
\tau^{-1}(\zeta)
=
\frac{1}{\overline{\psi(\zeta)}}
=
\omega_0.
\]
Thus
\[
\tau(\omega_0)=\zeta,
\]
and hence
\[
(\tau\circ\varphi)(\xi)=\zeta\in\mathbb T.
\]
It follows that
\[
\overline{(\tau\circ\varphi)(\mathbb D)}
\not\subset\mathbb D.
\]
The proof is complete.
\end{proof}

We can now state the main result of this section.

\begin{theorem}\label{thm3.4}
Let $\alpha>-1$ and $0<p<\infty$. Suppose that $\varphi$ and $\psi$
are nonconstant linear fractional self-maps of $\mathbb D$. Then
\[
C_\varphi C_\psi^*
\in S_p(\mathcal D_\alpha)
\]
if and only if
\[
\varphi(\mathbb T)
\cap
\psi(\mathbb T)
\cap
\mathbb T
=
\varnothing.
\]
Moreover, whenever this condition holds,
\[
C_\varphi C_\psi^*\in S_q(\mathcal D_\alpha)
\]
for every $q\in(0,\infty)$.
\end{theorem}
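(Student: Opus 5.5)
The theorem follows by chaining the three preceding results, so the plan is a short composition of equivalences. First, by Lemma~\ref{lem3.1}, for the given $0<p<\infty$ we have
\[
C_\varphi C_\psi^*\in S_p(\mathcal D_\alpha)
\quad\Longleftrightarrow\quad
C_{\tau\circ\varphi}\in S_p(\mathcal D_\alpha),
\]
where $\tau$ is the Krein adjoint of $\psi$. To apply Proposition~\ref{prop3.2} to $\eta=\tau\circ\varphi$, I need $\eta$ to be a nonconstant linear fractional self-map of $\mathbb D$. It is linear fractional as a composition of two Möbius maps. It is nonconstant because both factors are invertible on the Riemann sphere: $\varphi$ has $\Delta\neq0$, and $\tau$ has determinant $\overline{ad-bc}\neq0$. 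The self-map property was verified in the proof of Lemma~\ref{lem3.3}, which shows $(\tau\circ\varphi)(\mathbb D)\subset\mathbb D$.

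Proposition~\ref{prop3.2} then gives
\[
C_{\tau\circ\varphi}\in S_p(\mathcal D_\alpha)
\quad\Longleftrightarrow\quad
\overline{(\tau\circ\varphi)(\mathbb D)}\subset\mathbb D,
\]
and Lemma~\ref{lem3.3} converts the right-hand side into $\varphi(\mathbb T)\cap\psi(\mathbb T)\cap\mathbb T=\varnothing$. Chaining the three equivalences proves the characterization for the fixed $p$.

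For the ``moreover'' part, the boundary condition does not involve $p$. So if it holds, then for any $q\in(0,\infty)$ I apply the same chain with $q$ in place of $p$; equivalently, I use the final clause of Proposition~\ref{prop3.2} together with Lemma~\ref{lem3.1} at exponent $q$. This gives $C_\varphi C_\psi^*\in S_q(\mathcal D_\alpha)$.

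There is no real obstacle, since all the analytic work sits in the cited results. The only point to check is the hypothesis of Proposition~\ref{prop3.2}, namely that $\tau\circ\varphi$ is a \emph{nonconstant} linear fractional self-map, which rests on the nonvanishing determinants noted above.
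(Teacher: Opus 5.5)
Your proposal is correct and matches the paper's proof: you chain Lemma~\ref{lem3.1}, then Proposition~\ref{prop3.2} with $\eta=\tau\circ\varphi$, then Lemma~\ref{lem3.3}, and you handle the ``moreover'' clause by noting that the boundary condition does not depend on $p$. Your explicit check that $\tau\circ\varphi$ is a nonconstant linear fractional self-map ($\det\tau=\overline{ad-bc}\neq0$, plus the inclusion proved in Lemma~\ref{lem3.3}) verifies a hypothesis the paper leaves implicit, and it is a worthwhile addition.
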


\begin{proof}
Let $\tau$ be the Krein adjoint of $\psi$. By
Lemma~\ref{lem3.1},
\[
C_\varphi C_\psi^*
\in S_p(\mathcal D_\alpha)
\quad\Longleftrightarrow\quad
C_{\tau\circ\varphi}
\in S_p(\mathcal D_\alpha).
\]
By Proposition~\ref{prop3.2},
\[
C_{\tau\circ\varphi}
\in S_p(\mathcal D_\alpha)
\quad\Longleftrightarrow\quad
\overline{(\tau\circ\varphi)(\mathbb D)}
\subset\mathbb D.
\]
Finally, Lemma~\ref{lem3.3} gives
\[
\overline{(\tau\circ\varphi)(\mathbb D)}
\subset\mathbb D
\quad\Longleftrightarrow\quad
\varphi(\mathbb T)
\cap
\psi(\mathbb T)
\cap
\mathbb T
=
\varnothing.
\]
From the three equivalences above, membership in one Schatten class is equivalent
to
\[
\varphi(\mathbb T)
\cap
\psi(\mathbb T)
\cap
\mathbb T
=
\varnothing,
\]
and this condition does not depend on $p$. This completes the proof.
\end{proof}

\begin{remark}
The above reduction is specific to the product $C_\varphi C_\psi^*$.
Indeed, for the reverse product, we have
\[
D_\psi^*D_\varphi
=
M_gC_\tau M_v^*M_{\varphi'}C_\varphi.
\]
Here, the adjoint $M_v^*$ prevents the two composition operators
from being combined into a single composition operator. Hence the
preceding argument does not directly apply to $C_\psi^*C_\varphi$.
\end{remark}

\section{The reverse product $C_\psi^*C_\varphi$}

The preceding remark shows that the exact factorization used for
$C_\varphi C_\psi^*$ does not directly apply to the reverse product.
We now give a separate argument.  The necessity of the boundary
condition will be obtained from a compactness reduction modulo the compact
operators, while the sufficiency will follow from finite rank
approximations with exponentially decaying tails.

For a Hilbert space $\mathcal H$, we denote by
$\mathcal B(\mathcal H)$ the algebra of bounded linear operators on
$\mathcal H$ and by $\mathcal K(\mathcal H)$ the ideal of compact
operators. For $A,B\in\mathcal B(\mathcal H)$, their commutator is
defined by
\[
[A,B]=AB-BA.
\]
We also write
\[
\mathcal C(\mathcal H)
=
\mathcal B(\mathcal H)/\mathcal K(\mathcal H)
\]
for the Calkin algebra.

We begin with the adjoint formula on the weighted Dirichlet space.
\begin{lemma}\label{lem4.1}
Let $\alpha>-1$, and let
\[
\eta(z)=\frac{az+b}{cz+d}
\]
be a nonconstant linear fractional self-map of $\mathbb D$.  Let $\sigma$
be the Krein adjoint of $\eta$.  Then
\[
C_\eta^*
-
M_{g_\eta}C_\sigma M_{h_\eta}^*
\]
is compact on $\mathcal D_\alpha$, where
\[
g_\eta(z)=(-\overline b z+\overline d)^{-\alpha}  \ \ \ \ \text{and}\ \ \ \
h_\eta(z)=(cz+d)^\alpha.
\]
The powers are taken with compatible analytic branches on a neighborhood of $\overline{\mathbb D}$, normalized so that $g_\eta(0)\overline{h_\eta(0)}=1$.
\end{lemma}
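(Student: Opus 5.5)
The plan is to prove the identity exactly in the derivative model $A_\alpha^2$ and then move it back to $\mathcal D_\alpha$. The transfer back creates only finite rank and compact errors, coming from the constants and from the commutator between differentiation and multiplication.

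\textbf{Step 1: the exact identity on $A_\alpha^2$.} We use the computation in the proof of Lemma~\ref{lem3.1}, with $\psi$ replaced by $\eta$ and $\tau$ by $\sigma$. On $A_\alpha^2$ it gives
\[
D_\eta^*=M_{g}C_\sigma M_v^*,
\qquad g(z)=(-\overline b z+\overline d)^{-\alpha-2},\quad v=\eta' h=\Delta(cz+d)^\alpha ,
\]
where $g$ and $h$ are the functions of Theorem~\ref{thm2.4}. Since $\sigma'(z)=\overline\Delta(-\overline bz+\overline d)^{-2}$, we have $C_\sigma=M_{1/\sigma'}D_\sigma$. We also have $M_v^*=\overline\Delta\,M_{h_\eta}^*$. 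Together these give
\[
D_\eta^*=M_{g/\sigma'}D_\sigma M_v^*
=\frac{\overline\Delta}{\overline\Delta}\,M_{g_\eta}D_\sigma M_{h_\eta}^*
=M_{g_\eta}D_\sigma M_{h_\eta}^* .
\]
We will choose the branches of $g_\eta$ and $h_\eta$ as the ones induced by the branches of $g$ and $h$ in Theorem~\ref{thm2.4}, after dividing by $\sigma'$ and multiplying by $\eta'$ respectively. Then no stray unimodular constant appears, and the normalization $g_\eta(0)\overline{h_\eta(0)}=1$ follows from $g(0)\overline{h(0)}=1$.

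\textbf{Step 2: transfer back to $\mathcal D_\alpha$.} By Proposition~\ref{prop2.1}, $C_\eta^*$ and $0\oplus\widetilde C_\eta^*$ differ by a finite rank operator, and likewise $C_\sigma$ and $0\oplus\widetilde C_\sigma$. Moreover $\widetilde C_\eta^*=U_\alpha^*D_\eta^*U_\alpha$ and $\widetilde C_\sigma=U_\alpha^*D_\sigma U_\alpha$. It therefore suffices to show that, for $m$ analytic on a neighborhood of $\overline{\mathbb D}$, the operator
\[
U_\alpha^*M_mU_\alpha - P_0M_m|_{\mathcal D_{\alpha,0}}
\]
is compact on $\mathcal D_{\alpha,0}$, where $P_0$ is the orthogonal projection onto $\mathcal D_{\alpha,0}$. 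By taking adjoints, the same then holds with $M_m^*$ in place of $M_m$. Granting this, we apply it to $m=g_\eta$ and to $m=h_\eta$ and insert it into $U_\alpha^*M_{g_\eta}D_\sigma M_{h_\eta}^*U_\alpha$. All the operators involved are bounded, because $M_m$ is bounded on $\mathcal D_\alpha$ and on $A_\alpha^2$ for such $m$. Hence $C_\eta^*\equiv M_{g_\eta}C_\sigma M_{h_\eta}^*$ modulo compact operators, with the finite rank pieces from the block decomposition absorbed into the compact error.

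\textbf{Step 3: the compactness claim (main obstacle).} For $f\in\mathcal D_{\alpha,0}$ we have $(mf)'=mf'+m'f$. Hence
\[
U_\alpha(P_0M_mf)-M_mU_\alpha f=m'f,
\]
and so the difference in Step 2 equals $U_\alpha^*M_{m'}J$, where $J:\mathcal D_{\alpha,0}\to A_\alpha^2$ is the inclusion $f\mapsto f$. Since $m'\in H^\infty$, it remains to prove that $J$ is compact.

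In Taylor coefficients, $\|f\|_{\mathcal D_\alpha}^2\asymp\sum n^{1-\alpha}|\hat f(n)|^2$ and $\|f\|_{A_\alpha^2}^2\asymp\sum n^{-1-\alpha}|\hat f(n)|^2$. The ratio of the weights is $n^{-2}\to0$, so $J$ is a norm limit of its finite rank truncations to the first $N$ coefficients, and is therefore compact.

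The points to check with care are the uniform comparability of these weights for all $\alpha>-1$, which follows from standard beta-function asymptotics, and the bookkeeping of the finite rank terms coming from constants.
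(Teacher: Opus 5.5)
Your argument is correct, but it takes a different route from the paper's proof.

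\textbf{The paper's route.} The paper proves Lemma~\ref{lem4.1} by citation. It computes $\beta_n=\|z^n\|_{\mathcal D_\alpha}\asymp n^{(1-\alpha)/2}$, views $\mathcal D_\alpha$ as a weighted Hardy space $H^2(\beta)$ with $t=(1-\alpha)/2$, and applies \cite[Theorem~3.2]{CuckovicLe2016}, reading off the exponent $2t-1=-\alpha$.

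\textbf{Your route.} You derive the lemma from tools already in the paper.
\begin{enumerate}
\item Hurst's exact formula (Theorem~\ref{thm2.4}), combined with $D_\eta=M_{\eta'}C_\eta$ and $C_\sigma=M_{1/\sigma'}D_\sigma$, gives the exact identity $D_\eta^*=M_{g_\eta}D_\sigma M_{h_\eta}^*$ on $A_\alpha^2$. The factors $\overline\Delta$ cancel, and $g_\eta(0)\overline{h_\eta(0)}=g(0)\overline{h(0)}=1$, so the branch normalization is inherited.
\item Transferring back to $\mathcal D_\alpha$ costs only the finite-rank block terms and the identity $U_\alpha\widetilde M_mU_\alpha^*-M_m=M_{m'}JU_\alpha^*$. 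Here $\widetilde M_m=M_m|_{\mathcal D_{\alpha,0}}$ and $J:\mathcal D_{\alpha,0}\to A_\alpha^2$ is the compact inclusion.
\item You correctly handle the point that $M_{h_\eta}^*$ in the statement is the $\mathcal D_\alpha$-adjoint, not the $A_\alpha^2$-adjoint. Its compression to $\mathcal D_{\alpha,0}$ is $(\widetilde M_{h_\eta})^*$, which is exactly what your adjoint step uses.
\end{enumerate}

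\textbf{Comparison.} The paper's route is shorter and places the result inside a general theory of weighted Hardy spaces, at the price of relying on an external theorem. Yours is self-contained. It uses the same ingredient the paper later invokes in Lemma~\ref{lem5.8} and Remark~\ref{rem5.9}, and it shows that Lemma~\ref{lem4.1} and the factorization in Lemma~\ref{lem3.1} are one computation. It also gives a quantitative bonus. $J$ sends the orthonormal basis $\{z^n/\beta_n\}$ to mutually orthogonal vectors of norm $\|z^n\|_{A_\alpha^2}/\bigl(n\|z^{n-1}\|_{A_\alpha^2}\bigr)\asymp 1/n$. Hence the error $C_\eta^*-M_{g_\eta}C_\sigma M_{h_\eta}^*$ lies in $S_p(\mathcal D_\alpha)$ for every $p>1$, not merely in $\mathcal K(\mathcal D_\alpha)$.
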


\begin{proof}
Let
\[
\beta_n=\|z^n\|_{\mathcal D_\alpha},
\qquad n\geqslant1.
\]
A direct calculation gives
\[
\beta_n^2
=
\Gamma(\alpha+2)
\frac{n^2\Gamma(n)}{\Gamma(n+\alpha+1)},
\]
to get that
\[
\beta_n
\asymp
n^{(1-\alpha)/2},\]
since
\[\lim_{n\to\infty}
\frac{\beta_n}{n^{(1-\alpha)/2}}
=
\sqrt{\Gamma(\alpha+2)}.
\]
Thus $\mathcal D_\alpha$ is a weighted Hardy space $H^2(\beta)$ in the
sense of \cite{CuckovicLe2016}, with
\(
t=\frac{1-\alpha}{2}.
\)
Using $2t-1=-\alpha$, we have by \cite[Theorem 3.2]{CuckovicLe2016}  that
\[
C_\eta^*
-
M_{g_\eta}C_\sigma M_{h_\eta}^*
\in\mathcal K(\mathcal D_\alpha).
\]
This completes the proof.
\end{proof}

We shall also use the following elementary commutator fact.

\begin{lemma}\label{lem4.2}
Suppose that $u$ and $v$ are analytic on a neighborhood of
$\overline{\mathbb D}$. Then
\[
[M_u^*,M_v]
\in
\mathcal K(\mathcal D_\alpha).
\]
\end{lemma}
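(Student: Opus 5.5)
We plan to show that $[M_u^*,M_v]$ is compact on $\mathcal D_\alpha$ by reducing to a statement about multipliers and the Bergman derivative model. The first step is to dispose of the constants: by the decomposition $\mathcal D_\alpha=\mathbb C\oplus\mathcal D_{\alpha,0}$, finite rank perturbations do not matter. On $\mathcal D_{\alpha,0}$ we use $U_\alpha$ from Proposition~\ref{prop2.1}. We have
\[
U_\alpha M_v U_\alpha^* g = (vG)' = vg + v'G, \qquad G=U_\alpha^*g=\int_0^z g.
\]
The operator $g\mapsto v'\int_0^z g$ is compact on $A_\alpha^2$, since integration $A_\alpha^2\to\mathcal D_{\alpha,0}$ is bounded and $\mathcal D_\alpha\hookrightarrow A^2_\alpha$ is compact. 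Here $v'$ is analytic on a neighborhood of $\overline{\mathbb D}$, hence bounded. Likewise $M_vP_0$ differs from $P_0M_v$ by a rank-one operator, where $P_0$ is the projection killing constants. Consequently, modulo compact operators, $M_v$ on $\mathcal D_\alpha$ is unitarily equivalent to the multiplication operator $M_v=T_v$ on $A_\alpha^2$, and similarly $M_u$ to $T_u$ on $A_\alpha^2$. Since the compact operators form a $*$-ideal, taking adjoints gives $M_u^*\equiv T_u^*=T_{\overline u}$.

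It therefore suffices to show that $[T_{\overline u},T_v]$ is compact on $A_\alpha^2$ for $u,v$ continuous on $\overline{\mathbb D}$. This is the standard Toeplitz calculus. We write
\[
T_{\overline u}T_v-T_vT_{\overline u}=T_{\overline u v}-T_vT_{\overline u} = H_{\overline v}^*H_{\overline u}
\]
using that $T_v$ is analytic multiplication (so $T_{\overline u}T_v=T_{\overline uv}$). The key fact is that $H_f$ is compact on $A_\alpha^2$ for $f\in C(\overline{\mathbb D})$. To prove it, approximate $f$ uniformly by polynomials in $z,\overline z$. Then $H_{\overline z^k}$ is compact, since on monomials $z^n$ it sends $z^n$ to $\overline z^k z^n - P_\alpha(\overline z^kz^n)$, whose norm tends to $0$ relative to $\|z^n\|$; in other words it is a diagonal-type operator with vanishing weights. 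Alternatively, compactness of $H_{\overline z}$ follows from $T_zT_{\overline z}-T_{|z|^2}$, whose symbol vanishes at the boundary, so that it is compact.

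We expect the main obstacle to be the reduction step: checking carefully that the identification of $M_v$ on $\mathcal D_\alpha$ with $T_v$ on $A_\alpha^2$ holds modulo compacts, including the boundedness of $M_v$ on $\mathcal D_\alpha$ for all $\alpha>-1$. In the range $-1<\alpha<0$ the space $\mathcal D_\alpha$ sits inside the Dirichlet-type regime, and the compactness of $g\mapsto v'\int_0^z g$ must be justified there. We plan to handle this by noting that integration maps $A_\alpha^2$ boundedly into $\mathcal D_{\alpha}\subset A^2_{\alpha}$, with the inclusion compact, since the norms of $z^n$ satisfy $\beta_n\asymp n^{(1-\alpha)/2}$ versus $\|z^n\|_{A^2_\alpha}\asymp n^{-(1+\alpha)/2}$, and the ratio tends to zero.
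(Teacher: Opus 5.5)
Your argument is correct, but it follows a different route from the paper's.

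\textbf{The paper's route.} The paper works intrinsically on $\mathcal D_\alpha$. In the basis $e_n=z^n/\beta_n$, $M_z$ is a weighted shift with weights $\omega_n=\beta_{n+1}/\beta_n\to1$. So $[M_z^*,M_z]$ is diagonal with entries $\omega_n^2-\omega_{n-1}^2\to0$, and $\pi(M_z)$ is normal in the Calkin algebra. This gives $[M_p^*,M_q]\in\mathcal K(\mathcal D_\alpha)$ for polynomials $p,q$. The general case follows by approximating $u,v$ by polynomials in the norm $\|w\|_\infty+\|w'\|_\infty$, which dominates the multiplier norm on $\mathcal D_\alpha$.

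\textbf{Your route.} You transfer the problem to $A_\alpha^2$ through the derivative model, using $U_\alpha M_vU_\alpha^*=M_v+v'\int_0^z(\cdot)$ with compact second term. This is the same identity the paper only exploits later, in Lemma~\ref{lem5.8} and Remark~\ref{rem5.9}. You then conclude from $[T_{\overline u},T_v]=H_{\overline v}^*H_{\overline u}$, which is the case $a=v$, $b=\overline u$ of the identity in Lemma~\ref{lem5.6}, together with compactness of Hankel operators with continuous symbols.

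\textbf{Comparison.} The paper's argument is more elementary and self-contained: it needs only the asymptotics of $\beta_n$. Yours explains the lemma as inherited from the Bergman-space Toeplitz calculus and avoids the $C^1$ polynomial approximation. It also works without change for analytic $u,v$ with $u',v'$ merely bounded, since such functions are Lipschitz and hence continuous on $\overline{\mathbb D}$. The cost is importing Hankel-operator facts, which you may simply cite from Zhu, as the paper does in Lemma~\ref{lem5.6}.

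\textbf{Small repairs.}
Your ``alternative'' justification, that $T_zT_{\overline z}-T_{|z|^2}$ is compact because ``its symbol vanishes at the boundary'', is not meaningful as stated: this operator is not a Toeplitz operator with a symbol. It equals $-H_{\overline z}^*H_{\overline z}$ and is diagonal in the monomial basis with entries tending to $0$; that is the correct reason.

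In your main argument, state explicitly that the vectors $H_{\overline z^k}z^n$, $n\geqslant0$, are mutually orthogonal in $L^2(\mathbb D,dA_\alpha)$. They have distinct angular frequencies and the measure is radial. This orthogonality is what turns the vanishing ratios $\|H_{\overline z^k}z^n\|/\|z^n\|\to0$ into compactness. The mixed monomials are then handled by $H_{z^j\overline z^k}=H_{\overline z^k}T_{z^j}$.
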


\begin{proof}
Put
\[
\beta_0=1,
\qquad
\beta_n=\|z^n\|_{\mathcal D_\alpha},
\quad n\geqslant1,
\]
and let
\[
e_n=\frac{z^n}{\beta_n},
\qquad n\geqslant0.
\]
Then $\{e_n\}_{n=0}^\infty$ is an orthonormal basis of
$\mathcal D_\alpha$. Moreover,
\[
M_ze_n=\omega_ne_{n+1},
\qquad
\omega_n=\frac{\beta_{n+1}}{\beta_n}.
\]
By the asymptotic formula obtained in the proof of
Lemma~\ref{lem4.1}, we have
\[
\beta_n
\sim
\sqrt{\Gamma(\alpha+2)}\,n^{(1-\alpha)/2},
\]
and hence
\[
\omega_n\rightarrow 1 \ \ \ \ (n\rightarrow \infty).
\]

Since $\{e_n\}_{n=0}^\infty$ is an orthonormal basis, the adjoint of $M_z$
satisfies
\[
M_z^*e_0=0,
\qquad
M_z^*e_{n+1}=\omega_ne_n,
\quad n\geqslant0.
\]
Indeed, for $j,n\geqslant0$,
\[
\langle M_z^*e_{n+1},e_j\rangle
=
\langle e_{n+1},M_ze_j\rangle
=
\omega_j\langle e_{n+1},e_{j+1}\rangle,
\]
which vanishes unless $j=n$.  It follows that
\[
[M_z^*,M_z]e_0=\omega_0^2e_0,
\]
while,  for $n\geqslant1$,
\[
[M_z^*,M_z]e_n
=
\bigl(\omega_n^2-\omega_{n-1}^2\bigr)e_n.
\]
Thus $[M_z^*,M_z]$ is diagonal with respect to the orthonormal basis
$\{e_n\}_{n=0}^\infty$. Since $\omega_n\to1$,
\[
\omega_n^2-\omega_{n-1}^2\rightarrow 0 \ \ \ \ (n\rightarrow \infty).
\]
Hence its diagonal entries tend to zero. To see explicitly that the
operator is compact, let $T_N$ denote the diagonal operator obtained
from $[M_z^*,M_z]$ by retaining only its first $N+1$ diagonal entries.
Then $T_N$ has finite rank and
\[
\|[M_z^*,M_z]-T_N\|
=
\sup_{n>N}
\left|\omega_n^2-\omega_{n-1}^2\right|
\rightarrow 0 \ \ \ \  (N\rightarrow \infty).
\]
Therefore,
\[
[M_z^*,M_z]\in\mathcal K(\mathcal D_\alpha).
\]

Let
\[
\pi:
\mathcal B(\mathcal D_\alpha)
\longrightarrow
\mathcal B(\mathcal D_\alpha)/\mathcal K(\mathcal D_\alpha)
\]
be the quotient map. Since
\[
\pi([M_z^*,M_z])=0,
\]
we have
\[
\pi(M_z)^*\pi(M_z)
=
\pi(M_z)\pi(M_z)^*,
\]
so $\pi(M_z)$ is normal in the Calkin algebra.

Now let $p$ and $q$ be analytic polynomials. Since
\[
M_p=p(M_z),
\qquad
M_q=q(M_z),
\]
and $\pi(M_z)$ commutes with its adjoint, we obtain
\[
\pi(M_p)^*\pi(M_q)
=
\pi(M_q)\pi(M_p)^*.
\]
Thus
\[
\pi([M_p^*,M_q])=0,
\]
and hence
\[
[M_p^*,M_q]
\in
\mathcal K(\mathcal D_\alpha).
\]

Finally, let $u$ and $v$ be analytic on a neighborhood of
$\overline{\mathbb D}$.  There are two sequences of polynomials $\{p_n\}_{n=1}^\infty$ and
$\{q_n\}_{n=1}^\infty$ such that
\[
p_n\rightarrow u,
\qquad
p_n'\rightarrow u',
\]
and
\[
q_n\rightarrow v,
\qquad
q_n'\rightarrow v'
\]
uniformly on $\overline{\mathbb D}$.

For a function $w$ analytic on a neighborhood of
$\overline{\mathbb D}$, we have
\[
(wf)'=w'f+wf'.
\]
Using the continuous embedding
$\mathcal D_\alpha\rightarrow A_\alpha^2$, it follows that
\[
\|M_wf\|_{\mathcal D_\alpha}
\lesssim
\bigl(\|w\|_\infty+\|w'\|_\infty\bigr)
\|f\|_{\mathcal D_\alpha}.
\]
Consequently,
\[
M_{p_n}\rightarrow M_u\ \ \ \ \text{and} \ \ \ \
M_{q_n}\rightarrow M_v
\]
as $n\rightarrow \infty$ in the operator norm. Since taking adjoints preserves the operator norm, we also have
\[
M_{p_n}^*\rightarrow M_u^* \ \ \ \ \mathrm{and} \ \ \ \
M_{q_n}^*\rightarrow M_v^*
\]
as $n\rightarrow \infty$ in the operator norm. Moreover, the sequences
$\{M_{p_n}\}_{n=1}^\infty$ and $\{M_{q_n}\}_{n=1}^\infty$ are uniformly bounded. Hence
\begin{align*}
&\bigl\|
[M_{p_n}^*,M_{q_n}]
-
[M_u^*,M_v]
\bigr\| \\
&\leqslant
\bigl\|
M_{p_n}^*M_{q_n}-M_u^*M_v
\bigr\|
+
\bigl\|
M_{q_n}M_{p_n}^*-M_vM_u^*
\bigr\| \\
&\leqslant
\|M_{p_n}^*-M_u^*\|\,\|M_{q_n}\|
+
\|M_u^*\|\,\|M_{q_n}-M_v\| \\
&\quad+
\|M_{q_n}-M_v\|\,\|M_{p_n}^*\|
+
\|M_v\|\,\|M_{p_n}^*-M_u^*\|.
\end{align*}
Therefore,
\[
[M_{p_n}^*,M_{q_n}]
\rightarrow
[M_u^*,M_v]
\]
 as $n\rightarrow \infty$ in the  operator norm.

For each $n$, the functions $p_n$ and $q_n$ are analytic polynomials,
and hence, by the preceding argument,
\[
[M_{p_n}^*,M_{q_n}]
\in
\mathcal K(\mathcal D_\alpha).
\]
Since $\mathcal K(\mathcal D_\alpha)$ is closed in the operator norm,
the operator-norm limit is also compact. Thus
\[
[M_u^*,M_v]
\in
\mathcal K(\mathcal D_\alpha),
\]
which finishes the proof.
\end{proof}

We next reduce compactness of the reverse product to compactness of a
single composition operator.

\begin{lemma}\label{lem4.3}
Let $\alpha>-1$, and let $\varphi$ and $\psi$ be nonconstant linear
fractional self-maps of $\mathbb D$.  Let $\tau$ be the Krein adjoint of
$\psi$.  Then
\[
C_\psi^*C_\varphi
\in\mathcal K(\mathcal D_\alpha)
\quad\Longleftrightarrow\quad
C_{\varphi\circ\tau}
\in\mathcal K(\mathcal D_\alpha).
\]
\end{lemma}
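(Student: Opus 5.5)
Work in the Calkin algebra $\mathcal C(\mathcal D_\alpha)$ with quotient map $\pi$, and use Lemma~\ref{lem4.1} for $\eta=\psi$, whose Krein adjoint is $\tau$. Writing $g=g_\psi$ and $h=h_\psi$, we have
\[
\pi(C_\psi^*C_\varphi)=\pi(M_g)\,\pi(C_\tau)\,\pi(M_h)^*\,\pi(C_\varphi).
\]
The aim is to move $\pi(M_h)^*$ to the far right, so that $C_\tau$ and $C_\varphi$ become adjacent and combine into $C_\tau C_\varphi$. Since $C_\tau C_\varphi f=f\circ\varphi\circ\tau$, this product is exactly $C_{\varphi\circ\tau}$. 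The needed commutation relation is
\[
M_h^*C_\varphi-C_\varphi M_{h\circ\varphi}^*\in\mathcal K(\mathcal D_\alpha).
\]

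To prove this relation, take adjoints: it is equivalent to $C_\varphi^*M_h-M_{h\circ\varphi}C_\varphi^*$ being compact. Substitute Lemma~\ref{lem4.1} for $C_\varphi^*$ as well, writing $C_\varphi^*\equiv M_{g_\varphi}C_{\sigma}M_{h_\varphi}^*$ modulo compacts, where $\sigma$ is the Krein adjoint of $\varphi$. By Lemma~\ref{lem4.2}, $M_{h_\varphi}^*$ commutes with $M_h$ modulo compacts. The identity $C_\sigma M_h=M_{h\circ\sigma}C_\sigma$ then gives
\[
\pi(C_\varphi^*M_h)=\pi(M_{h\circ\sigma})\,\pi(C_\varphi^*).
\]
So the relation reduces to checking that $\pi\bigl(M_{h\circ\sigma}-M_{h\circ\varphi}\bigr)\pi(C_\varphi^*)=0$. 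This is the main obstacle, because $h\circ\sigma\neq h\circ\varphi$ in general.

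To handle it, write $h\circ\sigma-h\circ\varphi=w$. The key idea is that multiplication by a function analytic on a neighborhood of $\overline{\mathbb D}$ is compact modulo the essential behavior of $C_\varphi^*$ at the boundary contact points. The relevant geometric fact is $\sigma(\zeta)=\varphi^{-1}(\zeta)$ whenever $\zeta\in\mathbb T$ and $\varphi^{-1}(\zeta)\in\mathbb T$, by the $\delta$-conjugation identity (3.3). As a preliminary step I would show that $M_wC_\varphi$ is compact when $w$ vanishes on the set $\varphi^{-1}(\mathbb T)\cap\mathbb T$. This holds because $C_\varphi$ is non-compact only through that finite set; equivalently, via Proposition~\ref{prop2.1}, $M_wD_\varphi$ is compact by Theorem~\ref{thm2.2}-type or Carleson estimates, since $w$ kills the single contact point. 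Applying this with $w=h\circ\sigma\circ\varphi-h$, then taking adjoints and using Lemma~\ref{lem4.2} again, gives the relation.

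Alternatively, a cleaner route is to commute via $C_\varphi^*$ directly. Use
\[
M_h^*C_\varphi=(C_\varphi^*M_h)^*,
\]
expand $C_\varphi^*$ by Lemma~\ref{lem4.1}, and compare both sides on normalized reproducing kernels. This reduces the relation to the pointwise statement that the difference of the two sides vanishes at the boundary contact point. If the contact-point argument becomes messy, I would fall back on this route.

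With the relation in hand, we get
\[
\pi(C_\psi^*C_\varphi)=\pi(M_g)\,\pi(C_{\varphi\circ\tau})\,\pi(M_{h\circ\varphi})^*.
\]
Since $g$ and $h\circ\varphi$ and their reciprocals are analytic and zero-free near $\overline{\mathbb D}$, the operators $\pi(M_g)$ and $\pi(M_{h\circ\varphi})^*$ are invertible, with inverses $\pi(M_{1/g})$ and $\pi(M_{1/(h\circ\varphi)})^*$. It follows that $\pi(C_\psi^*C_\varphi)=0$ if and only if $\pi(C_{\varphi\circ\tau})=0$, which is the claimed equivalence.
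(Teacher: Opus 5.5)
\textbf{There is a genuine gap.} The relation your whole argument rests on,
\[
M_h^*C_\varphi-C_\varphi M_{h\circ\varphi}^*\in\mathcal K(\mathcal D_\alpha),
\]
is false in general. So the step you call ``the main obstacle'' cannot be overcome: $\pi(M_{h\circ\sigma}-M_{h\circ\varphi})\pi(C_\varphi^*)=0$ is simply not true.

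Concretely, take $\alpha=1$, $\psi(z)=z/(2-z)$ (so $h=h_\psi=2-z$) and $\varphi(z)=iz$. Use $e_n=z^n/\beta_n$ and $M_z^*e_n=\omega_{n-1}e_{n-1}$ from the proof of Lemma~\ref{lem4.2}. A direct computation gives
\[
\bigl(M_h^*C_\varphi-C_\varphi M_{h\circ\varphi}^*\bigr)e_n=-2i^n\omega_{n-1}e_{n-1}\qquad(n\geqslant1).
\]
Since $\omega_n\to1$, this operator is not compact.

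The structural reason is as follows. If $M_h^*C_\varphi\equiv C_\varphi M_k^*$ modulo compacts, then $k(\varphi(\xi))=h(\xi)$ must hold at every contact point $\xi$. This is because $C_\varphi$ carries boundary behaviour at $\zeta=\varphi(\xi)$ to $\xi$. By (3.3), $\sigma(\zeta)=\xi$, so $k=h\circ\sigma$ is the correct intertwiner. The choice $k=h\circ\varphi$ would instead require $h(\varphi(\zeta))=h(\xi)$, which fails generically. For instance, $\varphi(z)=(1+iz)/2$ has $\xi=-i$, $\zeta=1$ and $\varphi(\zeta)=(1+i)/2$.

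Consequently neither of your fallback routes can work. Your contact-point step, with $w=h\circ\sigma\circ\varphi-h$, only yields $M_hC_\varphi\equiv C_\varphi M_{h\circ\sigma}$, which is a different relation. Your kernel comparison would reduce to the false identity $h(\varphi(\zeta))=h(\xi)$, and testing on normalized kernels does not certify compactness anyway.

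The repair is already in your second paragraph. The computation $\pi(C_\varphi^*M_h)=\pi(M_{h\circ\sigma})\pi(C_\varphi^*)$ is correct; it uses Lemma~\ref{lem4.1} for $\varphi$, Lemma~\ref{lem4.2}, and $C_\sigma M_h=M_{h\circ\sigma}C_\sigma$. Its adjoint is $\pi(M_h)^*\pi(C_\varphi)=\pi(C_\varphi)\pi(M_{h\circ\sigma})^*$. Hence
\[
\pi(C_\psi^*C_\varphi)=\pi(M_g)\,\pi(C_{\varphi\circ\tau})\,\pi(M_{h\circ\sigma})^*.
\]
Since $\sigma$ maps a neighborhood of $\overline{\mathbb D}$ into the region where $h$ is analytic and zero-free, $M_{h\circ\sigma}$ is invertible. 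Your final invertibility step then goes through verbatim with $h\circ\sigma$ in place of $h\circ\varphi$.

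With this correction your route is genuinely shorter than the paper's. The paper also applies Lemma~\ref{lem4.1} to both $\psi$ and $\varphi$ (the latter in the adjoint form $C_\varphi\equiv M_HC_\sigma^*M_G^*$) and commutes once via Lemma~\ref{lem4.2}. It arrives at $C_\tau C_\sigma^*$ between invertible multipliers. It must then pass through the derivative model, the exact Cowen--Hurst formula on $A_\alpha^2$, and the involution $\widehat\sigma=\varphi$ to convert $D_\tau D_\sigma^*$ into $D_{\varphi\circ\tau}$. You use Lemma~\ref{lem4.1} for $\varphi$ only to obtain the intertwining relation, and then combine $C_\tau C_\varphi=C_{\varphi\circ\tau}$ directly in the Calkin algebra of $\mathcal D_\alpha$.
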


\begin{proof}
Let $\sigma$ be the Krein adjoint of $\varphi$. By the
adjoint formula on $\mathcal D_\alpha$ and taking adjoints, we have
\[
C_\psi^*
\equiv
M_gC_\tau M_h^*,
\qquad
C_\varphi
\equiv
M_HC_\sigma^*M_G^*
\pmod{\mathcal K},
\]
where the multiplier symbols are those in
Lemma~\ref{lem4.1}. Hence
\[
C_\psi^*C_\varphi
\equiv
M_gC_\tau M_h^*M_HC_\sigma^*M_G^*
\pmod{\mathcal K}.
\]
By Lemma~\ref{lem4.2},
\[
M_h^*M_H\equiv M_HM_h^*
\pmod{\mathcal K},
\]
and therefore
\[
\begin{aligned}
C_\psi^*C_\varphi
&\equiv
M_gC_\tau M_HM_h^*C_\sigma^*M_G^*
\\
&=
M_{g(H\circ\tau)}
C_\tau C_\sigma^*
M_{h\circ\sigma}^*M_G^*
\pmod{\mathcal K}.
\end{aligned}
\]
Since all multiplier factors are bounded and invertible, it follows that
\[
C_\psi^*C_\varphi\in\mathcal K(\mathcal D_\alpha)
\quad\Longleftrightarrow\quad
C_\tau C_\sigma^*
\in\mathcal K(\mathcal D_\alpha).
\]

It remains to reduce the latter compactness question to a single composition operator. By the finite rank block decomposition and the unitary equivalence induced by the differentiation map $U_\alpha f=f'$ in Proposition~\ref{prop2.1}, we have
\[
C_\tau C_\sigma^*\in\mathcal K(\mathcal D_\alpha)
\quad\Longleftrightarrow\quad
D_\tau D_\sigma^*\in\mathcal K(A_\alpha^2).
\]

We now derive a factorization for $D_\tau D_\sigma^*$ on
$A_\alpha^2$. Since
\[
D_\sigma=M_{\sigma'}C_\sigma,
\]
we have
\[
D_\sigma^*
=
C_\sigma^*M_{\sigma'}^*.
\]
Applying Theorem \ref{thm2.4} to the linear fractional
self-map $\sigma$ gives
\[
C_\sigma^*
=
M_{\widetilde g}
C_{\widehat\sigma}
M_{\widetilde h}^*,
\]
where $\widehat\sigma$ denotes the Krein adjoint of $\sigma$.
Therefore,
\[
D_\sigma^*
=
M_{\widetilde g}
C_{\widehat\sigma}
M_{\widetilde h}^*M_{\sigma'}^*
=
M_{\widetilde g}
C_{\widehat\sigma}
M_{\widetilde v}^*,
\]
where
\[
\widetilde v=\sigma'\widetilde h .
\]

Since
\[
D_\tau=M_{\tau'}C_\tau,
\]
we obtain
\[
\begin{aligned}
D_\tau D_\sigma^*
&=
M_{\tau'}C_\tau
M_{\widetilde g}
C_{\widehat\sigma}
M_{\widetilde v}^*
=
M_{\tau'(\widetilde g\circ\tau)}
C_{\widehat\sigma\circ\tau}
M_{\widetilde v}^*,
\end{aligned}
\]
where we used the identity
\[
C_\tau M_{\widetilde g}
=
M_{\widetilde g\circ\tau}C_\tau .
\]

On the other hand, we have
\[
D_{\widehat\sigma\circ\tau}
=
M_{(\widehat\sigma'\circ\tau)\tau'}
C_{\widehat\sigma\circ\tau}.
\]
Define
\[
\widetilde u
=
\frac{\widetilde g\circ\tau}
{\widehat\sigma'\circ\tau}.
\]
Then we obtain that
\[
D_\tau D_\sigma^*
=
M_{\widetilde u}
D_{\widehat\sigma\circ\tau}
M_{\widetilde v}^* .
\]
The multiplication operators
$M_{\widetilde u}$ and $M_{\widetilde v}$ are bounded and invertible
on $A_\alpha^2$. Therefore,
\[
D_\tau D_\sigma^*
\in\mathcal K(A_\alpha^2)
\quad\Longleftrightarrow\quad
D_{\widehat\sigma\circ\tau}
\in\mathcal K(A_\alpha^2).
\]

Finally, the Krein adjoint operation is involutive. Since $\sigma$ is
the Krein adjoint of $\varphi$, we have
\[
\widehat\sigma=\varphi .
\]
Consequently,
\[
D_\tau D_\sigma^*
\in\mathcal K(A_\alpha^2)
\quad\Longleftrightarrow\quad
D_{\varphi\circ\tau}
\in\mathcal K(A_\alpha^2).
\]
Applying Proposition~\ref{prop2.1} once more yields
\[
D_{\varphi\circ\tau}
\in\mathcal K(A_\alpha^2)
\quad\Longleftrightarrow\quad
C_{\varphi\circ\tau}
\in\mathcal K(\mathcal D_\alpha).
\]
Combining the above equivalences proves the lemma.
\end{proof}

For later use,  we record the compactness criterion for a single linear
fractional symbol.

\begin{lemma}\label{lem4.4}
Let $\alpha>-1$, and let $\eta$ be a nonconstant linear fractional
self-map of $\mathbb D$. Then
\[
C_\eta\in\mathcal K(\mathcal D_\alpha)
\quad\Longleftrightarrow\quad
\overline{\eta(\mathbb D)}\subset\mathbb D.
\]
\end{lemma}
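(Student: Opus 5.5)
Sufficiency is immediate from Proposition~\ref{prop3.2}: if $\overline{\eta(\mathbb D)}\subset\mathbb D$, then $C_\eta\in S_q(\mathcal D_\alpha)$ for every $q>0$. Since Schatten class operators are compact, $C_\eta\in\mathcal K(\mathcal D_\alpha)$. The work therefore lies in necessity.

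For necessity, I will first pass to the derivative model. By the block representation in Proposition~\ref{prop2.1}, $C_\eta$ differs from $0\oplus\widetilde C_\eta$ by a finite rank operator, and $\widetilde C_\eta$ is unitarily equivalent to $D_\eta$. Hence $C_\eta$ is compact exactly when $D_\eta$ is compact on $A_\alpha^2$; equivalently, $\|D_\eta\|_{e,A_\alpha^2}=\|C_\eta\|_{e,\mathcal D_\alpha}$.

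Suppose now that $\overline{\eta(\mathbb D)}\cap\mathbb T\neq\varnothing$, and choose $\xi,\zeta\in\mathbb T$ with $\eta(\xi)=\zeta$. I will test $D_\eta^*$ on normalized reproducing kernels $k_w^\alpha$ with $w=r\zeta$ and $r\to1^-$. These kernels tend to $0$ weakly, so compactness of $D_\eta$, hence of $D_\eta^*$, would force $\|D_\eta^*k_w^\alpha\|\to0$.

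To get a lower bound, I will pair with a second normalized kernel: for any $x\in\mathbb D$,
\[
\langle D_\eta^*K_w^\alpha,K_x^\alpha\rangle=\overline{(D_\eta K_x^\alpha)(w)}\cdot(\text{conj.}),
\]
that is, $\bigl|\langle D_\eta^* k_w^\alpha,k_x^\alpha\rangle\bigr|=|\eta'(x)|\,|K_w^\alpha(\eta(x))|/(\|K_w^\alpha\|\|K_x^\alpha\|)$. I will take $x=x_r$ on the radius toward $\xi$ chosen so that $\eta(x_r)$ is comparable to $w$. By the local inverse expansion used in the proof of Proposition~\ref{prop3.2}, the choice $x_r=\xi(1-(1-r)/\lambda)$ gives $1-|x_r|\asymp 1-r$ and $|1-\overline w\eta(x_r)|\asymp 1-r$. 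Since $\|K_z^\alpha\|^2=(1-|z|^2)^{-\alpha-2}$ and $|\eta'(\xi)|=\lambda>0$, the quotient is bounded below by a positive constant independent of $r$. This gives $\|D_\eta^*k_w^\alpha\|\gtrsim1$, a contradiction.

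The main obstacle is verifying the two comparability estimates uniformly as $r\to1^-$. I expect to handle both with the first-order Taylor expansion of $\eta$ at $\xi$ together with the Julia--Carath\'eodory identity $\xi\eta'(\xi)\overline\zeta=\lambda$.

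An alternative route to necessity avoids kernels entirely: compactness of $C_\eta$ is equivalent to vanishing essential norm, and one could argue via the counting-function criterion instead. However, Theorem~\ref{thm2.2} characterizes Schatten membership rather than compactness, so the kernel test above is the more self-contained approach.
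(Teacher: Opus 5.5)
Your route differs from the paper's. The paper also passes to $D_\eta$ on $A_\alpha^2$. It then strips off the bounded invertible factor $M_{\eta'}$ and quotes the MacCluer--Shapiro angular-derivative criterion for compactness of $C_\eta$ on $A_\alpha^2$, which gives both directions at once. You instead take sufficiency from Proposition~\ref{prop3.2} and prove necessity by a self-contained reproducing-kernel test. There is no circularity, since that proposition does not use this lemma, and a normal-families argument would also suffice. Your version avoids the external compactness theorem at the cost of a local Taylor computation; the paper's is shorter.

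There is a slip in the kernel test as written. By the definition of the adjoint,
\[
\langle D_\eta^*K_w^\alpha,K_x^\alpha\rangle
=\langle K_w^\alpha,D_\eta K_x^\alpha\rangle
=\overline{\eta'(w)K_x^\alpha(\eta(w))},
\]
or equivalently $D_\eta^*K_w^\alpha=\overline{\eta'(w)}K_{\eta(w)}^\alpha$. The quantity $|\eta'(x)|\,|K_w^\alpha(\eta(x))|$ that you actually estimate is $|\langle D_\eta K_w^\alpha,K_x^\alpha\rangle|$.

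Your stated claim $\|D_\eta^*k_w^\alpha\|\gtrsim1$ for $w=r\zeta$ is false in general. Indeed,
\[
\|D_\eta^*k_w^\alpha\|^2=|\eta'(w)|^2\bigl((1-|w|^2)/(1-|\eta(w)|^2)\bigr)^{\alpha+2},
\]
which tends to $0$ whenever $\eta(\zeta)\in\mathbb D$. For example, take $\eta(z)=-(1+z)/2$, where $\xi=1$, $\zeta=-1$ and $\eta(\zeta)=0$.

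The repair is immediate, and your local estimates are correct. They give $\eta(x_r)=r\zeta+O((1-r)^2)$, hence $|1-\overline w\eta(x_r)|\asymp1-r$, and therefore
\[
\|D_\eta k_{r\zeta}^\alpha\|\geqslant|\langle D_\eta k_{r\zeta}^\alpha,k_{x_r}^\alpha\rangle|\gtrsim1 .
\]
This contradicts compactness of $D_\eta$ directly, since $k_{r\zeta}^\alpha\to0$ weakly. Equivalently, it bounds $\|D_\eta^*k_{x_r}^\alpha\|$ from below.

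Simpler still, use $D_\eta^*K_w^\alpha=\overline{\eta'(w)}K_{\eta(w)}^\alpha$ with $w=r\xi$, so no second kernel is needed. Julia--Carath\'eodory gives $\|D_\eta^*k_{r\xi}^\alpha\|^2\to\lambda^{-\alpha}>0$. This also yields the quantitative bound $\|C_\eta\|_{e,\mathcal D_\alpha}^2\geqslant|\eta'(\xi)|^{-\alpha}$, consistent with Lemma~\ref{lem5.4}.
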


\begin{proof}
By the finite rank block decomposition and
Proposition~\ref{prop2.1},
\[
C_\eta\in\mathcal K(\mathcal D_\alpha)
\quad\Longleftrightarrow\quad
D_\eta\in\mathcal K(A_\alpha^2),
\]
where
\[
D_\eta=M_{\eta'}C_\eta.
\]
As noted above, $\eta'$ and $1/\eta'$ are bounded analytic functions
on a neighborhood of $\overline{\mathbb D}$. Hence $M_{\eta'}$ is
bounded and invertible on $A_\alpha^2$, and therefore
\[
D_\eta\in\mathcal K(A_\alpha^2)
\quad\Longleftrightarrow\quad
C_\eta\in\mathcal K(A_\alpha^2).
\]
Consequently,
\[
C_\eta\in\mathcal K(\mathcal D_\alpha)
\quad\Longleftrightarrow\quad
C_\eta\in\mathcal K(A_\alpha^2).
\]

By
\cite[Theorem 3.5]{MacCluerShapiro1986}, $C_\eta$ is compact on $A_\alpha^2$ if
and only if $\eta$ has no finite angular derivative at any point of
$\mathbb T$.

For a nonconstant linear fractional self-map, this is equivalent to
\[
\overline{\eta(\mathbb D)}\subset\mathbb D.
\]
Indeed, if $\overline{\eta(\mathbb D)}$ meets $\mathbb T$, then there
exists $\zeta\in\mathbb T$ such that $\eta(\zeta)\in\mathbb T$.
Since $\eta$ extends analytically across $\mathbb T$ and
$\eta'(\zeta)\neq0$, it has a finite angular derivative at $\zeta$.
Conversely, if $\eta$ has a finite angular derivative at some
$\zeta\in\mathbb T$, then its angular limit belongs to $\mathbb T$,
and hence $\overline{\eta(\mathbb D)}$ meets $\mathbb T$. Therefore,
\[
C_\eta\in\mathcal K(\mathcal D_\alpha)
\quad\Longleftrightarrow\quad
\overline{\eta(\mathbb D)}\subset\mathbb D,
\]
to finish the proof.
\end{proof}

The geometric condition for the reverse product is expressed in terms of
common boundary contact in the domain.

\begin{lemma}\label{lem4.5}
Let $\varphi$ and $\psi$ be nonconstant linear fractional self-maps of
$\mathbb D$, and let $\tau$ be the Krein adjoint of $\psi$.  Then
\[
\overline{(\varphi\circ\tau)(\mathbb D)}
\subset\mathbb D
\]
if and only if
\[
\varphi^{-1}(\mathbb T)
\cap
\psi^{-1}(\mathbb T)
\cap
\mathbb T
=
\varnothing.
\]
\end{lemma}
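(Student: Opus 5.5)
We mirror the proof of Lemma~\ref{lem3.3}, using the identities (3.3): with $\delta(z)=1/z$ and $\widetilde\psi(z)=\overline{\psi(\overline z)}$ we have $\tau=\delta\circ\widetilde\psi^{-1}\circ\delta$ and $\tau^{-1}=\delta\circ\widetilde\psi\circ\delta$. Every map involved is a M\"obius transformation of $\widehat{\mathbb C}$. The computation in Lemma~\ref{lem3.3} shows $\tau(\mathbb D)\subset\mathbb D$, so $\tau$ is a self-map of $\mathbb D$ and extends continuously (indeed analytically) to $\overline{\mathbb D}$. The same holds for $\varphi\circ\tau$. Hence $\overline{(\varphi\circ\tau)(\mathbb D)}=(\varphi\circ\tau)(\overline{\mathbb D})$, and the condition $\overline{(\varphi\circ\tau)(\mathbb D)}\not\subset\mathbb D$ is equivalent to the existence of $\xi\in\mathbb T$ with $|\varphi(\tau(\xi))|=1$.

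Our key observation concerns the boundary action of $\tau$. For $\lambda\in\mathbb T$ we have $\delta(\lambda)=\overline\lambda$ and, more generally, $\delta(w)=1/w$, so
\[
\tau^{-1}(\lambda)=\frac{1}{\overline{\psi(\lambda)}}.
\]
Two consequences follow. First, if $\tau(\xi)=\lambda\in\mathbb T$ for some $\xi\in\mathbb T$, then $\xi=1/\overline{\psi(\lambda)}$. Since $|\psi(\lambda)|\leqslant1$ and $|\xi|=1$, this forces $|\psi(\lambda)|=1$ and $\xi=\psi(\lambda)$. Conversely, if $\lambda\in\mathbb T$ and $|\psi(\lambda)|=1$, then $\tau(\psi(\lambda))=\lambda$, with $\psi(\lambda)\in\mathbb T$.

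\emph{Necessity.} Suppose $|\varphi(\tau(\xi))|=1$ for some $\xi\in\mathbb T$, and put $\lambda=\tau(\xi)\in\overline{\mathbb D}$. If $\lambda\in\mathbb D$, then $\varphi(\lambda)\in\mathbb D$, which is impossible; so $\lambda\in\mathbb T$. The first consequence above then gives $|\psi(\lambda)|=1$. Since also $|\varphi(\lambda)|=1$, the point $\lambda$ lies in $\varphi^{-1}(\mathbb T)\cap\psi^{-1}(\mathbb T)\cap\mathbb T$.

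\emph{Sufficiency.} Suppose $\lambda\in\mathbb T$ satisfies $|\varphi(\lambda)|=|\psi(\lambda)|=1$. Set $\xi=\psi(\lambda)\in\mathbb T$. By the second consequence, $(\varphi\circ\tau)(\xi)=\varphi(\lambda)\in\mathbb T$, so $\overline{(\varphi\circ\tau)(\mathbb D)}$ meets $\mathbb T$.

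Combining the two directions proves the equivalence. The only delicate point is the step $\lambda\in\mathbb T$ in the necessity direction. It relies on knowing that $\tau$ maps $\overline{\mathbb D}$ into $\overline{\mathbb D}$, which holds by continuity from $\tau(\mathbb D)\subset\mathbb D$, and that $\varphi$ maps $\mathbb D$ into $\mathbb D$. The identity for $\tau^{-1}$ on $\mathbb T$ is the same computation already carried out in Lemma~\ref{lem3.3}, so no new calculation is required.
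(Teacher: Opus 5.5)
Your proposal is correct and follows essentially the same argument as the paper: both use (3.3) to get $\tau^{-1}(\lambda)=1/\overline{\psi(\lambda)}$ for $\lambda\in\mathbb T$, and both force $\lambda=\tau(\xi)$ onto $\mathbb T$ via $\varphi(\mathbb D)\subset\mathbb D$. For the converse, both take $\xi=\psi(\lambda)$, so that $\tau(\xi)=\lambda$ and $(\varphi\circ\tau)(\xi)=\varphi(\lambda)\in\mathbb T$.
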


\begin{proof}
By \textup{(3.3)},
\[
\tau
=
\delta\circ\widetilde\psi^{-1}\circ\delta.
\]
Hence, we have
\[
\begin{aligned}
(\varphi\circ\tau)(\mathbb D)
=
(\varphi\circ\delta\circ\widetilde\psi^{-1}\circ\delta)(\mathbb D)
=
(\varphi\circ\delta\circ\widetilde\psi^{-1})
\bigl(\widehat{\mathbb C}\backslash\overline{\mathbb D}\bigr)
\subset
(\varphi\circ\delta)
\bigl(\widehat{\mathbb C}\backslash\overline{\mathbb D}\bigr)
=
\varphi(\mathbb D)
\subset
\mathbb D.
\end{aligned}
\]

Assume first that
\[
\overline{(\varphi\circ\tau)(\mathbb D)}
\not\subset\mathbb D.
\]
Since $\varphi\circ\tau$ is a linear fractional self-map and extends
continuously to $\overline{\mathbb D}$, there exist
$\xi,\zeta\in\mathbb T$ such that
\[
(\varphi\circ\tau)(\xi)=\zeta.
\]
Put
\[
\lambda=\tau(\xi).
\]
Then
\[
\varphi(\lambda)=\zeta\in\mathbb T.
\]
Since $\lambda\in\overline{\mathbb D}$ and
$\varphi(\mathbb D)\subset\mathbb D$, necessarily
\[
\lambda\in\mathbb T.
\]
Moreover, by \textup{(3.3)},
\[
\xi
=
\tau^{-1}(\lambda)
=
\frac{1}{\overline{\psi(\lambda)}}.
\]
Since
\[
|\xi|=1
\qquad\text{and}\qquad
|\psi(\lambda)|\leqslant1,
\]
we obtain
\[
|\psi(\lambda)|=1.
\]
Hence
\[
\lambda
\in
\varphi^{-1}(\mathbb T)
\cap
\psi^{-1}(\mathbb T)
\cap
\mathbb T.
\]
Thus
\[
\varphi^{-1}(\mathbb T)
\cap
\psi^{-1}(\mathbb T)
\cap
\mathbb T
\neq\varnothing.
\]

Conversely, suppose that there exists $\lambda\in\mathbb T$ such that
\[
\varphi(\lambda)\in\mathbb T,
\qquad
\psi(\lambda)\in\mathbb T.
\]
Put
\[
\xi=\psi(\lambda)\in\mathbb T.
\]
Since $|\psi(\lambda)|=1$, \textup{(3.3)} gives
\[
\tau^{-1}(\lambda)
=
\frac{1}{\overline{\psi(\lambda)}}
=
\psi(\lambda)
=
\xi.
\]
Therefore
\[
\tau(\xi)=\lambda,
\]
and hence
\[
(\varphi\circ\tau)(\xi)
=
\varphi(\lambda)
\in\mathbb T.
\]
It follows that
\[
\overline{(\varphi\circ\tau)(\mathbb D)}
\not\subset\mathbb D.
\]
The proof is complete.
\end{proof}

Combining the preceding three lemmas gives the compactness characterization.

\begin{proposition}\label{prop4.6}
Let $\alpha>-1$, and let $\varphi$ and $\psi$ be nonconstant linear
fractional self-maps of $\mathbb D$.  Then
\[
C_\psi^*C_\varphi
\in\mathcal K(\mathcal D_\alpha)
\]
if and only if
\[
\varphi^{-1}(\mathbb T)
\cap
\psi^{-1}(\mathbb T)
\cap
\mathbb T
=
\varnothing.
\]
\end{proposition}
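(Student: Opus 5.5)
The plan is to chain the three preceding lemmas, exactly as the sentence before the statement suggests. Let $\tau$ denote the Krein adjoint of $\psi$.

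First, Lemma~\ref{lem4.3} gives
\[
C_\psi^*C_\varphi\in\mathcal K(\mathcal D_\alpha)
\quad\Longleftrightarrow\quad
C_{\varphi\circ\tau}\in\mathcal K(\mathcal D_\alpha).
\]

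Second, we apply Lemma~\ref{lem4.4} with $\eta=\varphi\circ\tau$. This requires checking that $\varphi\circ\tau$ is a nonconstant linear fractional self-map of $\mathbb D$. It is linear fractional because it is a composition of two Möbius maps, and it is nonconstant because both factors are invertible on $\widehat{\mathbb C}$. It is a self-map of $\mathbb D$ by the inclusion $(\varphi\circ\tau)(\mathbb D)\subset\varphi(\mathbb D)\subset\mathbb D$, which was established at the start of the proof of Lemma~\ref{lem4.5}; this in particular uses $\tau(\mathbb D)\subset\mathbb D$. Lemma~\ref{lem4.4} then gives
\[
C_{\varphi\circ\tau}\in\mathcal K(\mathcal D_\alpha)
\quad\Longleftrightarrow\quad
\overline{(\varphi\circ\tau)(\mathbb D)}\subset\mathbb D.
\]

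Third, Lemma~\ref{lem4.5} translates this last condition into
\[
\varphi^{-1}(\mathbb T)\cap\psi^{-1}(\mathbb T)\cap\mathbb T=\varnothing.
\]
Composing the three equivalences proves the proposition.

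The only point needing care is the applicability of Lemma~\ref{lem4.4}, namely that $\varphi\circ\tau$ is a genuine nonconstant linear fractional self-map. This is settled by the inclusion above. All the substantive analysis, the Cowen-type adjoint formula modulo compacts and the commutator lemma, is already contained in Lemmas~\ref{lem4.1}--\ref{lem4.3}, so I expect no real obstacle here.
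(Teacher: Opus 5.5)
Your proposal is correct and is exactly the paper's argument: Lemma~\ref{lem4.3} reduces compactness of $C_\psi^*C_\varphi$ to that of $C_{\varphi\circ\tau}$, and Lemmas~\ref{lem4.4} and \ref{lem4.5} then convert this into the boundary condition. You also check explicitly that $\varphi\circ\tau$ is a nonconstant linear fractional self-map of $\mathbb D$, so that Lemma~\ref{lem4.4} applies; the paper leaves this implicit, and your check is correct.
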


\begin{proof}
By Lemma~\ref{lem4.3}, we have
\[
C_\psi^*C_\varphi
\in\mathcal K(\mathcal D_\alpha)
\quad\Longleftrightarrow\quad
C_{\varphi\circ\tau}
\in\mathcal K(\mathcal D_\alpha).
\]
Then using Lemmas \ref{lem4.4} and
\ref{lem4.5}, we  obtain the desired conclusion.
\end{proof}

To show that the above boundary condition also yields Schatten $p$-class membership, we need the finite rank approximations below.

\begin{lemma}\label{lem4.7}
Let $\alpha>-1$, and let $\varphi$ and $\psi$ be nonconstant linear
fractional self-maps of $\mathbb D$.  Assume that
\[
\varphi^{-1}(\mathbb T)
\cap
\psi^{-1}(\mathbb T)
\cap
\mathbb T
=
\varnothing.
\]
Let
\[
T=C_\psi^*C_\varphi.
\]
Then there exist constants $C>0$ and $0<\rho<1$ and finite rank
operators $F_N$ such that
\[
\operatorname{rank}(F_N)\leqslant2(N+1)
\]
and
\[
\|T-F_N\|\leqslant C\rho^N,
\qquad N\geqslant0.
\]
\end{lemma}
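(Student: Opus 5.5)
Since $\varphi^{-1}(\mathbb T)\cap\psi^{-1}(\mathbb T)\cap\mathbb T=\varnothing$ and each of $\varphi^{-1}(\mathbb T)\cap\mathbb T$, $\psi^{-1}(\mathbb T)\cap\mathbb T$ contains at most one point for a nonconstant linear fractional self-map, I will choose a partition of unity on $\mathbb T$ adapted to this separation. Concretely, I pick two closed arcs $I_1,I_2$ covering $\mathbb T$ with $I_1$ a neighbourhood of the (possible) contact point of $\varphi$ and bounded away from the contact point of $\psi$, and $I_2$ the reverse. On a slightly smaller annulus $\{r_0<|z|<1\}$ this gives sectors $S_1,S_2$ with
\[
\sup_{z\in S_1}|\psi(z)|\leqslant\rho_1<1
\qquad\text{and}\qquad
\sup_{z\in S_2}|\varphi(z)|\leqslant\rho_1<1.
\]
This is justified because each of $\varphi,\psi$ extends continuously to $\overline{\mathbb D}$ and has modulus $<1$ on $\overline{\mathbb D}$ minus its contact point.

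I will then work in the derivative model. By Proposition~\ref{prop2.1}, up to finite rank blocks, $T$ corresponds to $D_\psi^*D_\varphi$ on $A_\alpha^2$. The finite rank blocks contribute at most rank $2$, which can be absorbed by enlarging $F_N$ at fixed $N$ and adjusting $C$. For $f,g\in A_\alpha^2$ I write
\[
\langle D_\psi^*D_\varphi f,g\rangle
=\int_{\mathbb D}f(\varphi)\varphi'\,\overline{g(\psi)\psi'}\,dA_\alpha
\]
and split $\mathbb D=\{|z|\leqslant r_0\}\cup S_1\cup S_2$. This expresses $D_\psi^*D_\varphi$ as a sum of three operators $A_j^*B_j$. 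Here $B_jf=(f\circ\varphi)\varphi'\chi_{E_j}$ and $A_jg=(g\circ\psi)\psi'\chi_{E_j}$ map $A_\alpha^2\to L^2(E_j,dA_\alpha)$, and on each piece at least one of them only samples its argument on a disk $\rho_1\mathbb D$ with $\rho_1<1$. For example, on $S_1$ the map $g\mapsto (g\circ\psi)\psi'\chi_{S_1}$ only sees $g|_{\rho_1\mathbb D}$; the same holds on the compact piece, where both maps see only a smaller disk.

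The key estimate is that restriction-type operators depending only on $g|_{\rho_1\mathbb D}$ admit exponentially good finite rank approximations. I expand $g=\sum_n \widehat g(n) z^n$ in the orthogonal monomial basis and let $P_N$ be the projection onto $\operatorname{span}\{1,\dots,z^N\}$. Since
\[
\sup_{|w|\leqslant\rho_1}\bigl|(g-P_Ng)(w)\bigr|\lesssim \|g\|_{A_\alpha^2}\sum_{n>N}n^{(1+\alpha)/2}\rho_1^n\lesssim \rho^N\|g\|
\]
for any $\rho\in(\rho_1,1)$, I get $\|A_1(I-P_N)\|\lesssim\rho^N$, using that $\psi'$ is bounded. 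Hence $A_1^*B_1=P_NA_1^*B_1+O(\rho^N)$, and $P_NA_1^*B_1$ has rank at most $N+1$. Symmetrically, on $S_2$ I write $A_2^*B_2=A_2^*B_2P_N+O(\rho^N)$, again of rank at most $N+1$. The compact piece is handled either way and merged into one of these, for instance by including $\{|z|\leqslant r_0\}$ in $S_1$, since $|\psi|\leqslant\max(r_0,\rho_1)$ there after enlarging $\rho_1$. Thus $F_N=P_NA_1^*B_1+A_2^*B_2P_N$ has rank at most $2(N+1)$.

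The final step is transporting back to $\mathcal D_\alpha$ via $U_\alpha$ and the block decomposition, with the extra rank absorbed as noted. The main obstacle I expect is the geometric construction of the sectors: I must make sure that the closure of each region stays away from the relevant contact point, including in the degenerate cases where only one of $\varphi,\psi$ touches $\mathbb T$ or neither does. Those cases are easier, since a single region suffices, but they need to be stated.
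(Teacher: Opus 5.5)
Your proposal is correct. It rests on the same two facts as the paper's proof. First, at each point of $\mathbb D$ one of $|\varphi|,|\psi|$ is bounded by a fixed number $r<1$. Second, on $|w|\leqslant r$ the Taylor tail beyond degree $N$ of a unit vector is uniformly $O(\rho^N)$.

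The packaging differs. The paper stays in $\mathcal D_\alpha$, takes $F_N=TP_N+P_NTQ_N$ so that $T-F_N=Q_NTQ_N$ exactly, and bounds $\langle C_\varphi Q_Nf,C_\psi Q_Ng\rangle$ by splitting the integral. This requires sup bounds for both $Q_Nf$ and $(Q_Nf)'$ and a separate treatment of the point-evaluation term at $0$, but no finite-rank bookkeeping. You pass to $A_\alpha^2$, split the operator itself as $D_\psi^*D_\varphi=A_1^*B_1+A_2^*B_2$, and approximate each piece by a one-sided projection. You then need only sup bounds for $g-P_Ng$, and the two rank-$(N+1)$ summands explain the bound $2(N+1)$ directly. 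The price is the correction coming from $\mathcal D_\alpha=\mathbb C\oplus\mathcal D_{\alpha,0}$.

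Three small repairs are needed.

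\textbf{Rank bookkeeping.} The bound $\operatorname{rank}(F_N)\leqslant 2(N+1)$ is part of the statement, so you cannot enlarge $F_N$ at fixed $N$; you must shift the index. Let $G_M$ be your approximant, with rank at most $2(M+1)+2$ and error at most $C\rho^M$. Put $F_N=G_{N-1}$ for $N\geqslant1$ and $F_0=0$, and replace $C$ by $\max\{C\rho^{-1},\|T\|\}$. In fact the correction has rank one: it is $f\mapsto f(\varphi(0))$ times the $\mathcal D_\alpha$-kernel at $\psi(0)$.

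\textbf{Automorphisms.} Your claim that $\varphi^{-1}(\mathbb T)\cap\mathbb T$ has at most one point fails when $\varphi$ is an automorphism, since then it is all of $\mathbb T$. In that case the hypothesis forces $\max_{\overline{\mathbb D}}|\psi|<1$, and a single region suffices.

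\textbf{The sectors.} The construction you single out as the main obstacle can be avoided entirely, as in the paper. By continuity, compactness, the maximum principle and the hypothesis, $m=\max_{\overline{\mathbb D}}\min\{|\varphi|,|\psi|\}<1$. For any $m<r<1$, take the disjoint sets $E_2=\{|\varphi|\leqslant r\}$ and $E_1=\mathbb D\setminus E_2$; on $E_1$ one has $|\psi|\leqslant m$. This handles every case at once. With your two overlapping closed arcs you would in any case have to make the sectors disjoint, or use square roots of a genuine partition of unity, to avoid double counting.
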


\begin{proof}
Let
\[
\beta_0=1,
\qquad
\beta_n=\|z^n\|_{\mathcal D_\alpha}
\quad (n\geqslant1),
\]
and set
\[
e_n=\frac{z^n}{\beta_n}.
\]
Then $\{e_n\}_{n=0}^\infty$ is an orthonormal basis of
$\mathcal D_\alpha$, and
\[
\beta_n^2\asymp n^{1-\alpha}.
\tag{4.2}
\]
Let $P_N$ be the orthogonal projection onto
\[
\operatorname{span}\{e_0,\ldots,e_N\},
\]
and put
\[
Q_N=I-P_N.
\]

We first use the boundary separation hypothesis to choose the constants
that will appear in the tail estimate. Since $\varphi$ and $\psi$
extend continuously to $\overline{\mathbb D}$, the function
\[
z\longmapsto
\min\{|\varphi(z)|,|\psi(z)|\}
\]
is continuous on $\overline{\mathbb D}$. Hence
\[
r_0
:=
\max_{z\in\overline{\mathbb D}}
\min\{|\varphi(z)|,|\psi(z)|\}
\]
is well defined. We claim that
\[
r_0<1.
\tag{4.3}
\]
Indeed, if $r_0=1$, then there exists
$\zeta\in\overline{\mathbb D}$ such that
\[
|\varphi(\zeta)|=|\psi(\zeta)|=1.
\]
Since $\varphi$ and $\psi$ are nonconstant self-maps of
$\mathbb D$, the maximum modulus principle implies that
$\zeta\notin\mathbb D$. Thus $\zeta\in\mathbb T$, and hence
\[
\zeta
\in
\varphi^{-1}(\mathbb T)
\cap
\psi^{-1}(\mathbb T)
\cap
\mathbb T,
\]
contrary to the hypothesis.

Choose and fix numbers $r$ and $\rho$ such that
\[
\max\{r_0,|\varphi(0)|,|\psi(0)|\}
<r<\rho<1.
\]
We next establish a tail estimate corresponding to these fixed
numbers. If
\[
f=\sum_{n=0}^{\infty}a_ne_n,
\]
then
\[
Q_Nf=\sum_{n>N}a_ne_n,
\]
and, since $\{e_n\}_{n=0}^\infty$ is an orthonormal basis,
\[
\sum_{n>N}|a_n|^2
=
\|Q_Nf\|_{\mathcal D_\alpha}^2
\leqslant
\|f\|_{\mathcal D_\alpha}^2.
\]
For $|w|\leqslant r$, Cauchy--Schwarz and \textup{(4.2)} give
\[
\begin{aligned}
|Q_Nf(w)|^2
&=
\left|
\sum_{n>N}a_n\frac{w^n}{\beta_n}
\right|^2\\
&\leqslant
\left(\sum_{n>N}|a_n|^2\right)
\left(\sum_{n>N}\frac{|w|^{2n}}{\beta_n^2}\right)\\
&\lesssim
\|f\|_{\mathcal D_\alpha}^2
\sum_{n>N}n^{\alpha-1}r^{2n}.
\end{aligned}
\]
Since $r<\rho$, let
$
q=\big(\frac r\rho\big)^2<1.
$
Then
\[
\begin{aligned}
\sum_{n>N}n^{\alpha-1}r^{2n}
=
\sum_{n>N}
n^{\alpha-1}q^n\rho^{2n}
\leqslant
\rho^{2N}
\sum_{n=1}^{\infty}n^{\alpha-1}q^n
\lesssim
\rho^{2N},
\end{aligned}
\]
because the last series converges. Therefore,
\[
|Q_Nf(w)|
\lesssim
\rho^N\|f\|_{\mathcal D_\alpha}.
\]

Similarly,
\[
(Q_Nf)'(w)
=
\sum_{n>N}
a_n\frac{nw^{n-1}}{\beta_n},
\]
and hence, again by the  Cauchy--Schwarz inequality  and \textup{(4.2)},
\[
\begin{aligned}
|(Q_Nf)'(w)|^2
&\leqslant
\left(\sum_{n>N}|a_n|^2\right)
\left(
\sum_{n>N}
\frac{n^2|w|^{2n-2}}{\beta_n^2}
\right)\\
&\lesssim
\|f\|_{\mathcal D_\alpha}^2
\sum_{n>N}
n^{\alpha+1}r^{2n-2}.
\end{aligned}
\]
Thus,
\[
\begin{aligned}
\sum_{n>N}
n^{\alpha+1}r^{2n-2}
&=
\frac1{r^2}
\sum_{n>N}
n^{\alpha+1}q^n\rho^{2n}\\
&\leqslant
\frac{\rho^{2N}}{r^2}
\sum_{n=1}^{\infty}
n^{\alpha+1}q^n\\
&\lesssim
\rho^{2N}.
\end{aligned}
\]
Consequently,
\[
\sup_{|w|\leqslant r}|Q_Nf(w)|
\lesssim
\rho^N\|f\|_{\mathcal D_\alpha}
\quad\text{and}\quad
\sup_{|w|\leqslant r}|(Q_Nf)'(w)|
\lesssim
\rho^N\|f\|_{\mathcal D_\alpha}.
\tag{4.4}
\]

Now set
\[
A=\{z\in\mathbb D:|\varphi(z)|\leqslant r\} \ \ \ \ \mathrm{and}\ \ \ \
B=\mathbb D\backslash A.
\]
If $z\in B$, then
\[
|\varphi(z)|>r>r_0.
\]
By the definition of $r_0$, we have
\[
|\psi(z)|\leqslant r_0<r,
\qquad z\in B.
\tag{4.5}
\]

Let $\|f\|_{\mathcal D_\alpha}\leqslant 1$ and
$\|g\|_{\mathcal D_\alpha}\leqslant1$. Since $Q_N$ is an orthogonal
projection and $T=C_\psi^*C_\varphi$,
\[
\begin{aligned}
\langle Q_NTQ_Nf,g\rangle_{\mathcal D_\alpha}
&=
\langle TQ_Nf,Q_Ng\rangle_{\mathcal D_\alpha}=
\langle C_\varphi Q_Nf,
C_\psi Q_Ng\rangle_{\mathcal D_\alpha}.
\end{aligned}
\]
The contribution of the value at the origin is
\[
Q_Nf(\varphi(0))
\overline{Q_Ng(\psi(0))}.
\]
Since
\[
|\varphi(0)|<r \ \ \ \  \mathrm{and}\ \ \ \
|\psi(0)|<r,
\]
the estimate \textup{(4.4)} yields
\[
\left|
Q_Nf(\varphi(0))
\overline{Q_Ng(\psi(0))}
\right|
\lesssim
\rho^{2N}.
\]

It remains to estimate the derivative part. Split the integral over
$A$ and $B$. On $A$, we have $|\varphi(z)|\leqslant r$, so
\textup{(4.4)} gives
\[
|(Q_Nf)'(\varphi(z))|
\lesssim
\rho^N.
\]
Hence, by the Cauchy--Schwarz inequality,
\[
\begin{aligned}
&\int_A
|(Q_Nf)'(\varphi(z))\varphi'(z)|
|(Q_Ng)'(\psi(z))\psi'(z)|
\,dA_\alpha(z)
\\
&\quad\lesssim
\rho^N
\int_A
|\varphi'(z)|
|(Q_Ng)'(\psi(z))\psi'(z)|
\,dA_\alpha(z)
\\
&\quad\leqslant
\rho^N
\left(
\int_{\mathbb D}|\varphi'(z)|^2\,dA_\alpha(z)
\right)^{1/2}
\left(
\int_{\mathbb D}
|(Q_Ng)'(\psi(z))|^2|\psi'(z)|^2
\,dA_\alpha(z)
\right)^{1/2}
\\
&\quad=
\rho^N
\|\varphi'\|_{A_\alpha^2}
\,
\|D_\psi((Q_Ng)')\|_{A_\alpha^2}
\\
&\quad\lesssim
\rho^N.
\end{aligned}
\]
Here we used the boundedness of $D_\psi$ on $A_\alpha^2$ and
\[
\|(Q_Ng)'\|_{A_\alpha^2}
=
\|Q_Ng\|_{\mathcal D_\alpha}
\leqslant1.
\]

On $B$, \textup{(4.5)} gives $|\psi(z)|<r$, and therefore
\textup{(4.4)} yields
\[
|(Q_Ng)'(\psi(z))|
\lesssim
\rho^N.
\]
It follows that
\[
\begin{aligned}
&\int_B
|(Q_Nf)'(\varphi(z))\varphi'(z)|
|(Q_Ng)'(\psi(z))\psi'(z)|
\,dA_\alpha(z)
\\
&\quad\lesssim
\rho^N
\int_B
|(Q_Nf)'(\varphi(z))\varphi'(z)|
|\psi'(z)|
\,dA_\alpha(z)
\\
&\quad\leqslant
\rho^N
\left(
\int_{\mathbb D}
|(Q_Nf)'(\varphi(z))|^2|\varphi'(z)|^2
\,dA_\alpha(z)
\right)^{1/2}
\left(
\int_{\mathbb D}|\psi'(z)|^2\,dA_\alpha(z)
\right)^{1/2}
\\
&\quad=
\rho^N
\|D_\varphi((Q_Nf)')\|_{A_\alpha^2}
\,
\|\psi'\|_{A_\alpha^2}
\\
&\quad\lesssim
\rho^N,
\end{aligned}
\]
where we used the boundedness of $D_\varphi$.

Combining the estimate of the value at the origin with the estimates
over $A$ and $B$, we obtain
\[
\left|
\langle Q_NTQ_Nf,g\rangle_{\mathcal D_\alpha}
\right|
\lesssim
\rho^N.
\]
Taking the supremum over all
$\|f\|_{\mathcal D_\alpha}\leqslant 1$ and $
\|g\|_{\mathcal D_\alpha}\leqslant1$ gives
\[
\|Q_NTQ_N\|
\lesssim
\rho^N.
\tag{4.6}
\]

Finally, define
\[
F_N
=
TP_N+P_NT-P_NTP_N.
\]
Since $Q_N=I-P_N$, we have
\[
F_N
=
TP_N+P_NT(I-P_N)
=
TP_N+P_NTQ_N.
\]
Hence
\[
\operatorname{rank}(F_N)
\leqslant
\operatorname{rank}(TP_N)
+
\operatorname{rank}(P_NTQ_N)
\leqslant
2(N+1).
\]
Moreover,
\[
\begin{aligned}
T-F_N
&=
T-TP_N-P_NT+P_NTP_N\\
&=
(I-P_N)T(I-P_N)\\
&=
Q_NTQ_N.
\end{aligned}
\]
Therefore, by \textup{(4.6)},
\[
\|T-F_N\|
=
\|Q_NTQ_N\|
\leqslant
C\rho^N
\]
for some constant $C>0$ independent of $N$. This completes the proof.
\end{proof}

We can now characterize the Schatten class membership of the reverse
product $C_\psi^*C_\varphi$.

\begin{theorem}\label{thm4.8}
Let $\alpha>-1$ and $0<p<\infty$.  Suppose that $\varphi$ and $\psi$
are nonconstant linear fractional self-maps of $\mathbb D$.  Then
\[
C_\psi^*C_\varphi
\in S_p(\mathcal D_\alpha)
\]
if and only if
\[
\varphi^{-1}(\mathbb T)
\cap
\psi^{-1}(\mathbb T)
\cap
\mathbb T
=
\varnothing.
\]
If this condition holds, then
$$
C_\psi^*C_\varphi
\in S_q(\mathcal D_\alpha)$$
for every $0<q<\infty$.
\end{theorem}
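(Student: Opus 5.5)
The proof will combine the compactness characterization of Proposition~\ref{prop4.6} with the exponential finite rank approximation of Lemma~\ref{lem4.7}. Both directions reduce to facts already established.

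\emph{Necessity.} Suppose $C_\psi^*C_\varphi\in S_p(\mathcal D_\alpha)$ for some $0<p<\infty$. Every Schatten class operator is compact, so $C_\psi^*C_\varphi\in\mathcal K(\mathcal D_\alpha)$. Proposition~\ref{prop4.6} then gives
\[
\varphi^{-1}(\mathbb T)\cap\psi^{-1}(\mathbb T)\cap\mathbb T=\varnothing .
\]
No further work is needed in this direction.

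\emph{Sufficiency.} Assume the boundary separation condition and put $T=C_\psi^*C_\varphi$. Lemma~\ref{lem4.7} supplies finite rank operators $F_N$ with $\operatorname{rank}(F_N)\leqslant2(N+1)$ and $\|T-F_N\|\leqslant C\rho^N$, where $0<\rho<1$. I will use the approximation characterization of singular values,
\[
s_{k+1}(T)=\inf\{\|T-F\|:\operatorname{rank}(F)\leqslant k\}.
\]
Taking $k=2(N+1)$ gives $s_{2N+3}(T)\leqslant C\rho^N$ for every $N\geqslant0$.

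For a general index $n\geqslant3$, choose $N=\lfloor (n-3)/2\rfloor$, so that $2N+3\leqslant n$. Since singular values are nonincreasing,
\[
s_n(T)\leqslant s_{2N+3}(T)\leqslant C\rho^N\leqslant C'\rho^{n/2}.
\]
Hence $s_n(T)\lesssim(\sqrt\rho)^n$, a geometric decay. For every $0<q<\infty$ this gives
\[
\sum_n s_n(T)^q\lesssim\sum_n \rho^{qn/2}<\infty ,
\]
because $\rho^{q/2}<1$. Therefore $T\in S_q(\mathcal D_\alpha)$ for all $q$, and in particular for the given $p$. This also proves the final assertion of the theorem, since the boundary condition does not involve $p$.

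The main obstacle, the tail estimate $\|Q_NTQ_N\|\lesssim\rho^N$, is already handled in Lemma~\ref{lem4.7}. What remains here is only to carry out the index bookkeeping in the singular value bound carefully.
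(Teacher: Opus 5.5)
Your proposal is correct and follows the paper's proof essentially line for line. Necessity comes from compactness together with Proposition~\ref{prop4.6}, and sufficiency from Lemma~\ref{lem4.7} via the approximation characterization $s_{2N+3}(T)\leqslant C\rho^N$. The only difference is cosmetic: you treat all $n\geqslant3$ at once with $N=\lfloor (n-3)/2\rfloor$ instead of splitting into odd and even indices, and you reach the same geometric bound $s_n(T)\lesssim(\sqrt\rho)^n$.
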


\begin{proof}
Suppose first that
\[
C_\psi^*C_\varphi
\in S_p(\mathcal D_\alpha).
\]
Every Schatten class operator is compact, so
Proposition~\ref{prop4.6} gives
\[
\varphi^{-1}(\mathbb T)
\cap
\psi^{-1}(\mathbb T)
\cap
\mathbb T
=
\varnothing.
\]

Conversely, assume that
\[
\varphi^{-1}(\mathbb T)
\cap
\psi^{-1}(\mathbb T)
\cap
\mathbb T
=
\varnothing.
\]
By Lemma~\ref{lem4.7}, there exist finite rank
operators $F_N$, a constant $C>0$, and $0<\rho<1$ such that
\[
\operatorname{rank}(F_N)\leqslant2(N+1)
\]
and
\[
\|C_\psi^*C_\varphi-F_N\|
\leqslant
C\rho^N,
\qquad N\geqslant0.
\]
Since
\[
\operatorname{rank}(F_N)\leqslant2N+2<2N+3,
\]
the definition of the singular values \cite{PutinarTener2018} gives
\[
s_{2N+3}(C_\psi^*C_\varphi)
\leqslant
\|C_\psi^*C_\varphi-F_N\|
\leqslant
C\rho^N.
\]

Set
\(
\rho_1=\sqrt{\rho},
\)
so that $0<\rho_1<1$. If $n\geqslant3$ is odd, write
\(
n=2N+3
\)
for some $N\geqslant0$. Then
\[
s_n(C_\psi^*C_\varphi)
=
s_{2N+3}(C_\psi^*C_\varphi)
\leqslant
C\rho^N
=
C\rho^{(n-3)/2}
=
C\rho^{-3/2}\rho_1^n.
\]

If $n\geqslant4$ is even, write
\(
n=2N+4
\)
for some $N\geqslant0$. Since the singular values are nonincreasing,
\[
s_n(C_\psi^*C_\varphi)
=
s_{2N+4}(C_\psi^*C_\varphi)
\leqslant
s_{2N+3}(C_\psi^*C_\varphi)
\leqslant
C\rho^N
=
C\rho^{(n-4)/2}
=
C\rho^{-2}\rho_1^n.
\]

Therefore, after enlarging the constant to include the cases $n=1,2$,
there exists $C_1>0$ such that
\[
s_n(C_\psi^*C_\varphi)\leqslant C_1\rho_1^n,
\qquad n\geqslant1.
\]
Consequently, for every $q>0$,
\[
\sum_{n=1}^{\infty}s_n(C_\psi^*C_\varphi)^q
\leqslant
C_1^q\sum_{n=1}^{\infty}\rho_1^{qn}
<\infty,
\]
since $0<\rho_1^q<1$. Hence, we have
$$
C_\psi^*C_\varphi\in S_q(\mathcal D_\alpha)$$
for every $q>0$.
In particular,
\(
C_\psi^*C_\varphi\in S_p(\mathcal D_\alpha).
\)
This completes the proof.
\end{proof}

\section{Essential norms for composition operator products}

The preceding sections characterize Schatten class membership of the two products in terms of the boundary geometry of their linear fractional symbols. We now study the essential norms of these products, which measure their distance from the compact operators. Adjoint formulas and boundary localization yield exact formulas when $\alpha\geqslant0$, while the derivative model and positive Toeplitz operator estimates provide two-sided bounds when $-1<\alpha<0$.

For a bounded operator $T$ on a Hilbert space $\mathcal H$, its essential
norm is defined by
$$\|T\|_e=\inf\{\|T-K\|:K\in\mathcal K(\mathcal H)\}.$$
Equivalently, if
$\pi:\mathcal B(\mathcal H)\to
\mathcal B(\mathcal H)/\mathcal K(\mathcal H)$
denotes the quotient map from $\mathcal B(\mathcal H)$ onto the Calkin algebra, then
$\|T\|_e=\|\pi(T)\|$.

We first consider the classical Dirichlet space
$\mathcal D=\mathcal D_0$.
In this case, the Cowen-type adjoint formula modulo compact operators
takes a particularly simple form and leads to an exact formula for the
essential norm of the product $C_\varphi C_\psi^*$.

\begin{lemma}\label{lem5.1}
Let $\eta$ be a univalent analytic self-map of $\mathbb D$. If $\eta$
has an angular derivative at some point $\xi\in\mathbb T$, then
\[
\|C_\eta\|_{e,\mathcal D}=1.
\]
\end{lemma}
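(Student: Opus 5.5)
We bound the two sides separately. The upper bound comes from the derivative model of Proposition~\ref{prop2.1} with $\alpha=0$, and the lower bound from testing $D_\eta$ against normalized Bergman kernels that approach the boundary contact point. Throughout, $\mathcal D=\mathcal D_0$ and $A^2=A_0^2$.

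\emph{Upper bound.} For $g\in A^2$, univalence of $\eta$ and the change of variables $w=\eta(z)$ give
\[
\|D_\eta g\|_{A^2}^2=\int_{\mathbb D}|g(\eta(z))|^2|\eta'(z)|^2\,dA(z)=\int_{\eta(\mathbb D)}|g(w)|^2\,dA(w)\leqslant\|g\|_{A^2}^2 .
\]
So $D_\eta$ is a contraction. By the unitary equivalence and block form in Proposition~\ref{prop2.1}, $C_\eta$ is then bounded on $\mathcal D$, and
\[
\|C_\eta\|_{e,\mathcal D}=\|D_\eta\|_{e,A^2}\leqslant\|D_\eta\|\leqslant1 .
\]

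\emph{Lower bound.} I will use the standard fact that $\|T\|_e\geqslant\limsup_n\|Tx_n\|$ for any weakly null sequence of unit vectors $x_n$. Put $\zeta=\eta(\xi)\in\mathbb T$, let $z_n=r_n\xi$ with $r_n\to1^-$, and set $w_n=\eta(z_n)$, so that $w_n\to\zeta$. Take $x_n=k_{w_n}^0$, the normalized Bergman kernels; these are weakly null in $A^2$.

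Let $\sigma_w(u)=(w-u)/(1-\overline wu)$. Since $|k_w^0|^2\,dA$ is exactly the pullback of $dA$ under the involution $\sigma_w$, the computation in the upper bound gives
\[
\|D_\eta k_{w_n}^0\|_{A^2}^2=\int_{\eta(\mathbb D)}|k_{w_n}^0|^2\,dA=A\bigl(\sigma_{w_n}(\eta(\mathbb D))\bigr)=A\bigl(\phi_n(\mathbb D)\bigr),
\qquad
\phi_n=\sigma_{w_n}\circ\eta\circ\sigma_{z_n}.
\]
It therefore suffices to show $A(\phi_n(\mathbb D))\to1$.

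Each $\phi_n$ is a univalent self-map of $\mathbb D$ with $\phi_n(0)=0$ and
\[
|\phi_n'(0)|=\frac{(1-|z_n|^2)|\eta'(z_n)|}{1-|\eta(z_n)|^2}.
\]
By the Julia--Carath\'eodory theorem, the finite angular derivative at $\xi$ forces this hyperbolic derivative to tend to $1$ along the radius, because both $(1-|\eta(z)|)/(1-|z|)$ and $|\eta'(z)|$ tend to $|\eta'(\xi)|$ nontangentially. Now pass to any subsequence along which $\phi_n$ converges locally uniformly, which is possible by normality. The limit $\phi$ satisfies $\phi(0)=0$ and $|\phi'(0)|=1$, so by the Schwarz lemma it is a rotation.

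Given a compact disk $K=\{|u|\leqslant s\}$ with $s<1$, Hurwitz's theorem (via the argument principle on $|u|=s'$ with $s<s'<1$) shows that $\phi_n(\mathbb D)\supset\phi(\{|u|<s'\})\supset K$ for all large $n$. Hence
\[
\liminf_n A(\phi_n(\mathbb D))\geqslant s^2
\]
for every $s<1$, and since the whole sequence was arbitrary, $A(\phi_n(\mathbb D))\to1$. Consequently $\|C_\eta\|_{e,\mathcal D}=\|D_\eta\|_{e,A^2}\geqslant1$, and together with the upper bound this gives equality.

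The main obstacle is the step $|\phi_n'(0)|\to1$. It requires the Julia--Carath\'eodory theorem in its full form, namely that $\eta'(z)\to\eta'(\xi)$ and $(1-|\eta(z)|)/(1-|z|)\to|\eta'(\xi)|$ as $z\to\xi$ radially. If only the angular-limit statement for the difference quotient is available, I will use the boundary Schwarz--Pick consequence of Julia's lemma instead: the hyperbolic derivative tends to $1$ radially at any point with finite angular derivative.
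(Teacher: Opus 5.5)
Your proof is correct, but it does not follow the paper's route: the paper gives no argument of its own for Lemma~\ref{lem5.1} and simply cites \cite[Theorem~1.2]{LiLuYu2018}.

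\textbf{What you do instead.} You stay inside the paper's derivative model (Proposition~\ref{prop2.1} with $\alpha=0$) and obtain both inequalities directly.
\begin{itemize}
\item Upper bound: univalence and the area formula make $D_\eta$ a contraction on $A^2$, so $\|C_\eta\|_{e,\mathcal D}\leqslant1$ comes for free.
\item Lower bound: you test on the weakly null kernels $k^0_{w_n}$, $w_n=\eta(r_n\xi)$. The identity $|k^0_w|^2\,dA=\sigma_w^*(dA)$ turns $\|D_\eta k^0_{w_n}\|^2$ into the normalized area of $\phi_n(\mathbb D)$. The Julia--Carath\'eodory theorem gives $|\phi_n'(0)|\to1$, and then the Schwarz lemma plus Hurwitz force these areas to tend to $1$.
\end{itemize}

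\textbf{A small wording fix.} Rouch\'e on $|u|=s'$ gives $\phi_n(\{|u|<s'\})\supset K$ directly. The intermediate inclusion $\phi_n(\mathbb D)\supset\phi(\{|u|<s'\})$ that you wrote would need a slightly larger circle. Nothing in the argument depends on it.

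\textbf{Comparison.} The citation is shorter. Your argument is self-contained, uses only classical function theory together with Proposition~\ref{prop2.1}, and shows where the value $1$ comes from. At $\alpha=0$ the change of variables produces exactly the area measure on $\eta(\mathbb D)$, and this is conformally invariant. For $\alpha\neq0$ the same computation instead produces the preimage weight $(1-|\eta^{-1}(w)|^2)^\alpha$. Near the contact point its ratio to $(1-|w|^2)^\alpha$ is about $|\eta'(\xi)|^{-\alpha}$, which is the factor that replaces $1$ in Lemma~\ref{lem5.4}.
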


\begin{proof}
The proof can be found in \cite[Theorem~1.2]{LiLuYu2018}.
\end{proof}

\begin{theorem}\label{thm5.2}
Let $\varphi$ and $\psi$ be nonconstant linear fractional self-maps of
$\mathbb D$. Then
\[
\|C_\varphi C_\psi^*\|_{e,\mathcal D}
=
\begin{cases}
0,
&
\varphi(\mathbb T)\cap\psi(\mathbb T)\cap\mathbb T
=
\varnothing,
\\[2mm]
1,
&
\varphi(\mathbb T)\cap\psi(\mathbb T)\cap\mathbb T
\neq
\varnothing.
\end{cases}
\]
\end{theorem}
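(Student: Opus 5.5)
The plan is to reduce the problem to a single composition operator, exactly as in the proof of Lemma~\ref{lem3.1}, and then apply Lemma~\ref{lem5.1}. The case $\varphi(\mathbb T)\cap\psi(\mathbb T)\cap\mathbb T=\varnothing$ is immediate. By Theorem~\ref{thm3.4}, $C_\varphi C_\psi^*$ then lies in every Schatten class, so it is compact and its essential norm is $0$. The remaining work concerns the case of a common boundary value.

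\textbf{Step 1: reduction.} Assume $\varphi(\xi)=\psi(\zeta)\in\mathbb T$ for some $\xi,\zeta\in\mathbb T$. The block decomposition of $C_\varphi C_\psi^*$ in the proof of Lemma~\ref{lem3.1} differs from $0\oplus\widetilde C_\varphi\widetilde C_\psi^*$ by a finite rank operator. The unitary $U_0$ then gives
\[
\|C_\varphi C_\psi^*\|_{e,\mathcal D}=\|D_\varphi D_\psi^*\|_{e,A_0^2}.
\]
For $\alpha=0$ the factorization from Lemma~\ref{lem3.1} reads $D_\varphi D_\psi^*=M_uD_{\tau\circ\varphi}M_v^*$, with $u=1/\overline\Delta$ and $v=\Delta$ both constant, because the exponent $\alpha$ vanishes and the branch normalization $g(0)\overline{h(0)}=1$ fixes the phases. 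Hence
\[
D_\varphi D_\psi^*=\frac{\Delta'}{\overline\Delta}\,D_{\tau\circ\varphi}
\]
for a suitable unimodular-compatible constant $\Delta'$. I will check that $u\overline{v}$ is exactly the scalar $\overline{\Delta}/\overline\Delta$-type quantity of modulus one. Concretely, $M_v^*=\overline{v}I$, so the scalar is $\overline{\Delta}/\overline{\Delta}=1$, up to the phase fixed by the branch convention.

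\textbf{Step 2: conclude.} Step 1 gives $\|D_\varphi D_\psi^*\|_e=\|D_{\tau\circ\varphi}\|_e=\|C_{\tau\circ\varphi}\|_{e,\mathcal D}$, by Proposition~\ref{prop2.1}. By Lemma~\ref{lem3.3}, the common boundary value forces $(\tau\circ\varphi)(\xi)=\zeta'\in\mathbb T$ for some $\xi\in\mathbb T$. The map $\eta=\tau\circ\varphi$ is a univalent linear fractional self-map that is analytic across $\mathbb T$ with $\eta'(\xi)\ne0$, so it has a finite angular derivative at $\xi$. Lemma~\ref{lem5.1} then gives $\|C_\eta\|_{e,\mathcal D}=1$.

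\textbf{Main obstacle.} The delicate point is the exact constant in Step 1, namely that $u\overline v$ has modulus exactly one rather than merely being bounded and invertible. The general reduction only yields $\|\cdot\|_e\asymp$, whereas here an equality is needed. I will verify it carefully from Theorem~\ref{thm2.4} with $\alpha=0$. In that case $g=(-\overline b z+\overline d)^{-2}$, $h=(cz+d)^2$, $\psi'=\Delta/(cz+d)^2$ and $\tau'\circ\varphi=\overline\Delta/(-\overline b\varphi+\overline d)^2$. These give $v=\Delta$ and $u=1/\overline\Delta$, so that $M_uD_{\tau\circ\varphi}M_v^*=(\overline\Delta/\overline\Delta)D_{\tau\circ\varphi}=D_{\tau\circ\varphi}$, with no branch ambiguity since the powers are integers.
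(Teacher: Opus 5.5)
Your proof is correct. It differs from the paper's only in how the reduction to the single operator $C_{\tau\circ\varphi}$ is carried out.

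\textbf{The paper's route.} The paper works directly on $\mathcal D$ with the modulo-compact adjoint formula of Lemma~\ref{lem4.1}. At $\alpha=0$ one has $g_\psi=h_\psi=1$, so $C_\psi^*\equiv C_\tau$ and hence $C_\varphi C_\psi^*\equiv C_{\tau\circ\varphi}\pmod{\mathcal K(\mathcal D)}$ in one line. Both cases then follow from Lemmas~\ref{lem3.3}, \ref{lem4.4} and \ref{lem5.1}.

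\textbf{Your route.} You pass to the derivative model and use the exact Hurst formula (Theorem~\ref{thm2.4}) on $A^2$. At $\alpha=0$ the multipliers of Lemma~\ref{lem3.1} reduce to the constants $u=1/\overline\Delta$ and $v=\Delta$. This gives the exact identity $D_\varphi D_\psi^*=D_{\tau\circ\varphi}$, equivalently $\widetilde C_\varphi\widetilde C_\psi^*=\widetilde C_{\tau\circ\varphi}$ on $\mathcal D_{0,0}$. You take the compact case from Theorem~\ref{thm3.4} rather than Lemma~\ref{lem4.4}.

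\textbf{What each buys.} Your argument avoids the \v{C}u\v{c}kovi\'c--Le theorem and uses only machinery already built in Sections~2--3. It also shows why $\alpha=0$ yields an equality rather than the two-sided estimate this factorization gives for general $\alpha$: the factors $(-\overline b\varphi+\overline d)^{-\alpha}$ and $(cz+d)^{\alpha}$ vanish. The paper's route is shorter once Lemma~\ref{lem4.1} is available, and it needs no block decomposition.

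\textbf{Presentation.} In Step~1, drop the hedging about a constant $\Delta'$ and a phase ``fixed by the branch convention''. Replace it with the computation from your last paragraph, which shows the scalar is exactly $\overline\Delta/\overline\Delta=1$. There is no branch issue, since the exponents are integers.
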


\begin{proof}
Let $\tau$ be the Krein adjoint of $\psi$. By Lemma~\ref{lem4.1},
when $\alpha=0$ we have $g_\psi=h_\psi=1$, and hence
\[
C_\psi^*\equiv C_\tau
\pmod{\mathcal K(\mathcal D)}.
\]
It follows that
\[
C_\varphi C_\psi^*
\equiv
C_\varphi C_\tau
=
C_{\tau\circ\varphi}
\pmod{\mathcal K(\mathcal D)}.
\]
Therefore,
\[
\|C_\varphi C_\psi^*\|_{e,\mathcal D}
=
\|C_{\tau\circ\varphi}\|_{e,\mathcal D}.
\tag{5.1}
\]

Set $\eta=\tau\circ\varphi$. By Lemma~\ref{lem3.3},
\[
\overline{\eta(\mathbb D)}\subset\mathbb D
\quad\Longleftrightarrow\quad
\varphi(\mathbb T)\cap\psi(\mathbb T)\cap\mathbb T
=
\varnothing.
\tag{5.2}
\]
Suppose first that
$\varphi(\mathbb T)\cap\psi(\mathbb T)\cap\mathbb T=\varnothing$.
Then \textup{(5.2)} gives
$\overline{\eta(\mathbb D)}\subset\mathbb D$, and hence
$C_\eta$ is compact on $\mathcal D$ by Lemma~\ref{lem4.4}.
Thus \textup{(5.1)} yields
$\|C_\varphi C_\psi^*\|_{e,\mathcal D}=0$.

Now we consider the case that
$\varphi(\mathbb T)\cap\psi(\mathbb T)\cap\mathbb T\neq\varnothing$.
Then \textup{(5.2)} gives
$\overline{\eta(\mathbb D)}\not\subset\mathbb D$.
Since $\eta$ is a nonconstant linear fractional self-map of $\mathbb D$,
there exists $\xi\in\mathbb T$ such that $\eta(\xi)\in\mathbb T$.
Moreover, $\eta$ is univalent and extends
continuously to $\overline{\mathbb D}$, so it has an angular derivative at $\xi$.
By Lemma~\ref{lem5.1},
$\|C_\eta\|_{e,\mathcal D}=1$.
Together with \textup{(5.1)}, this gives
$\|C_\varphi C_\psi^*\|_{e,\mathcal D}=1$.

This completes the proof.
\end{proof}

\begin{theorem}\label{thm5.3}
Let $\varphi$ and $\psi$ be nonconstant linear fractional self-maps of
$\mathbb D$. Then
\[
\|C_\psi^*C_\varphi\|_{e,\mathcal D}
=
\begin{cases}
0,
&
\varphi^{-1}(\mathbb T)\cap\psi^{-1}(\mathbb T)\cap\mathbb T
=
\varnothing,
\\[2mm]
1,
&
\varphi^{-1}(\mathbb T)\cap\psi^{-1}(\mathbb T)\cap\mathbb T
\neq
\varnothing.
\end{cases}
\]
\end{theorem}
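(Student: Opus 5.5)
Mirroring the proof of Theorem~\ref{thm5.2}, I would reduce $C_\psi^*C_\varphi$ modulo compacts to a single composition operator and then use Lemma~\ref{lem5.1}. With $\alpha=0$, Lemma~\ref{lem4.1} gives $g_\psi=h_\psi=1$, so
\[
C_\psi^*\equiv C_\tau \pmod{\mathcal K(\mathcal D)},
\]
where $\tau$ is the Krein adjoint of $\psi$. Hence
\[
C_\psi^*C_\varphi\equiv C_\tau C_\varphi=C_{\varphi\circ\tau}\pmod{\mathcal K(\mathcal D)}.
\]
Since compact perturbations do not change the essential norm, this yields
\[
\|C_\psi^*C_\varphi\|_{e,\mathcal D}=\|C_{\varphi\circ\tau}\|_{e,\mathcal D}.
\]
In the Dirichlet case no commutator argument (Lemma~\ref{lem4.2}) is needed, because all the multipliers are identically $1$.

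Put $\eta=\varphi\circ\tau$. This is a nonconstant linear fractional self-map of $\mathbb D$, being a composition of two such maps.

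\textbf{Case 1: the boundary intersection is empty.} If $\varphi^{-1}(\mathbb T)\cap\psi^{-1}(\mathbb T)\cap\mathbb T=\varnothing$, then Lemma~\ref{lem4.5} gives $\overline{\eta(\mathbb D)}\subset\mathbb D$. Lemma~\ref{lem4.4} then shows $C_\eta$ is compact on $\mathcal D$, so the essential norm is $0$. Equivalently, this case follows directly from Proposition~\ref{prop4.6}.

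\textbf{Case 2: the boundary intersection is nonempty.} Lemma~\ref{lem4.5} gives $\overline{\eta(\mathbb D)}\not\subset\mathbb D$. Since $\eta$ extends continuously, indeed analytically, across $\mathbb T$, there is $\xi\in\mathbb T$ with $\eta(\xi)\in\mathbb T$. Because $\eta'(\xi)\neq 0$ is finite, $\eta$ has a finite angular derivative at $\xi$. As $\eta$ is univalent, Lemma~\ref{lem5.1} yields $\|C_\eta\|_{e,\mathcal D}=1$. Therefore $\|C_\psi^*C_\varphi\|_{e,\mathcal D}=1$.

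The only point needing care is the first reduction: the congruence $C_\psi^*\equiv C_\tau$ modulo compacts on $\mathcal D$ must hold exactly, with no leftover multiplier. That is precisely what the $\alpha=0$ specialization of Lemma~\ref{lem4.1} provides, since both $(-\overline b z+\overline d)^{0}$ and $(cz+d)^{0}$ equal $1$ under the normalization $g_\psi(0)\overline{h_\psi(0)}=1$. Everything else is a direct citation of earlier results.
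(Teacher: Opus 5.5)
Your proposal is correct and matches the paper's argument: the paper proves Theorem~\ref{thm5.3} as it does Theorem~\ref{thm5.2}, using the $\alpha=0$ case of Lemma~\ref{lem4.1} to get $C_\psi^*C_\varphi\equiv C_\tau C_\varphi=C_{\varphi\circ\tau}$ modulo compacts, then Lemmas~\ref{lem4.5}, \ref{lem4.4} and \ref{lem5.1} for the two cases. Your remark that no commutator step is needed when the multipliers are identically $1$ is also accurate.
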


\begin{proof}
The proof is similar to that of Theorem~\ref{thm5.2}. The result follows
from Lemmas~\ref{lem4.1}, \ref{lem4.4}, \ref{lem4.5} and \ref{lem5.1}.
\end{proof}

We next turn to the case $\alpha>0$.  We first recall the exact
essential-norm formula of Li, Lu, and Yu for a single composition
operator.

\begin{lemma}\label{lem5.4}
Let $\alpha>0$, and let $\eta$ be a nonconstant linear fractional
self-map of $\mathbb D$. Then
\[
\|C_\eta\|_{e,\mathcal D_\alpha}^2
=
\max_{\substack{\xi\in\mathbb T\\ |\eta(\xi)|=1}}
|\eta'(\xi)|^{-\alpha},
\]
where the right-hand side is understood to be zero if
$\overline{\eta(\mathbb D)}\subset\mathbb D$.
\end{lemma}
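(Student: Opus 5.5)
The plan is to deduce Lemma~\ref{lem5.4} from the essential-norm formula of Li, Lu and Yu \cite{LiLuYu2018} for composition operators on $\mathcal D_\alpha$ with $\alpha>0$, and then evaluate that formula for a linear fractional symbol.

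For univalent (or finitely valent) symbols, that formula expresses $\|C_\eta\|_{e,\mathcal D_\alpha}^2$ as a limsup of a ratio. Concretely, one has
\[
\|C_\eta\|_{e,\mathcal D_\alpha}^2
=
\limsup_{|w|\to1^-}\frac{N_{\eta,\alpha}(w)}{(1-|w|^2)^\alpha},
\]
where $N_{\eta,\alpha}(w)=(1-|\eta^{-1}(w)|^2)^\alpha$ for $w\in\eta(\mathbb D)$ and $N_{\eta,\alpha}(w)=0$ otherwise. In the univalent case this is the same as $\limsup_{|z|\to1^-}\bigl((1-|z|^2)/(1-|\eta(z)|^2)\bigr)^\alpha$. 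I would first quote this formula and check its hypotheses. Here $\eta$ is univalent and analytic across $\mathbb T$, so $C_\eta$ is bounded (Section~2).

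\emph{Step 1: the case $\overline{\eta(\mathbb D)}\subset\mathbb D$.} Here the limsup is zero, because $N_{\eta,\alpha}$ vanishes near $\mathbb T$. Alternatively, Lemma~\ref{lem4.4} shows that $C_\eta$ is compact. Either way the right-hand side is the empty maximum $0$.

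\emph{Step 2: the case where the contact set is nonempty.} Since $\eta$ is a non-automorphic linear fractional map, it has at most one point $\xi\in\mathbb T$ with $|\eta(\xi)|=1$. The automorphism case is separate: there every $\xi\in\mathbb T$ is a contact point and $|\eta'(\xi)|$ varies. Away from the contact set, $1-|\eta(z)|^2$ is bounded below near $\mathbb T$, so the ratio tends to $0$. I would then use compactness of $\overline{\mathbb D}$ to localise the limsup to small neighbourhoods of the contact points. Near a contact point $\xi$, set $\zeta=\eta(\xi)$. Analyticity across $\mathbb T$ and Julia--Carath\'eodory give
\[
1-|\eta(z)|^2=|\eta'(\xi)|\,(1-|z|^2)+O(|z-\xi|^2).
\]
The error term is harmless in nontangential approach. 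For tangential approach one needs the standard Julia-lemma inequality
\[
\frac{1-|\eta(z)|^2}{1-|z|^2}\ \geqslant\ |\eta'(\xi)|\cdot(\text{factor}\to1)
\]
locally. This is what I expect to be the main technical point. I would handle it by conjugating to the upper half-plane, where $\eta$ becomes $w\mapsto\lambda^{-1}w+c$ near the contact point, with $\operatorname{Im}c\geqslant0$. There the ratio of imaginary parts is explicitly at least $\lambda^{-1}$, with equality along suitable radial approach. Hence the local limsup equals $|\eta'(\xi)|^{-\alpha}$.

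Taking the maximum over the finitely many (or, for automorphisms, the compact set of) contact points gives the formula. In the automorphism case the ratio $(1-|z|^2)/(1-|\eta(z)|^2)$ is the continuous function $|1-\overline a z|^2/(1-|a|^2)$. Its boundary limsup is $\max_{\xi\in\mathbb T}|\eta'(\xi)|^{-1}$, and raising it to the power $\alpha$ gives the claim.
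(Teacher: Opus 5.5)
Your proposal is correct and follows the same route as the paper, which simply invokes \cite[Theorem~1.1]{LiLuYu2018} for the univalent symbol $\eta$ (analytic across $\mathbb T$); your evaluation of the resulting $\limsup$, including the separate automorphism case, makes explicit what the paper says follows directly. The tangential estimate you single out as the main technical point is immediate from Schwarz--Pick, since $(1-|\eta(z)|^2)/(1-|z|^2)\geqslant|\eta'(z)|\to|\eta'(\xi)|$ as $z\to\xi$, so the half-plane conjugation is not needed (and if you do conjugate, the imaginary-part ratio there is the Julia quotient, which must still be multiplied by $|\eta(z)-\eta(\xi)|^2/|z-\xi|^2\to|\eta'(\xi)|^2$ to give the ratio you want).
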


\begin{proof}
Since $\eta$ is univalent and analytic on a neighborhood of
$\overline{\mathbb D}$, the result follows directly from
\cite[Theorem~1.1]{LiLuYu2018}.
\end{proof}

\begin{lemma}\label{lem5.5}
Let $\alpha>-1$ and let $s\in C(\overline{\mathbb D})$. Then
\[
\|T_s\|_{e,A_\alpha^2}
=
\max_{\zeta\in\mathbb T}|s(\zeta)|.
\]
\end{lemma}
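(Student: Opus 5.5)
The plan is to prove the two inequalities $\|T_s\|_{e}\leqslant\max_{\mathbb T}|s|$ and $\|T_s\|_{e}\geqslant\max_{\mathbb T}|s|$ separately. Both rest on one standard fact: a Toeplitz operator whose symbol vanishes near the boundary is compact on $A_\alpha^2$.

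\emph{Compactness of symbols with compact support.} Suppose $a\in L^\infty$ vanishes outside the disk $r\mathbb D$ with $r<1$. For $|w|\leqslant r$, point evaluations on $A_\alpha^2$ are uniformly bounded, and more strongly, if $f_n\to0$ weakly then $f_n\to0$ uniformly on $\overline{r\mathbb D}$. Hence
\[
\|T_af_n\|\leqslant\|af_n\|_{L^2(dA_\alpha)}\to0,
\]
so $T_a$ is compact.

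\emph{Upper bound.} Set $M=\max_{\mathbb T}|s|$ and fix $\varepsilon>0$. Since $s$ is uniformly continuous on $\overline{\mathbb D}$, there is $r<1$ with $|s|\leqslant M+\varepsilon$ on the annulus $\{r<|z|<1\}$. Write $s=s\chi_{r\mathbb D}+s\chi_{\{|z|>r\}}$. The first piece gives a compact Toeplitz operator by the previous paragraph. The second has norm at most $\|s\chi_{\{|z|>r\}}\|_\infty\leqslant M+\varepsilon$, since $\|T_a\|\leqslant\|a\|_\infty$ because $P_\alpha$ is an orthogonal projection. Therefore $\|T_s\|_e\leqslant M+\varepsilon$ for every $\varepsilon>0$.

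\emph{Lower bound.} Choose $\zeta\in\mathbb T$ with $|s(\zeta)|=M$, take $z_n=(1-1/n)\zeta$, and let $k_n=k^\alpha_{z_n}$ be the normalized reproducing kernels. These tend to $0$ weakly, so $\|Kk_n\|\to0$ for every compact $K$, which gives
\[
\|T_s-K\|\geqslant\limsup_n|\langle (T_s-K)k_n,k_n\rangle|=\limsup_n|\langle T_sk_n,k_n\rangle|.
\]
The quantity $\langle T_sk_n,k_n\rangle=\int_{\mathbb D}s\,|k_n|^2\,dA_\alpha$ is the Berezin transform of $s$ at $z_n$. The measures $|k_n|^2dA_\alpha$ are probability measures. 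Their mass outside any pseudo-hyperbolic disk $\{w:\rho(w,z_n)<\delta\}$ is, after the change of variables $w=\phi_{z_n}(u)$, independent of $n$ and tends to $0$ as $\delta\to1$. These disks shrink Euclideanly to $\zeta$ as $n\to\infty$. By continuity of $s$ at $\zeta$ it follows that $\tilde s(z_n)\to s(\zeta)$, and hence $\|T_s-K\|\geqslant M$ for every compact $K$.

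The main obstacle is the Berezin-transform convergence $\tilde s(z_n)\to s(\zeta)$. I would handle it with the Möbius invariance $|k_z|^2dA_\alpha\circ\phi_z=|\phi_z'|^{\ldots}$-type identity, which converts the integral into $\int s\circ\phi_{z_n}\,dA_\alpha$, and then apply dominated convergence, since $s\circ\phi_{z_n}\to s(\zeta)$ pointwise on $\mathbb D$. That identity holds for all $\alpha>-1$, so the argument is uniform in $\alpha$.
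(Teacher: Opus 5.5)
Your proof is correct and follows the paper's argument. The lower bound uses normalized reproducing kernels tending weakly to zero together with the boundary behavior of the Berezin transform, and the upper bound comes from a compact perturbation of the symbol. The differences are minor: you truncate to $s\chi_{\{|z|>r\}}$ at the cost of an $\varepsilon$, whereas the paper uses the radial extension $s_0(re^{it})=rs(e^{it})$ to get the bound $\max_{\mathbb T}|s|$ directly. You also prove the compactness and Berezin-convergence facts yourself (via $B_\alpha s(z)=\int_{\mathbb D} s\circ\phi_z\,dA_\alpha$ and dominated convergence), where the paper cites Propositions~7.3 and~6.14 of Zhu's book.
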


\begin{proof}
We first prove the lower estimate. Let
$Q\in\mathcal K(A_\alpha^2)$, and let
\[
k_z^\alpha
=
\frac{K_z^\alpha}{\|K_z^\alpha\|_{A_\alpha^2}}
\]
be the normalized reproducing kernel at $z\in\mathbb D$.
Since $k_z^\alpha$ converges weakly to $0$ on $A_\alpha^2$ as $|z|\to1^-$,
we have
\[
\|Qk_z^\alpha\|_{A_\alpha^2}\rightarrow0
\qquad (|z|\to1^-).
\]
For any $\zeta\in\mathbb T$, letting $z\to\zeta$, we obtain
\[
\begin{aligned}
\|T_s-Q\|
&\geqslant
\left|
\left\langle
(T_s-Q)k_z^\alpha,k_z^\alpha
\right\rangle
\right|
=
\left|
B_\alpha s(z)
-
\langle Qk_z^\alpha,k_z^\alpha\rangle
\right|.
\end{aligned}
\]
Since $s\in C(\overline{\mathbb D})$, the boundary behavior of the
Berezin transform gives
$B_\alpha s(z)\to s(\zeta)$ as $z\to\zeta$; see
\cite[Proposition~6.14]{Zhu2007}. Moreover,
$\langle Qk_z^\alpha,k_z^\alpha\rangle\to0$. Hence
\[
\|T_s-Q\|\geqslant |s(\zeta)|.
\]
Since $\zeta\in\mathbb T$ and
$Q\in\mathcal K(A_\alpha^2)$ are arbitrary, it follows that
\[
\|T_s\|_{e,A_\alpha^2}
\geqslant
\max_{\zeta\in\mathbb T}|s(\zeta)|.
\]

For the reverse inequality, define
$s_0\in C(\overline{\mathbb D})$ by
\[
s_0(0)=0,
\qquad
s_0(re^{it})=rs(e^{it}),
\quad 0<r\leqslant1.
\]
Then $s_0=s$ on $\mathbb T$ and
\[
\|s_0\|_\infty
\leqslant
\max_{\zeta\in\mathbb T}|s(\zeta)|.
\]
Thus $s-s_0$ vanishes on $\mathbb T$. By
\cite[Proposition~7.3]{Zhu2007},
\[
T_s-T_{s_0}
=
T_{s-s_0}
\]
is compact on $A_\alpha^2$. Consequently,
\[
\|T_s\|_{e,A_\alpha^2}
=
\|T_{s_0}\|_{e,A_\alpha^2}
\leqslant
\|T_{s_0}\|
\leqslant
\|s_0\|_\infty
\leqslant
\max_{\zeta\in\mathbb T}|s(\zeta)|.
\]
Combining the two estimates gives
\[
\|T_s\|_{e,A_\alpha^2}
=
\max_{\zeta\in\mathbb T}|s(\zeta)|,
\]
to finish the proof.
\end{proof}

\begin{lemma}\label{lem5.6}
Let $\alpha>-1$ and let $a,b\in C(\overline{\mathbb D})$. Then
\[
T_aT_b-T_{ab}\in\mathcal K(A_\alpha^2).
\]
\end{lemma}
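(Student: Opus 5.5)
The plan is to reduce to the Hankel operator. Since $P_\alpha M_b f = T_b f$ for $f\in A_\alpha^2$, we have
\[
T_{ab}f=P_\alpha(a\,bf)=P_\alpha\bigl(a\,P_\alpha(bf)\bigr)+P_\alpha\bigl(a\,(I-P_\alpha)(bf)\bigr),
\]
and hence
\[
T_{ab}-T_aT_b=P_\alpha M_aH_b .
\]
Since $P_\alpha M_a$ is bounded by $\|a\|_\infty$, it suffices to show that $H_b$ is compact from $A_\alpha^2$ to $L^2(\mathbb D,dA_\alpha)$ for every $b\in C(\overline{\mathbb D})$.

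For the compactness of $H_b$, I would first treat polynomial symbols $b(z)=\sum c_{jk}z^j\overline z^k$. By linearity and the fact that $M_{z^j}$ maps $A_\alpha^2$ into itself, we have $H_{z^j\overline z^k}=H_{\overline z^k}M_{z^j}$ up to bounded factors. So it is enough to show that $H_{\overline z^k}$ is compact. On the orthonormal monomial basis $e_n=z^n/\|z^n\|_{A_\alpha^2}$, one computes
\[
P_\alpha(\overline z^k z^n)=c_{n,k}z^{n-k},\qquad c_{n,k}=\frac{\|z^n\|^2}{\|z^{n-k}\|^2}\ (n\ge k),
\]
and $c_{n,k}=0$ for $n<k$. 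Therefore
\[
\|H_{\overline z^k}e_n\|^2=\frac{\|\overline z^kz^n\|^2-c_{n,k}^2\|z^{n-k}\|^2}{\|z^n\|^2}.
\]
Using $\|z^n\|^2=\Gamma(n+1)\Gamma(\alpha+2)/\Gamma(n+\alpha+2)$, this quantity tends to $0$ as $n\to\infty$, because both $\|z^{n+k}\|^2/\|z^n\|^2$ and $\|z^n\|^2/\|z^{n-k}\|^2$ tend to $1$. Moreover, $H_{\overline z^k}$ maps $e_n$ into the span of $\overline z^kz^n-c\,z^{n-k}$, and these vectors are mutually orthogonal in $L^2(dA_\alpha)$ for different $n$ by rotation invariance. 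So $H_{\overline z^k}$ is a "diagonal" operator whose entries tend to zero, and it is therefore compact, by the same finite-truncation argument used in Lemma~\ref{lem4.2}.

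For general $b\in C(\overline{\mathbb D})$, I would apply Stone--Weierstrass to get polynomials $b_m$ in $z,\overline z$ with $b_m\to b$ uniformly on $\overline{\mathbb D}$. Then $\|H_b-H_{b_m}\|\le\|b-b_m\|_\infty\to0$, and since the compact operators are norm closed, $H_b$ is compact. This completes the argument.

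The main obstacle is the orthogonality and decay computation for $H_{\overline z^k}$. If it becomes messy, an alternative route is available: write $H_b=(I-P_\alpha)M_b$, approximate $b$ by a function equal to a constant near $\mathbb T$ modulo a small sup-norm error, and use that multiplication by a compactly supported continuous function is compact on $A_\alpha^2$, together with the Hankel-operator symmetries. I would try the monomial computation first.
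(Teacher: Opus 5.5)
Your argument is correct and follows the paper's route. Your identity $T_{ab}-T_aT_b=P_\alpha M_aH_b$ is exactly the paper's $T_{ab}-T_aT_b=H_{\overline a}^*H_b$ (since $H_{\overline a}^*h=P_\alpha(ah)$ on $(A_\alpha^2)^\perp$), and both proofs then rest on the compactness of $H_b$ for $b\in C(\overline{\mathbb D})$.

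The only difference is that the paper cites this compactness from Zhu's book, whereas you prove it directly. Your proof of it is sound:
\begin{itemize}
\item the formula $\|H_{\overline z^k}e_n\|^2=\|z^{n+k}\|^2/\|z^n\|^2-\|z^n\|^2/\|z^{n-k}\|^2\to0$ for $n\geqslant k$ is right;
\item the images of the $e_n$ are mutually orthogonal by rotation invariance;
\item the exact factorization $H_{z^j\overline z^k}=H_{\overline z^k}M_{z^j}$ holds;
\item the Stone--Weierstrass approximation with $\|H_b-H_{b_m}\|\leqslant\|b-b_m\|_\infty$ closes the argument.
\end{itemize}
So the fallback you sketch is not needed. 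As stated it would fail anyway, since a continuous $b$ with nonconstant boundary values cannot be uniformly approximated by functions that are constant near $\mathbb T$.
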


\begin{proof}
Let $P_\alpha$ be the Bergman projection on $A_\alpha^2$, and let
$T_a$ and $H_b$ denote the Toeplitz and Hankel operators with symbols
$a$ and $b$, respectively.
For $h\in L^2(\mathbb D,dA_\alpha)\ominus A_\alpha^2$, we have
$
H_{\overline a}^*h=P_\alpha(ah).
$
Hence, for every $f\in A_\alpha^2$,
\[
\begin{aligned}
H_{\overline a}^*H_bf
&=
P_\alpha\bigl(a(I-P_\alpha)(bf)\bigr)\\
&=
P_\alpha(abf)-P_\alpha\bigl(aP_\alpha(bf)\bigr)\\
&=
T_{ab}f-T_aT_bf.
\end{aligned}
\]

Since $b\in C(\overline{\mathbb D})$, the Hankel operator $H_b$ is
compact on the weighted Bergman space $A_\alpha^2$; see
\cite[p.~226]{Zhu2007}. Moreover, $H_{\overline a}^*$ is bounded since
$a\in C(\overline{\mathbb D})$. Consequently,
$H_{\overline a}^*H_b$ is compact, and hence
\[
T_aT_b-T_{ab}\in\mathcal K(A_\alpha^2).
\]
This completes the proof of the lemma.
\end{proof}

\begin{lemma}\label{lem5.7}
Let $\alpha>0$, let $\eta$ be a nonconstant linear fractional self-map
of $\mathbb D$, and let $u$ and $v$ be analytic on a neighborhood of
$\overline{\mathbb D}$. Then
\[
\|M_uD_\eta M_v^*\|_{e,A_\alpha^2}^2
=
\max_{\substack{\xi\in\mathbb T\\ |\eta(\xi)|=1}}
|u(\xi)|^2
|v(\eta(\xi))|^2
|\eta'(\xi)|^{-\alpha},
\]
where the right-hand side is understood to be zero if
$\overline{\eta(\mathbb D)}\subset\mathbb D$.
\end{lemma}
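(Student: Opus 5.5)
The plan is to compute $\|M_uD_\eta M_v^*\|_{e}^2$ through the $C^*$-identity $\|T\|_e^2=\|TT^*\|_e$. First I would move the adjoint multiplier through $D_\eta$ modulo compacts, and then localize at the boundary contact points of $\eta$. Write $T=M_uD_\eta M_v^*$, so that
\[
TT^*=M_uD_\eta M_v^*M_vD_\eta^*M_u^*.
\]
On $A_\alpha^2$ we have $M_v^*M_v=T_{|v|^2}$ and $M_vM_v^*=T_vT_{\overline v}$. By Lemma~\ref{lem5.6} both are congruent to $T_{|v|^2}$ modulo $\mathcal K(A_\alpha^2)$, so $M_v^*M_v\equiv M_vM_v^*$. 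The exact intertwining $D_\eta M_v=M_{v\circ\eta}D_\eta$ then gives
\[
TT^*\equiv M_uM_{v\circ\eta}D_\eta D_\eta^*M_{v\circ\eta}^*M_u^*
=M_wD_\eta D_\eta^*M_w^*\pmod{\mathcal K},\qquad w=u\,(v\circ\eta).
\]
Here $w$ is analytic on a neighborhood of $\overline{\mathbb D}$. Hence $\|T\|_e^2=\|M_wD_\eta\|_e^2$, by the $C^*$-identity applied to $M_wD_\eta$. It therefore suffices to show
\[
\|M_wD_\eta\|_e^2=\max_{\xi\in\mathbb T,\ |\eta(\xi)|=1}|w(\xi)|^2|\eta'(\xi)|^{-\alpha}.
\]

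The next step is localization. Let $\Gamma=\{\xi\in\mathbb T:|\eta(\xi)|=1\}$. If $\Gamma=\varnothing$, then $D_\eta$ is compact by Lemma~\ref{lem4.4} and Proposition~\ref{prop2.1}, and both sides vanish. Otherwise I would use $\|M_wD_\eta\|_e^2=\|D_\eta^*T_{|w|^2}D_\eta\|_e$. I would then show that $D_\eta^*T_{\chi}D_\eta$ is compact whenever $\chi\in C(\overline{\mathbb D})$ vanishes on a neighborhood of $\Gamma$. This should follow from the change-of-variables identity
\[
\langle D_\eta^*T_\chi D_\eta f,f\rangle=\int_{\mathbb D}\chi\,|f\circ\eta|^2|\eta'|^2\,dA_\alpha .
\]
On the support of $\chi$ we have $|\eta|\le r<1$ near $\mathbb T$, so this form is dominated by a compact embedding. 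With $\chi$ supported off $\Gamma$, the symbol $|w|^2$ may then be replaced modulo compacts by $\sum_j|w(\xi_j)|^2\chi_j$, where $\xi_j\in\Gamma$ and the $\chi_j$ form a partition of unity near $\Gamma$ with small supports. The error term is then bounded by $\varepsilon\|D_\eta\|^2$. When $\eta$ is an automorphism, $\Gamma=\mathbb T$ and a finite partition with $\varepsilon\to0$ is still fine.

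Finally, I would identify the local essential norm: $\|D_\eta^*T_{\chi_j}D_\eta\|_e\approx|\eta'(\xi_j)|^{-\alpha}$ when $\chi_j$ is concentrated near $\xi_j$. The upper bound would come from Lemma~\ref{lem5.4} applied to the linear fractional map together with Proposition~\ref{prop2.1}. For a localized piece I would use the continuity of $|\eta'|^{-\alpha}$ on $\Gamma$ and the fact that the global essential norm is a maximum over $\Gamma$ of local contributions. The lower bound would come from testing on normalized kernels $k_z^\alpha$ with $z\to\xi_j$ radially, using Lemma~\ref{lem5.5}-type Berezin limits. The pieces with disjoint supports are asymptotically orthogonal, which gives the maximum rather than a sum.

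The main obstacle is this last step: extracting a sharp local version of Lemma~\ref{lem5.4}, since that lemma only gives the global maximum. I would first attempt the lower bound directly, by computing
\[
\|D_\eta^*T_{\chi_j}D_\eta k_z\|\quad\text{or}\quad\langle M_wD_\eta D_\eta^*M_w^*K_{\eta(z)},K_{\eta(z)}\rangle
\]
for $z\to\xi_j$, using the explicit kernel $K^\alpha$ and $D_\eta^*K_\lambda=\overline{\eta'}\,K_{\eta}$-type formulas from the adjoint structure of Theorem~\ref{thm2.4}. I would then match it against the upper bound inherited from Lemma~\ref{lem5.4}.
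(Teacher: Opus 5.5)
\paragraph{Verdict.} Your approach is sound except for automorphisms: there the sharp upper bound is missing, and the argument is circular at exactly the step you flagged as the main obstacle.

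\paragraph{What works.} Your opening reduction is correct, and it is a different and neater route than the paper's.
\begin{itemize}
\item Since $M_v^*M_v=T_{|v|^2}$ exactly, and $M_vM_v^*=T_vT_{\overline v}\equiv T_{|v|^2}$ by Lemma~\ref{lem5.6}, the intertwining $D_\eta M_v=M_{v\circ\eta}D_\eta$ gives $\|T\|_e^2=\|M_wD_\eta\|_e^2$ with $w=u\,(v\circ\eta)$. The paper instead removes $v$ by proving $D_\eta M_{v-v(\zeta)}^*$ compact through the Cowen--Hurst formula.
\item Your compactness claim for $D_\eta^*T_\chi D_\eta$, with $\chi$ vanishing near $\Gamma$, is fine.
\item The kernel lower bound also works without any partition. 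From $D_\eta^*K_\lambda=\overline{\eta'(\lambda)}K_{\eta(\lambda)}$ and Julia--Carath\'eodory you get $\|(M_wD_\eta)^*k_\lambda\|^2\to|w(\xi)|^2|\eta'(\xi)|^{-\alpha}$ as $\lambda\to\xi$ radially.
\item For a non-automorphism you should say explicitly that $\Gamma$ is a single point. A M\"obius map is injective on $\overline{\mathbb D}$, and its image is a disk internally tangent to $\mathbb T$ at one point. So your partition is one cutoff $\chi\equiv1$ near $\xi$, $D_\eta^*T_{1-\chi}D_\eta$ is compact, and Lemma~\ref{lem5.4} already gives $\|D_\eta\|_e^2=|\eta'(\xi)|^{-\alpha}$. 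No local version is needed in that case.
\end{itemize}

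\paragraph{The gap.} The problem is the automorphism case. There $\Gamma=\mathbb T$, and $|\eta'|$ is not constant on $\mathbb T$ unless $\eta$ is a rotation.
\begin{itemize}
\item The only upper bound you can inherit from Lemma~\ref{lem5.4} is global. Positivity gives $\|D_\eta^*T_{|w|^2}D_\eta\|_e\le\max_{\mathbb T}|w|^2\cdot\max_{\mathbb T}|\eta'|^{-\alpha}$. This is in general strictly larger than $\max_{\mathbb T}|w|^2|\eta'|^{-\alpha}$, for instance when $|w|$ peaks where $|\eta'|$ is large.
\item Your appeal to ``the fact that the global essential norm is a maximum over $\Gamma$ of local contributions'' is exactly the statement to be proved.
\end{itemize}

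\paragraph{The missing idea.} For an automorphism no localization is needed. The identity $1-|\eta^{-1}(\lambda)|^2=|(\eta^{-1})'(\lambda)|(1-|\lambda|^2)$ lets you substitute $\lambda=\eta(z)$ in $\int|w|^2|f\circ\eta|^2|\eta'|^2\,dA_\alpha$. This shows $D_\eta^*T_{|w|^2}D_\eta=T_r$ exactly, with $r(\lambda)=|w(\eta^{-1}(\lambda))|^2|(\eta^{-1})'(\lambda)|^{\alpha}$ continuous on $\overline{\mathbb D}$. Lemma~\ref{lem5.5} then gives $\|T_r\|_e=\max_{\mathbb T}r=\max_{\xi\in\mathbb T}|w(\xi)|^2|\eta'(\xi)|^{-\alpha}$ at once. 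This is how the paper treats automorphisms, there via $T^*T=M_vT_rM_v^*\equiv T_{|v|^2r}$ and Lemma~\ref{lem5.6}. Combined with your $TT^*$ reduction, it would close your argument without any partition of unity or local form of Lemma~\ref{lem5.4}.
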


\begin{proof}
If $\overline{\eta(\mathbb D)}\subset\mathbb D$, then $C_\eta$ is
compact on $\mathcal D_\alpha$ by Lemma~\ref{lem4.4}. Hence
$D_\eta$ is compact on $A_\alpha^2$ by Proposition~\ref{prop2.1},
and the conclusion follows immediately.

Suppose that $\eta$ is not an automorphism and
$\overline{\eta(\mathbb D)}\cap\mathbb T\neq\varnothing$.
Since $\eta(\mathbb D)$ is a proper disk internally tangent to
$\mathbb T$, there exist unique points $\xi,\zeta\in\mathbb T$
such that $\eta(\xi)=\zeta$. We first show that
\[
M_{u-u(\xi)}D_\eta\in\mathcal K(A_\alpha^2)\ \ \ \ \mathrm{and}\ \ \ \
D_\eta M_{v-v(\zeta)}^*\in\mathcal K(A_\alpha^2).
\tag{5.3}
\]

For the first assertion, let $\{f_n\}_{n=1}^\infty\subset A_\alpha^2$ converge
weakly to zero. Then $\{f_n\}_{n=1}^\infty$ is bounded. Given $\varepsilon>0$,
choose a neighborhood $U$ of $\xi$ such that
\[
|u(z)-u(\xi)|<\varepsilon,
\qquad z\in U\cap\mathbb D.
\]
Since $D_\eta$ is bounded on $A_\alpha^2$, there exists a constant
$C>0$, independent of $n$ and $\varepsilon$, such that
\[
\int_{U\cap\mathbb D}
|u(z)-u(\xi)|^2
|D_\eta f_n(z)|^2\,dA_\alpha(z)
\leqslant C\varepsilon^2.
\]

Since $\xi$ is the unique boundary contact point of $\eta$, there
exists $0<r<1$ such that
\[
\eta(\mathbb D\backslash U)\subset r\mathbb D.
\]
Indeed, $\eta$ extends continuously to $\overline{\mathbb D}$ and
$|\eta(z)|<1$ on the compact set
$\overline{\mathbb D}\backslash U$.

The sequence $\{f_n\}_{n=1}^\infty$ is bounded and converges uniformly to zero on compact
subsets of $\mathbb D$.
Therefore,
\[
\sup_{z\in\mathbb D\backslash U}|f_n(\eta(z))|
\leqslant
\sup_{w\in r\overline{\mathbb D}}|f_n(w)|
\rightarrow0.
\]
Since $u-u(\xi)$ and $\eta'$ are bounded on $\mathbb D$ and
$D_\eta f_n=(f_n\circ\eta)\eta'$, we have
\[
\begin{aligned}
&\int_{\mathbb D\backslash U}
|u(z)-u(\xi)|^2
|D_\eta f_n(z)|^2\,dA_\alpha(z)
\rightarrow0.
\end{aligned}
\]
Combining the preceding estimates we obtain
\[
M_{u-u(\xi)}D_\eta\in\mathcal K(A_\alpha^2).
\]

We next prove the second assertion. Let $\rho$ be the Krein adjoint of $\eta$. Since $\xi,\zeta\in\mathbb T$ and $\eta(\xi)=\zeta$, the reflection identity gives
\[
\rho^{-1}(\xi)
=\frac{1}{\overline{\eta(\xi)}}
=\eta(\xi)
=\zeta.
\]
Thus $\rho(\zeta)=\xi$. Moreover, $\rho$ is not an automorphism, so $\zeta$ is its unique boundary contact point.

By the Cowen--Hurst formula,
\[
D_\eta^*=M_gC_\rho M_w^*
\]
for some $g$ and $w$ analytic on a neighborhood of
$\overline{\mathbb D}$. Therefore,
\[
M_{v-v(\zeta)}D_\eta^*
=
M_{(v-v(\zeta))g}C_\rho M_w^*.
\]
Set $q=(v-v(\zeta))g$. Since $q(\zeta)=0$ and $\zeta$ is the unique
boundary contact point of $\rho$, the same localization argument as
above gives
\[
M_qC_\rho\in\mathcal K(A_\alpha^2).
\]
Since $M_w^*$ is bounded, it follows that
\[
M_{v-v(\zeta)}D_\eta^*
\in\mathcal K(A_\alpha^2).
\]
Taking adjoints gives
\[
D_\eta M_{v-v(\zeta)}^*
\in\mathcal K(A_\alpha^2).
\]

By \textup{(5.3)}, we have that
\[
M_uD_\eta M_v^*
\equiv
u(\xi)\overline{v(\zeta)}D_\eta
\pmod{\mathcal K(A_\alpha^2)}.
\]
Therefore,
\[
\|M_uD_\eta M_v^*\|_{e,A_\alpha^2}^2
=
|u(\xi)|^2|v(\zeta)|^2
\|D_\eta\|_{e,A_\alpha^2}^2.
\]
By Proposition~\ref{prop2.1} and Lemma~\ref{lem5.4},
\[
\|D_\eta\|_{e,A_\alpha^2}^2
=
\|C_\eta\|_{e,\mathcal D_\alpha}^2
=
|\eta'(\xi)|^{-\alpha}.
\]
Since $\zeta=\eta(\xi)$, we obtain
\[
\|M_uD_\eta M_v^*\|_{e,A_\alpha^2}^2
=
|u(\xi)|^2
|v(\eta(\xi))|^2
|\eta'(\xi)|^{-\alpha}.
\]

It remains to consider the case where $\eta$ is an automorphism.
Put $T=M_uD_\eta M_v^*$. A change of variables shows that
\[
D_\eta^*M_u^*M_uD_\eta=T_r,
\]
where
\[
r(w)
=
|u(\eta^{-1}(w))|^2
|(\eta^{-1})'(w)|^\alpha,
\qquad w\in\mathbb D,
\]
and $T_r$ denotes the Toeplitz operator on $A_\alpha^2$ with symbol
$r$. Indeed, for $f,g\in A_\alpha^2$,
\[
\langle D_\eta^*M_u^*M_uD_\eta f,g\rangle
=
\int_{\mathbb D}
r(w)f(w)\overline{g(w)}\,dA_\alpha(w).
\]
Since $r$ is continuous on $\overline{\mathbb D}$, by Lemma~\ref{lem5.6} we have
\[
T^*T
=
M_vT_rM_v^*
\equiv
T_{|v|^2r}
\pmod{\mathcal K(A_\alpha^2)}.
\]
Hence, by Lemma~\ref{lem5.5} we get that
\[
\begin{aligned}
\|T\|_{e,A_\alpha^2}^2
=
\|T^*T\|_{e,A_\alpha^2}
=
\max_{|w|=1}|v(w)|^2r(w).
\end{aligned}
\]
Since $\eta$ is an automorphism, writing $w=\eta(\xi)$ and using
$
|(\eta^{-1})'(\eta(\xi))|
=
|\eta'(\xi)|^{-1}
$,
we conclude that
\[
\|T\|_{e,A_\alpha^2}^2
=
\max_{\xi\in\mathbb T}
|u(\xi)|^2
|v(\eta(\xi))|^2
|\eta'(\xi)|^{-\alpha}.
\]
This completes the proof.
\end{proof}

We also record a consequence for products with analytic multipliers.

\begin{lemma}\label{lem5.8}
Let $\alpha>0$, let $\varphi$ and $\psi$ be nonconstant linear
fractional self-maps of $\mathbb D$, and let $h$ and $k$ be analytic
on a neighborhood of $\overline{\mathbb D}$. Then
\[
\|M_hC_\varphi C_\psi^*M_k^*\|_{e,\mathcal D_\alpha}^2
=
\max_{\substack{\xi,\zeta\in\mathbb T\\
\varphi(\xi)=\psi(\zeta)\in\mathbb T}}
|h(\xi)|^2|k(\zeta)|^2
\bigl(|\varphi'(\xi)|\,|\psi'(\zeta)|\bigr)^{-\alpha},
\]
where the maximum is understood to be zero when the indexing set is
empty.
\end{lemma}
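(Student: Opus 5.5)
The plan is to transport the operator $M_hC_\varphi C_\psi^*M_k^*$ to the derivative model on $A_\alpha^2$, modulo compact operators, and then apply Lemma~\ref{lem5.7} to the single symbol $\eta=\tau\circ\varphi$, where $\tau$ is the Krein adjoint of $\psi$.

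\emph{Step 1: compress to $\mathcal D_{\alpha,0}$.} Let $P_0$ be the orthogonal projection of $\mathcal D_\alpha$ onto $\mathcal D_{\alpha,0}$. Then $I-P_0$ has rank one. So each of $M_h$, $C_\varphi$, $C_\psi^*$, $M_k^*$ differs from its compression $P_0(\cdot)P_0$ by a finite rank operator, and the essential norm of the product equals that of $\widetilde M_h\widetilde C_\varphi\widetilde C_\psi^*\widetilde M_k^*$ on $\mathcal D_{\alpha,0}$. Here $\widetilde M_h=P_0M_h|_{\mathcal D_{\alpha,0}}$, and similarly for $\widetilde M_k$.

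\emph{Step 2: conjugate by $U_\alpha$.} For $g\in A_\alpha^2$,
\[
U_\alpha\widetilde M_hU_\alpha^*g=(hU_\alpha^*g)'=hg+h'\,U_\alpha^*g .
\]
The Volterra map $g\mapsto\int_0^z g$ is compact on $A_\alpha^2$: it gains a derivative, and $\mathcal D_\alpha$ embeds compactly into $A_\alpha^2$. Hence
\[
U_\alpha\widetilde M_hU_\alpha^*\equiv M_h
\quad\text{and}\quad
U_\alpha\widetilde M_k^*U_\alpha^*\equiv M_k^*
\pmod{\mathcal K(A_\alpha^2)}.
\]
By Proposition~\ref{prop2.1} and the factorization in the proof of Lemma~\ref{lem3.1}, the operator is then unitarily equivalent, modulo compacts, to
\[
M_hD_\varphi D_\psi^*M_k^*=M_{hu}D_{\eta}M_{kv}^*,
\]
where
\[
u=\overline\Delta^{-1}(-\overline b\varphi+\overline d)^{-\alpha},\qquad v=\Delta(cz+d)^\alpha .
\]
Both $hu$ and $kv$ are analytic on a neighborhood of $\overline{\mathbb D}$. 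Since $\alpha>0$, Lemma~\ref{lem5.7} gives
\[
\|M_hC_\varphi C_\psi^*M_k^*\|_{e,\mathcal D_\alpha}^2=\max_{\xi\in\mathbb T,\ |\eta(\xi)|=1}|h(\xi)u(\xi)|^2\,|k(\zeta)v(\zeta)|^2\,|\eta'(\xi)|^{-\alpha},\qquad \zeta=\eta(\xi).
\]
The maximum is zero if $\overline{\eta(\mathbb D)}\subset\mathbb D$.

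\emph{Step 3: translate the indexing set and the weights.} By the argument in the proof of Lemma~\ref{lem3.3}, $\eta(\xi)=\zeta\in\mathbb T$ holds exactly when $\omega:=\varphi(\xi)=\psi(\zeta)\in\mathbb T$. This gives a bijection between the two indexing sets, and the empty cases match. It remains to compute the weight. We have
\[
\eta'(\xi)=\tau'(\omega)\varphi'(\xi)=\overline\Delta(-\overline b\omega+\overline d)^{-2}\varphi'(\xi),
\qquad |\psi'(\zeta)|=\frac{|\Delta|}{|c\zeta+d|^2}.
\]
Substituting these, the factors $|-\overline b\omega+\overline d|^{\pm2\alpha}$ and $|\Delta|^{\pm2}$ cancel, leaving
\[
|u(\xi)|^2|v(\zeta)|^2|\eta'(\xi)|^{-\alpha}=|c\zeta+d|^{2\alpha}|\Delta|^{-\alpha}|\varphi'(\xi)|^{-\alpha}=\bigl(|\varphi'(\xi)||\psi'(\zeta)|\bigr)^{-\alpha}.
\]
This is the claimed formula.

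I expect Step 2 to be the main obstacle. Multiplication by $h$ does not preserve $\mathcal D_{\alpha,0}$, and adjoints taken in $\mathcal D_\alpha$ must be matched with adjoints in $A_\alpha^2$. The argument has to check that compressing by $P_0$ commutes with taking adjoints up to finite rank, and that the commutator-type error $h'\,U_\alpha^*$ is genuinely compact, including in the adjoint position.
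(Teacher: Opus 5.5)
Your proposal is correct and follows the paper's proof essentially step for step: reduce to $\mathcal D_{\alpha,0}$ modulo finite rank, conjugate by $U_\alpha$ with the compact Volterra error $h'\int_0^z g$, use the factorization $M_hD_\varphi D_\psi^*M_k^*=M_{hu}D_{\tau\circ\varphi}M_{kv}^*$ from Lemma~\ref{lem3.1}, apply Lemma~\ref{lem5.7} with $\eta=\tau\circ\varphi$, and translate the index set via Lemma~\ref{lem3.3}; your explicit weight computation supplies the step the paper only asserts. One small correction: $M_h$ does preserve $\mathcal D_{\alpha,0}$, since $(hf)(0)=h(0)f(0)=0$, so your compression $P_0M_hP_0$ is just a restriction, and the adjoint bookkeeping you worried about is automatic because conjugation by the unitary $U_\alpha$ commutes with taking adjoints.
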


\begin{proof}
Note that multiplication by an analytic function preserves
$\mathcal D_{\alpha,0}$. Under the unitary map $U_\alpha$, one has
\[
U_\alpha M_hU_\alpha^*g
=
hg+h'\int_0^z g(\omega)\,d\omega.
\]
The integration operator is compact on $A_\alpha^2$, since with respect
to the normalized monomial basis it is a weighted shift whose weights
converge to zero. Hence
$U_\alpha M_hU_\alpha^*\equiv M_h
\pmod{\mathcal K(A_\alpha^2)}$; the same holds for $k$.
Using the finite rank block decomposition and the derivative model,
we therefore obtain
\[
\|M_hC_\varphi C_\psi^*M_k^*\|_{e,\mathcal D_\alpha}
=
\|M_hD_\varphi D_\psi^*M_k^*\|_{e,A_\alpha^2}.
\]

Let $\tau$ be the Krein adjoint of $\psi$. As in the proof of
Lemma~\ref{lem3.1}, the Cowen--Hurst formula gives
\[
D_\varphi D_\psi^*
=
M_uD_{\tau\circ\varphi}M_v^*,
\]
where $u$ and $v$ are bounded invertible analytic multipliers. Using the same direct calculation as
in the proof of Lemma~\ref{lem3.1}, we have
\[
M_hD_\varphi D_\psi^*M_k^*
=
M_{hu}D_{\tau\circ\varphi}M_{kv}^*.
\]
Applying Lemma~\ref{lem5.7} with
$\eta=\tau\circ\varphi$ gives
\[
\begin{aligned}
&\|M_hD_\varphi D_\psi^*M_k^*\|_{e,A_\alpha^2}^2\\
&=
\max_{\substack{\xi\in\mathbb T\\
|(\tau\circ\varphi)(\xi)|=1}}
|h(\xi)u(\xi)|^2
|k((\tau\circ\varphi)(\xi))
v((\tau\circ\varphi)(\xi))|^2
|(\tau\circ\varphi)'(\xi)|^{-\alpha}.
\end{aligned}
\]
If $\zeta=(\tau\circ\varphi)(\xi)\in\mathbb T$, then
$\varphi(\xi)=\psi(\zeta)\in\mathbb T$. Moreover, the formulas for
$u$, $v$, and $\tau'$ obtained in the proof of
Lemma~\ref{lem3.1} yield
\[
|u(\xi)|^2|v(\zeta)|^2
|(\tau\circ\varphi)'(\xi)|^{-\alpha}
=
\bigl(|\varphi'(\xi)|\,|\psi'(\zeta)|\bigr)^{-\alpha}.
\]
Then the desired formula follows.
\end{proof}
\begin{remark}\label{rem5.9}
Relative to the decomposition
$\mathcal D_\alpha=\mathbb C\oplus\mathcal D_{\alpha,0}$, we write
$\widetilde M_h=M_h|_{\mathcal D_{\alpha,0}}$ and similarly for
$\widetilde M_k$. Then
\[
M_h=
\begin{pmatrix}
h(0)&0\vspace{2mm}\\
\Gamma_h&\widetilde M_h
\end{pmatrix},
\qquad
M_k^*=
\begin{pmatrix}
\overline{k(0)}&\Gamma_k^*\vspace{2mm}\\
0&\widetilde M_k^*
\end{pmatrix},
\]
where $\Gamma_h$ and $\Gamma_k$ have rank at most one. Moreover, by
Proposition~\ref{prop2.1},
\[
C_\varphi=
\begin{pmatrix}
I&\Lambda_{\varphi(0)}\vspace{2mm}\\
0&\widetilde C_\varphi
\end{pmatrix},
\qquad
C_\psi^*=
\begin{pmatrix}
I&0\vspace{2mm}\\
\Lambda_{\psi(0)}^*&\widetilde C_\psi^*
\end{pmatrix}.
\]
Consequently,
\[
M_hC_\varphi C_\psi^*M_k^*
=
\begin{pmatrix}
F_{11}&F_{12}\vspace{2mm}\\
F_{21}&
\widetilde M_h\widetilde C_\varphi
\widetilde C_\psi^*\widetilde M_k^*+F_{22}
\end{pmatrix},
\]
where each $F_{ij}$ has finite rank. Hence
\[
M_hC_\varphi C_\psi^*M_k^*
\equiv
\begin{pmatrix}
0&0\vspace{2mm}\\
0&
\widetilde M_h\widetilde C_\varphi
\widetilde C_\psi^*\widetilde M_k^*
\end{pmatrix}
\pmod{\mathcal K(\mathcal D_\alpha)}.
\]

Under the unitary map $U_\alpha$, we have
\[
U_\alpha\widetilde C_\varphi U_\alpha^*=D_\varphi,
\qquad
U_\alpha\widetilde C_\psi^*U_\alpha^*=D_\psi^*,
\]
and
\[
U_\alpha\widetilde M_hU_\alpha^*
\equiv M_h,
\qquad
U_\alpha\widetilde M_k^*U_\alpha^*
\equiv M_k^*
\pmod{\mathcal K(A_\alpha^2)}.
\]
Therefore,
\[
\|M_hC_\varphi C_\psi^*M_k^*\|_{e,\mathcal D_\alpha}
=
\|M_hD_\varphi D_\psi^*M_k^*\|_{e,A_\alpha^2}.
\]
\end{remark}

The next result is a direct consequence of Lemma \ref{lem5.8}.

\begin{theorem}\label{thm5.10}
Let $\alpha>0$, and let $\varphi$ and $\psi$ be nonconstant linear
fractional self-maps of $\mathbb D$. Then
\[
\|C_\varphi C_\psi^*\|_{e,\mathcal D_\alpha}^2
=
\max_{\substack{\xi,\zeta\in\mathbb T\\
\varphi(\xi)=\psi(\zeta)\in\mathbb T}}
\bigl(|\varphi'(\xi)|\,|\psi'(\zeta)|\bigr)^{-\alpha},
\]
where the maximum is understood to be zero when
$\varphi(\mathbb T)\cap\psi(\mathbb T)\cap\mathbb T=\varnothing$.
\end{theorem}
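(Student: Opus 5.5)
Theorem~\ref{thm5.10} is the special case $h\equiv k\equiv1$ of Lemma~\ref{lem5.8}, and the proof will simply verify that this specialization is legitimate. The constant functions $h=k=1$ are analytic on a neighborhood of $\overline{\mathbb D}$, so Lemma~\ref{lem5.8} applies with $M_h=M_k=I$. It gives
\[
\|C_\varphi C_\psi^*\|_{e,\mathcal D_\alpha}^2
=
\max_{\substack{\xi,\zeta\in\mathbb T\\
\varphi(\xi)=\psi(\zeta)\in\mathbb T}}
\bigl(|\varphi'(\xi)|\,|\psi'(\zeta)|\bigr)^{-\alpha},
\]
since $|h(\xi)|^2|k(\zeta)|^2=1$.

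Two further points need checking.

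\emph{The empty indexing set.} The set of pairs $(\xi,\zeta)\in\mathbb T^2$ with $\varphi(\xi)=\psi(\zeta)\in\mathbb T$ is empty exactly when $\varphi(\mathbb T)\cap\psi(\mathbb T)\cap\mathbb T=\varnothing$. In that case the convention in Lemma~\ref{lem5.8} gives the value zero, matching the statement. This is also consistent with Theorem~\ref{thm3.4}, since under this condition the product is compact, indeed Schatten.

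\emph{Attainment of the maximum.} The indexing set is a closed subset of $\mathbb T^2$: it is cut out by the continuous conditions $\varphi(\xi)=\psi(\zeta)$ and $|\varphi(\xi)|=1$, and linear fractional self-maps extend continuously to $\overline{\mathbb D}$. Hence the set is compact. The function $(|\varphi'(\xi)||\psi'(\zeta)|)^{-\alpha}$ is continuous and finite on it, because $\varphi'$ and $\psi'$ do not vanish on $\overline{\mathbb D}$. So the maximum is attained. In fact, by the reduction to $\eta=\tau\circ\varphi$ used in Lemma~\ref{lem5.8}, the set contains at most one pair unless $\eta$ is an automorphism.

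There is no real obstacle, because all the substantive work is already done in Lemmas~\ref{lem5.7} and~\ref{lem5.8} together with the Cowen--Hurst factorization. The one step to double-check is the identity quoted at the end of the proof of Lemma~\ref{lem5.8},
\[
|u(\xi)|^2|v(\zeta)|^2|(\tau\circ\varphi)'(\xi)|^{-\alpha}=(|\varphi'(\xi)||\psi'(\zeta)|)^{-\alpha}.
\]
Here $u=\overline\Delta^{-1}(-\overline b\varphi+\overline d)^{-\alpha}$, $v=\Delta(cz+d)^\alpha$, $\tau'(w)=\overline\Delta(-\overline b w+\overline d)^{-2}$ and $\psi'(\zeta)=\Delta(c\zeta+d)^{-2}$. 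The verification multiplies these out and uses $\tau(\varphi(\xi))=\zeta$, $\varphi(\xi)=\psi(\zeta)$ and $|\tau'(\psi(\zeta))\psi'(\zeta)|=1$, the last because $\tau\circ\psi$ fixes $\zeta$ with unimodular derivative by the reflection identity (3.3). I would confirm this factor bookkeeping, especially the powers of $|\Delta|$, which must cancel.
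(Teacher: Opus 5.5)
Your proposal is correct and is exactly the paper's proof: Theorem~\ref{thm5.10} is obtained by setting $h=k=1$ in Lemma~\ref{lem5.8}. Your extra checks are harmless, though the factor identity you plan to verify is purely algebraic and needs none of the boundary relations: the $(-\overline b\varphi(\xi)+\overline d)$ factors from $u$ and $\tau'\circ\varphi$ cancel, leaving $|\Delta|^{-\alpha}|c\zeta+d|^{2\alpha}|\varphi'(\xi)|^{-\alpha}=\bigl(|\varphi'(\xi)|\,|\psi'(\zeta)|\bigr)^{-\alpha}$.
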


\begin{proof}
This is the special case $h=k=1$ of Lemma~\ref{lem5.8}.
\end{proof}

\begin{theorem}\label{thm5.11}
Let $\alpha>0$, and let $\varphi$ and $\psi$ be nonconstant linear
fractional self-maps of $\mathbb D$. Then
\[
\|C_\psi^*C_\varphi\|_{e,\mathcal D_\alpha}^2
=
\max_{\substack{\lambda\in\mathbb T\\
|\varphi(\lambda)|=|\psi(\lambda)|=1}}
\bigl(
|\varphi'(\lambda)|\,|\psi'(\lambda)|
\bigr)^{-\alpha},
\]
where the maximum is understood to be zero when
$
\varphi^{-1}(\mathbb T)
\cap
\psi^{-1}(\mathbb T)
\cap
\mathbb T
=
\varnothing.
$
\end{theorem}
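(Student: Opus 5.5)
The plan is to reduce $C_\psi^*C_\varphi$ modulo compacts to an operator of the form $M_uD_\eta M_v^*$ on $A_\alpha^2$ with $\eta=\varphi\circ\tau$, and then invoke Lemma~\ref{lem5.7}. This mirrors the reduction in Lemma~\ref{lem4.3}.

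\textbf{Step 1: reduction on $\mathcal D_\alpha$.} Let $\tau$ be the Krein adjoint of $\psi$ and $\sigma$ that of $\varphi$. By Lemma~\ref{lem4.1} and its adjoint, and then Lemma~\ref{lem4.2},
\[
C_\psi^*C_\varphi\equiv M_{g(H\circ\tau)}\,C_\tau C_\sigma^*\,M_{h\circ\sigma}^*M_G^*
\pmod{\mathcal K(\mathcal D_\alpha)}.
\]
All four multipliers are analytic on a neighborhood of $\overline{\mathbb D}$, since $\tau,\sigma$ are linear fractional maps that are analytic there and map $\overline{\mathbb D}$ into $\overline{\mathbb D}$. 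Put $h_1=g(H\circ\tau)$ and $k_1=G(h\circ\sigma)$, so the right side is $M_{h_1}C_\tau C_\sigma^*M_{k_1}^*$, up to reordering $M_G^*$ with $M_{h\circ\sigma}^*$, which commute.

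\textbf{Step 2: apply Lemma~\ref{lem5.8}.} Since essential norms are invariant modulo compacts, Lemma~\ref{lem5.8}, applied with the pair $(\tau,\sigma)$ in place of $(\varphi,\psi)$, gives
\[
\|C_\psi^*C_\varphi\|_e^2=\max_{\tau(\xi)=\sigma(\zeta)\in\mathbb T}|h_1(\xi)|^2|k_1(\zeta)|^2\bigl(|\tau'(\xi)||\sigma'(\zeta)|\bigr)^{-\alpha}.
\]
The indexing set is translated exactly as in Lemma~\ref{lem4.5}. If $\tau(\xi)=\lambda\in\mathbb T$, then $\xi=1/\overline{\psi(\lambda)}$ forces $\xi=\psi(\lambda)\in\mathbb T$. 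Likewise $\sigma(\zeta)=\lambda$ forces $\zeta=\varphi(\lambda)\in\mathbb T$. So the admissible pairs $(\xi,\zeta)$ correspond bijectively to the points $\lambda\in\mathbb T$ with $|\varphi(\lambda)|=|\psi(\lambda)|=1$, via $\xi=\psi(\lambda)$ and $\zeta=\varphi(\lambda)$. When there is no such $\lambda$, both sides vanish by Proposition~\ref{prop4.6}.

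\textbf{Step 3: evaluate the weight at each admissible $\lambda$.} At such a point we have $\tau(\psi(\lambda))=\lambda$, hence $|\tau'(\xi)|=|\psi'(\lambda)|^{-1}$. This follows from the reflection identity $\tau^{-1}=\delta\circ\widetilde\psi\circ\delta$: differentiating it at the unimodular point gives $|(\tau^{-1})'(\lambda)|=|\psi'(\lambda)|$. Similarly $|\sigma'(\zeta)|=|\varphi'(\lambda)|^{-1}$.

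It remains to check that the multiplier moduli contribute
\[
|h_1(\xi)|^2|k_1(\zeta)|^2=\bigl(|\varphi'(\lambda)||\psi'(\lambda)|\bigr)^{-2\alpha}.
\]
Combined with the factor $\bigl(|\psi'(\lambda)|^{-1}|\varphi'(\lambda)|^{-1}\bigr)^{-\alpha}$ from the derivatives, this yields the claimed $\bigl(|\varphi'(\lambda)||\psi'(\lambda)|\bigr)^{-\alpha}$.

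\textbf{Main obstacle.} This bookkeeping is the step I expect to be hardest. It needs the explicit forms $g=(-\overline b z+\overline d)^{-\alpha}$ and $h=(cz+d)^\alpha$ for $\psi$, and $G,H$ the analogous functions for $\varphi$. It also needs the identities on $\mathbb T$ relating $|cz+d|$ at $\lambda$ with $|-\overline b\xi+\overline d|$ at $\xi=\psi(\lambda)$, through $\psi'=\Delta/(cz+d)^2$ and $\tau'=\overline\Delta/(-\overline bz+\overline d)^2$. I would first try to verify $|g(\psi(\lambda))|\,|h(\lambda)|^{-1}$-type identities directly, using $|\Delta|=|c\lambda+d|\,|-\overline b\psi(\lambda)+\overline d|$ for $\lambda\in\mathbb T$ with $\psi(\lambda)\in\mathbb T$. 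This identity follows from $\tau(\psi(\lambda))=\lambda$ and the chain rule.

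If the constants misbehave, the fallback is a different route. Convert $C_\tau C_\sigma^*$ to $D_\tau D_\sigma^*=M_{\widetilde u}D_{\varphi\circ\tau}M_{\widetilde v}^*$ as in Lemma~\ref{lem4.3}, then apply Lemma~\ref{lem5.7} with $\eta=\varphi\circ\tau$, whose contact points are exactly $\xi=\psi(\lambda)$. Here $|(\varphi\circ\tau)'(\xi)|=|\varphi'(\lambda)|/|\psi'(\lambda)|$, and the multiplier values are again computed on $\mathbb T$.
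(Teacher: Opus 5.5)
Your proposal is correct and is essentially the paper's proof. It uses the same reduction modulo compacts to $M_{g(H\circ\tau)}C_\tau C_\sigma^*M_{G(h\circ\sigma)}^*$ via Lemmas~\ref{lem4.1} and \ref{lem4.2}, the same application of Lemma~\ref{lem5.8} to $(\tau,\sigma,\omega_1,\omega_2)$, the same correspondence $\xi=\psi(\lambda)$, $\zeta=\varphi(\lambda)$, and the same evaluation of the multipliers, where $|\tau'(\psi(\lambda))|=|\psi'(\lambda)|^{-1}$ (obtained from the reflection identity, since $\tau\circ\psi$ is not the identity) combined with the explicit formulas for $\psi'$ and $\tau'$ yields $|\Delta|=|c\lambda+d|\,|-\overline b\psi(\lambda)+\overline d|$, hence $|g(\psi(\lambda))h(\lambda)|=|\psi'(\lambda)|^{-\alpha}$ and likewise $|G(\varphi(\lambda))H(\lambda)|=|\varphi'(\lambda)|^{-\alpha}$.
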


\begin{proof}
Let $\tau$ and $\sigma$ be the Krein adjoints of $\psi$ and $\varphi$, respectively. Recall from Lemma~\ref{lem4.1} that $C_\psi^*\equiv M_gC_\tau M_h^*$ and $C_\varphi\equiv M_HC_\sigma^*M_G^* \pmod{\mathcal K(\mathcal D_\alpha)}$. By Lemma~\ref{lem4.3} we have
\[
C_\psi^*C_\varphi
\equiv
M_{g(H\circ\tau)}
C_\tau C_\sigma^*
M_{G(h\circ\sigma)}^*
\pmod{\mathcal K(\mathcal D_\alpha)}.
\]
Set
\[
\omega_1=g(H\circ\tau) \ \ \ \ \text{and}\ \ \ \
\omega_2=G(h\circ\sigma).
\]
Applying Lemma~\ref{lem5.8} with
$(\varphi,\psi,h,k)=(\tau,\sigma,\omega_1,\omega_2)$, we obtain
\[
\|C_\psi^*C_\varphi\|_{e,\mathcal D_\alpha}^2
=
\max_{\substack{\xi,\zeta\in\mathbb T\\
\tau(\xi)=\sigma(\zeta)\in\mathbb T}}
|\omega_1(\xi)|^2|\omega_2(\zeta)|^2
\bigl(
|\tau'(\xi)|\,|\sigma'(\zeta)|
\bigr)^{-\alpha}.\tag{5.4}
\]

Suppose that
\[
\tau(\xi)=\sigma(\zeta)=\lambda\in\mathbb T.
\]
By the boundary correspondence between a linear fractional map and
its Krein adjoint, we have
\[
\xi=\psi(\lambda)\ \ \ \ \text{and}\ \ \ \
\zeta=\varphi(\lambda),
\]
and hence
\[
|\psi(\lambda)|=|\varphi(\lambda)|=1.
\]
Conversely, every
$\lambda\in\mathbb T$ satisfying
$|\varphi(\lambda)|=|\psi(\lambda)|=1$
gives such a pair
\[
\xi=\psi(\lambda)\ \ \ \ \mathrm{and}\ \ \ \
\zeta=\varphi(\lambda).
\]
Thus the indexing sets in the two maxima are equivalent.

Moreover, the linear fractional formulas give
\[
|\tau'(\psi(\lambda))|
=
|\psi'(\lambda)|^{-1},
\qquad
|\sigma'(\varphi(\lambda))|
=
|\varphi'(\lambda)|^{-1}.
\]
Since
$\tau(\psi(\lambda))=\sigma(\varphi(\lambda))=\lambda$, we also have
\[
\begin{aligned}
|\omega_1(\psi(\lambda))|^2
|\omega_2(\varphi(\lambda))|^2
&=
|g(\psi(\lambda))H(\lambda)|^2
|G(\varphi(\lambda))h(\lambda)|^2\\
&=
|g(\psi(\lambda))h(\lambda)|^2
|G(\varphi(\lambda))H(\lambda)|^2.
\end{aligned}
\]
Using the definitions of $g,h,G$, and $H$ from
Lemma~\ref{lem4.1}, we obtain
\[
|g(\psi(\lambda))h(\lambda)|^2
=
|\psi'(\lambda)|^{-2\alpha}
\]
and
\[
|G(\varphi(\lambda))H(\lambda)|^2
=
|\varphi'(\lambda)|^{-2\alpha}.
\]
Consequently,
\[
\begin{aligned}
&|\omega_1(\psi(\lambda))|^2
|\omega_2(\varphi(\lambda))|^2
\bigl(
|\tau'(\psi(\lambda))|
|\sigma'(\varphi(\lambda))|
\bigr)^{-\alpha}
\\
&=
|\psi'(\lambda)|^{-2\alpha}
|\varphi'(\lambda)|^{-2\alpha}
\bigl(
|\psi'(\lambda)|^{-1}
|\varphi'(\lambda)|^{-1}
\bigr)^{-\alpha}
\\
&=
\bigl(
|\varphi'(\lambda)|\,|\psi'(\lambda)|
\bigr)^{-\alpha}.
\end{aligned}
\]
Substituting this identity into the formula \textup{(5.4)} proves Theorem~\ref{thm5.11}.
\end{proof}

We finally consider the remaining range $-1<\alpha<0$. In this case,
we use a reduction to a positive Toeplitz operator to obtain a two-sided
estimate for the essential norm.

\begin{lemma}\label{lem5.12}
Let $-1<\alpha<0$, and let $\eta$ be a nonconstant linear fractional
self-map of $\mathbb D$. For every fixed $r>0$,
\[
\|D_\eta\|_{e,A_\alpha^2}^2
=
\|T_{\widehat\mu_{\eta,\alpha}}\|_{e,A_\alpha^2}
\asymp
\limsup_{|z|\to1^-}
\frac{
\displaystyle
\int_{D(z,r)\cap\eta(\mathbb D)}
N_{\eta,\alpha}(w)\,dA(w)
}{
A_\alpha(D(z,r))
},
\]
where $D(z,r)$ is the hyperbolic disk (see Section 2 if necessary) and
\[
d\widehat\mu_{\eta,\alpha}(w)
=
\frac{N_{\eta,\alpha}(w)}{(1-|w|^2)^\alpha}\,dA_\alpha(w).
\]
Here, $T_{\widehat\mu_{\eta,\alpha}}$ denotes the positive Toeplitz
operator determined by
\[
\langle T_{\widehat\mu_{\eta,\alpha}}f,g\rangle_{A_\alpha^2}
=
\int_{\mathbb D}f(w)\overline{g(w)}\,
 d\widehat\mu_{\eta,\alpha}(w).
\]
\end{lemma}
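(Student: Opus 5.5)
We first establish the identity $D_\eta^*D_\eta=T_{\widehat\mu_{\eta,\alpha}}$ on $A_\alpha^2$, then take essential norms via the $C^*$-identity, and finally apply Yamaji's estimate from Section~2.

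\textbf{The operator identity.} For $f,g\in A_\alpha^2$ we have
\[
\langle D_\eta f,D_\eta g\rangle_{A_\alpha^2}
=
(1+\alpha)\int_{\mathbb D}f(\eta(z))\overline{g(\eta(z))}\,|\eta'(z)|^2(1-|z|^2)^\alpha\,dA(z).
\]
Since $\eta$ is univalent, the change of variables $w=\eta(z)$ turns this into
\[
(1+\alpha)\int_{\eta(\mathbb D)}f(w)\overline{g(w)}\,N_{\eta,\alpha}(w)\,dA(w)
=
\int_{\mathbb D}f\overline g\,d\widehat\mu_{\eta,\alpha},
\]
using $N_{\eta,\alpha}(w)=(1-|\eta^{-1}(w)|^2)^\alpha$ on $\eta(\mathbb D)$ and $0$ elsewhere. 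In the general (possibly non-univalent) formulation one would use the area formula with multiplicity, but linear fractional maps avoid this. Boundedness of $D_\eta$ (Section~2) shows that $\widehat\mu_{\eta,\alpha}$ is a Carleson measure for $A_\alpha^2$. Hence the sesquilinear form defines a bounded positive Toeplitz operator, and $D_\eta^*D_\eta=T_{\widehat\mu_{\eta,\alpha}}$.

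\textbf{Passing to essential norms.} The Calkin algebra identity $\|T\|_e^2=\|T^*T\|_e$ from Section~2 gives
\[
\|D_\eta\|_{e,A_\alpha^2}^2=\|T_{\widehat\mu_{\eta,\alpha}}\|_{e,A_\alpha^2}.
\]

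\textbf{The local-average estimate.} We apply the consequence of Yamaji's theorem recorded in Section~2, valid for every fixed $r>0$:
\[
\|T_{\widehat\mu_{\eta,\alpha}}\|_{e}\asymp\limsup_{|z|\to1^-}\frac{\widehat\mu_{\eta,\alpha}(D(z,r))}{A_\alpha(D(z,r))}.
\]
It remains to compute the numerator:
\[
\widehat\mu_{\eta,\alpha}(D(z,r))=(1+\alpha)\int_{D(z,r)\cap\eta(\mathbb D)}N_{\eta,\alpha}(w)\,dA(w).
\]
The constant $1+\alpha$ is absorbed into the two-sided constants, which are independent of $z$.

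\textbf{Main obstacle.} The one step that needs care is checking that Yamaji's hypotheses apply to $\widehat\mu_{\eta,\alpha}$ in the range $-1<\alpha<0$. This requires $\widehat\mu_{\eta,\alpha}$ to be a positive Borel measure with bounded $T_\mu$, which follows from the identity above. It also requires the normalization $\gamma=\alpha/2$ noted in Section~2, which covers negative $\gamma>-1/2$.

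The weight $N_{\eta,\alpha}$ may be unbounded near the boundary when $\alpha<0$, because $(1-|\eta^{-1}(w)|^2)^\alpha$ blows up. This is harmless: the measure is finite on compact sets, and its Carleson property comes from the boundedness of $D_\eta$ rather than from any pointwise bound on the weight.
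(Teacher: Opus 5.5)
Your proposal is correct and follows essentially the same route as the paper. You use the change of variables to identify $D_\eta^*D_\eta$ with $T_{\widehat\mu_{\eta,\alpha}}$ (where $d\widehat\mu_{\eta,\alpha}=(1+\alpha)N_{\eta,\alpha}\,dA$), then the $C^*$-identity in the Calkin algebra, and then Yamaji's two-sided local-average estimate, with the factor $1+\alpha$ absorbed into the constants.
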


\begin{proof}
Since
\[
dA_\alpha(w)
=
(1+\alpha)(1-|w|^2)^\alpha\,dA(w),
\]
we have
\[
d\widehat\mu_{\eta,\alpha}(w)
=
(1+\alpha)N_{\eta,\alpha}(w)\,dA(w).
\]
For $f,g\in A_\alpha^2$, we have
\[
\begin{aligned}
\langle D_\eta^*D_\eta f,g\rangle_{A_\alpha^2}
&=
(1+\alpha)
\int_{\mathbb D}
f(w)\overline{g(w)}N_{\eta,\alpha}(w)\,dA(w)\\
&=
\int_{\mathbb D}
f(w)\overline{g(w)}\,d\widehat\mu_{\eta,\alpha}(w).
\end{aligned}
\]
Hence
\[
D_\eta^*D_\eta=T_{\widehat\mu_{\eta,\alpha}}.
\]
By the $C^*$-identity and the fact that $\pi$ is a $*$-homomorphism,
\begin{align*}
\|D_\eta\|_e^2
&=
\|\pi(D_\eta)\|^2
=
\|\pi(D_\eta)^*\pi(D_\eta)\|\\
&=
\|\pi(D_\eta^*D_\eta)\|
=
\|D_\eta^*D_\eta\|_e
=
\|T_{\widehat{\mu}_{\eta,\alpha}}\|_e.
\end{align*}

Set $\gamma=\alpha/2$. Then $\gamma>-1/2$ and
$dA_\alpha=(1+\alpha)dV_\gamma$, where
$dV_\gamma(w)=(1-|w|^2)^{2\gamma}dA(w)$. Applying the essential-norm
estimate for positive Toeplitz operators in \cite[Theorem~B]{Yamaji2013}
gives
\[
\|T_{\widehat\mu_{\eta,\alpha}}\|_{e,A_\alpha^2}
\asymp
\limsup_{|z|\to1^-}
\frac{
\displaystyle
\int_{D(z,r)\cap\eta(\mathbb D)}
N_{\eta,\alpha}(w)\,dA(w)
}{
A_\alpha(D(z,r))
}
\]
for every fixed $r>0$.
This proves the lemma.
\end{proof}

\begin{theorem}\label{thm5.13}
Let $-1<\alpha<0$, and let $\varphi$ and $\psi$ be nonconstant linear
fractional self-maps of $\mathbb D$. Let $\tau$ be the Krein adjoint of
$\psi$. Then
\[
\|C_\varphi C_\psi^*\|_{e,\mathcal D_\alpha}^2
\asymp
\limsup_{|z|\to1^-}
\frac{
\displaystyle
\int_{D(z,r)\cap(\tau\circ\varphi)(\mathbb D)}
N_{\tau\circ\varphi,\alpha}(w)\,dA(w)
}{
A_\alpha(D(z,r))
}
\]
for every fixed $r>0$.
\end{theorem}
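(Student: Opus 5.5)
The plan is to transfer the problem to the derivative model and then apply Lemma~\ref{lem5.12} to the single symbol $\eta=\tau\circ\varphi$. First, relative to $\mathcal D_\alpha=\mathbb C\oplus\mathcal D_{\alpha,0}$, the block decomposition from the proof of Lemma~\ref{lem3.1} shows that $C_\varphi C_\psi^*$ and $0\oplus\widetilde C_\varphi\widetilde C_\psi^*$ differ by a finite rank operator. Since finite rank perturbations do not change the essential norm, we get
\[
\|C_\varphi C_\psi^*\|_{e,\mathcal D_\alpha}
=
\|\widetilde C_\varphi\widetilde C_\psi^*\|_{e,\mathcal D_{\alpha,0}}
=
\|D_\varphi D_\psi^*\|_{e,A_\alpha^2}.
\]
The second equality uses the unitary equivalence $U_\alpha\widetilde C_\varphi\widetilde C_\psi^*U_\alpha^*=D_\varphi D_\psi^*$ of Proposition~\ref{prop2.1}, exactly as in Lemma~\ref{lem3.1}.

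Next, we invoke the exact factorization obtained in the proof of Lemma~\ref{lem3.1} from the Cowen--Hurst formula (Theorem~\ref{thm2.4}):
\[
D_\varphi D_\psi^*=M_uD_{\tau\circ\varphi}M_v^*,
\]
with $u=\overline\Delta^{-1}(-\overline b\varphi+\overline d)^{-\alpha}$ and $v=\Delta(cz+d)^\alpha$. Both functions and their reciprocals lie in $H^\infty$, so $M_u$ and $M_v^*$ are bounded and invertible on $A_\alpha^2$. The ideal estimate $\|ATB\|_e\le\|A\|\,\|T\|_e\,\|B\|$, applied once to $M_uD_{\tau\circ\varphi}M_v^*$ and once to $D_{\tau\circ\varphi}=M_u^{-1}(M_uD_{\tau\circ\varphi}M_v^*)(M_v^*)^{-1}$, then gives
\[
\|D_\varphi D_\psi^*\|_{e,A_\alpha^2}\asymp\|D_{\tau\circ\varphi}\|_{e,A_\alpha^2}.
\]
The constants depend only on $\|u^{\pm1}\|_\infty$ and $\|v^{\pm1}\|_\infty$, so they are independent of $z$ and $r$.

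Finally, $\tau\circ\varphi$ is a nonconstant linear fractional self-map of $\mathbb D$: it is a self-map by the inclusion in the proof of Lemma~\ref{lem3.3}, and it is nonconstant because it is a composition of Möbius maps. Squaring the previous equivalence and applying Lemma~\ref{lem5.12} with $\eta=\tau\circ\varphi$ yields, for each fixed $r>0$,
\[
\|C_\varphi C_\psi^*\|_{e,\mathcal D_\alpha}^2
\asymp
\limsup_{|z|\to1^-}
\frac{\int_{D(z,r)\cap(\tau\circ\varphi)(\mathbb D)}N_{\tau\circ\varphi,\alpha}(w)\,dA(w)}{A_\alpha(D(z,r))}.
\]
This is the assertion.

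The main point requiring care is that the two-sided comparison constants stay uniform. The multipliers $u$ and $v$ are fixed by $\varphi$ and $\psi$, and Yamaji's constants depend only on $\alpha$ and $r$, so the only thing to check is that the branches of the powers in $g$ and $h$ are analytic and zero-free on a neighborhood of $\overline{\mathbb D}$, as noted after Theorem~\ref{thm2.4}. This guarantees that $u^{\pm1}$ and $v^{\pm1}$ are bounded. No additional compactness argument is needed, since in this case the factorization holds exactly rather than modulo compact operators.
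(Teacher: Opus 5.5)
Your proposal is correct and follows the paper's proof essentially step for step. You use the finite-rank block decomposition and the unitary $U_\alpha$ to reduce to $\|D_\varphi D_\psi^*\|_{e,A_\alpha^2}$, then the exact Cowen--Hurst factorization $D_\varphi D_\psi^*=M_uD_{\tau\circ\varphi}M_v^*$ with invertible multipliers to get $\|D_\varphi D_\psi^*\|_{e,A_\alpha^2}\asymp\|D_{\tau\circ\varphi}\|_{e,A_\alpha^2}$, and finally Lemma~\ref{lem5.12} with $\eta=\tau\circ\varphi$; your added checks (that $\tau\circ\varphi$ is a nonconstant linear fractional self-map and that the constants do not depend on $z$) are accurate details the paper leaves implicit.
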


\begin{proof}
By the finite-rank block decomposition and the derivative model,
\[
\|C_\varphi C_\psi^*\|_{e,\mathcal D_\alpha}
=
\|D_\varphi D_\psi^*\|_{e,A_\alpha^2}.
\]
As in the proof of Lemma~\ref{lem3.1},
\[
D_\varphi D_\psi^*
=
M_uD_{\tau\circ\varphi}M_v^*,
\]
where $u$ and $v$ are bounded invertible analytic multipliers. Hence
\[
\|C_\varphi C_\psi^*\|_{e,\mathcal D_\alpha}
=
\|D_\varphi D_\psi^*\|_{e,A_\alpha^2}
=
\|M_uD_{\tau\circ\varphi}M_v^*\|_{e,A_\alpha^2}
\asymp
\|D_{\tau\circ\varphi}\|_{e,A_\alpha^2}.
\]
Applying Lemma~\ref{lem5.12} with $\eta=\tau\circ\varphi$ gives the desired
result.
\end{proof}

We conclude this section with the following theorem concerning estimates for the essential norm of $C_\psi^*C_\varphi$.

\begin{theorem}\label{thm5.14}
Let $-1<\alpha<0$, and let $\varphi$ and $\psi$ be nonconstant linear
fractional self-maps of $\mathbb D$. Let $\tau$ be the Krein adjoint of
$\psi$. Then
\[
\|C_\psi^*C_\varphi\|_{e,\mathcal D_\alpha}^2
\asymp
\limsup_{|z|\to1^-}
\frac{
\displaystyle
\int_{D(z,r)\cap(\varphi\circ\tau)(\mathbb D)}
N_{\varphi\circ\tau,\alpha}(w)\,dA(w)
}{
A_\alpha(D(z,r))
}
\]
for every fixed $r>0$.
\end{theorem}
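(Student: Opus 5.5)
The plan is to mimic the proof of Theorem~\ref{thm5.13}, with Lemma~\ref{lem4.3} replacing Lemma~\ref{lem3.1}, and then apply Lemma~\ref{lem5.12}. Let $\sigma$ be the Krein adjoint of $\varphi$. Lemma~\ref{lem4.1} and Lemma~\ref{lem4.2} give, exactly as in the first part of the proof of Lemma~\ref{lem4.3},
\[
C_\psi^*C_\varphi
\equiv
M_{\omega_1}C_\tau C_\sigma^*M_{\omega_2}^*
\pmod{\mathcal K(\mathcal D_\alpha)},
\]
where $\omega_1=g(H\circ\tau)$ and $\omega_2=G(h\circ\sigma)$. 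Both functions are analytic on a neighborhood of $\overline{\mathbb D}$, have no zeros there, and so $M_{\omega_1},M_{\omega_2}$ are bounded and invertible on $\mathcal D_\alpha$. Since compact perturbations do not change essential norms, the ideal estimate from Section~2 gives
\[
\|C_\psi^*C_\varphi\|_{e,\mathcal D_\alpha}\asymp\|C_\tau C_\sigma^*\|_{e,\mathcal D_\alpha}.
\]

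Next I pass to the derivative model. By the finite rank block decomposition of $C_\tau C_\sigma^*$ (as in the proof of Lemma~\ref{lem3.1}) and the unitary $U_\alpha$ of Proposition~\ref{prop2.1},
\[
\|C_\tau C_\sigma^*\|_{e,\mathcal D_\alpha}=\|D_\tau D_\sigma^*\|_{e,A_\alpha^2}.
\]
The second half of the proof of Lemma~\ref{lem4.3} supplies the exact factorization $D_\tau D_\sigma^*=M_{\widetilde u}D_{\widehat\sigma\circ\tau}M_{\widetilde v}^*$. Here $M_{\widetilde u},M_{\widetilde v}$ are bounded and invertible on $A_\alpha^2$, and $\widehat\sigma=\varphi$. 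The ideal estimate therefore yields $\|D_\tau D_\sigma^*\|_{e,A_\alpha^2}\asymp\|D_{\varphi\circ\tau}\|_{e,A_\alpha^2}$.

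Finally, I square and apply Lemma~\ref{lem5.12} with $\eta=\varphi\circ\tau$, which is a nonconstant linear fractional self-map of $\mathbb D$. For each fixed $r>0$ this gives the stated two-sided estimate. The comparison constants come from the norms of the invertible multipliers and their inverses, and from Yamaji's constants, so they are independent of $z$.

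The main point needing care is the first step. The Cowen-type formula on $\mathcal D_\alpha$ holds only modulo compacts, so I must check that the commutator swap of Lemma~\ref{lem4.2} and the identity $C_\tau M_H=M_{H\circ\tau}C_\tau$ introduce only compact errors. Both facts are already established in Lemma~\ref{lem4.3}. I also need to confirm that each multiplier symbol is nonvanishing on $\overline{\mathbb D}$, which follows because $cz+d$ and $-\overline bz+\overline d$ have no zeros there. Once these are verified, the remaining steps are formal.
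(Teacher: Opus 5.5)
Your proposal is correct and follows the paper's proof. Both use the same reduction $C_\psi^*C_\varphi\equiv M_{\omega_1}C_\tau C_\sigma^*M_{\omega_2}^*$ modulo compacts from Lemma~\ref{lem4.3}, the derivative model, the exact factorization $D_\tau D_\sigma^*=M_{\widetilde u}D_{\varphi\circ\tau}M_{\widetilde v}^*$, and Lemma~\ref{lem5.12}. The only difference is that you remove the invertible multipliers $M_{\omega_1},M_{\omega_2}$ on $\mathcal D_\alpha$ before passing to $A_\alpha^2$, which avoids the step $U_\alpha\widetilde M_hU_\alpha^*\equiv M_h$ of Remark~\ref{rem5.9}.
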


\begin{proof}
Let $\sigma$ be the Krein adjoint of $\varphi$. With the notation used
in the proof of Lemma~\ref{lem4.3},
\[
C_\psi^*C_\varphi
\equiv
M_{\omega_1}C_\tau C_\sigma^*M_{\omega_2}^*
\pmod{\mathcal K(\mathcal D_\alpha)},
\]
where $\omega_1$ and $\omega_2$ are bounded invertible analytic
multipliers. Using the finite rank block decomposition and the derivative
model as in Remark~\ref{rem5.9}, we obtain that
\[
\|C_\psi^*C_\varphi\|_{e,\mathcal D_\alpha}
=
\|M_{\omega_1}D_\tau D_\sigma^*M_{\omega_2}^*\|_{e,A_\alpha^2}.
\]
As in the proof of Lemma~\ref{lem4.3}, we have
\[
D_\tau D_\sigma^*
=
M_{\widetilde u}
D_{\varphi\circ\tau}
M_{\widetilde v}^*,
\]
where $\widetilde u$ and $\widetilde v$ are bounded invertible analytic
multipliers. Therefore,
\[
M_{\omega_1}D_\tau D_\sigma^*M_{\omega_2}^*
=
M_{\omega_1\widetilde u}
D_{\varphi\circ\tau}
M_{\omega_2\widetilde v}^*,
\]
and hence
\[
\|C_\psi^*C_\varphi\|_{e,\mathcal D_\alpha}
=
\|M_{\omega_1\widetilde u}
D_{\varphi\circ\tau}
M_{\omega_2\widetilde v}^*\|_{e,A_\alpha^2}
\asymp
\|D_{\varphi\circ\tau}\|_{e,A_\alpha^2}.
\]
Applying Lemma~\ref{lem5.12} with $\eta=\varphi\circ\tau$ completes the
proof.
\end{proof}

\medskip\noindent\textbf{Data availability.} This manuscript has no associated data.

\medskip\noindent\textbf{Conflict of interests.} The authors declare no
competing interests.


\begin{thebibliography}{99}

\bibitem{BourassEtAl2023} M. Bourass, I. Marrhich, and F. Mkadmi,
\newblock Composition operators on weighted analytic spaces,
\newblock {Canad. Math. Bull.}, 66(4) (2023), 1213--1230. MR~4658213.

\bibitem{ChaconChacon2005} G. A. Chac\'on and G. R. Chac\'on,
\newblock Some properties of composition operators on the {D}irichlet space,
\newblock {Acta Math. Univ. Comenianae (N.S.)}, 74(2) (2005), 259--272. MR~2195485.

\bibitem{CliffordLeWiggins2014} J. H. Clifford, T. Le, and A. Wiggins,
\newblock Invertible composition operators: the product of a composition operator with the adjoint of a composition operator,
\newblock {Complex Anal. Oper. Theory}, 8(8) (2014), 1699--1706. MR~3275439.

\bibitem{CliffordLeviNarayan2012} J. H. Clifford, D. Levi, and S. K. Narayan,
\newblock Commutator of composition operators with adjoints of composition operators,
\newblock {Complex Var. Elliptic Equ.}, 57(6) (2012), 677--686. MR~2916827.

\bibitem{CliffordZheng1999} J. H. Clifford and D. Zheng,
\newblock Composition operators on the {H}ardy space,
\newblock {Indiana Univ. Math. J.}, 48(4) (1999), 1585--1616. MR~1757084.

\bibitem{CliffordZheng2003} J. H. Clifford and D. Zheng,
\newblock Composition operators on {B}ergman spaces,
\newblock {Chinese Ann. Math. Ser. B}, 24(4) (2003), 433--448. MR~2024982.

\bibitem{Cowen1988} C. C. Cowen,
\newblock Linear fractional composition operators on {$H^2$},
\newblock {Integral Equations Operator Theory}, 11(2) (1988), 151--160. MR~928479.

\bibitem{CowenMacCluer1995} C. C. Cowen and B. D. MacCluer,
\newblock Composition Operators on Spaces of Analytic Functions,
\newblock Studies in Advanced Mathematics, CRC Press, Boca Raton, FL, 1995. MR~1397026.

\bibitem{CuckovicLe2016} \v{Z}. \v{C}u\v{c}kovi\'{c} and T. Le,
\newblock Adjoints of linear fractional composition operators on weighted {H}ardy spaces,
\newblock {Acta Sci. Math. (Szeged)}, 82(3--4) (2016), 651--662. MR~3616200.

\bibitem{GallardoMontes2003} E. A. Gallardo-Guti\'errez and A. Montes-Rodr\'iguez,
\newblock Adjoints of linear fractional composition operators on the {D}irichlet space,
\newblock {Math. Ann.}, 327(1) (2003), 117--134. MR~2005124.

\bibitem{Hurst1997} P. R. Hurst,
\newblock Relating composition operators on different weighted {H}ardy spaces,
\newblock {Arch. Math. (Basel)}, 68(6) (1997), 503--513. MR~1444662.

\bibitem{LefevreEtAl2013} P. Lef\`evre, D. Li, H. Queff\'elec, and L. Rodr\'iguez-Piazza,
\newblock Compact composition operators on the {D}irichlet space and capacity of sets of contact points,
\newblock {J. Funct. Anal.}, 264(4) (2013), 895--919. MR~3004952.

\bibitem{LefevreEtAl2015} P. Lef\`evre, D. Li, H. Queff\'elec, and L. Rodr\'iguez-Piazza,
\newblock Approximation numbers of composition operators on the {D}irichlet space,
\newblock {Ark. Mat.}, 53(1) (2015), 155--175. MR~3319618.

\bibitem{LengZhao2026} Q. Leng and X. Zhao,
\newblock Schatten class membership of products of composition operators and their adjoints,
\newblock preprint, 2026.

\bibitem{LiLuYu2018} Y. Li, Y. Lu, and T. Yu,
\newblock The essential norms of composition operators on weighted {D}irichlet spaces,
\newblock {Bull. Aust. Math. Soc.}, 97(2) (2018), 297--307. MR~3772661.

\bibitem{LueckingZhu1992} D. H. Luecking and K. Zhu,
\newblock Composition operators belonging to the {S}chatten ideals,
\newblock {Amer. J. Math.}, 114(5) (1992), 1127--1145. MR~1183534.

\bibitem{MacCluerShapiro1986} B. D. MacCluer and J. H. Shapiro,
\newblock Angular derivatives and compact composition operators on the {H}ardy and {B}ergman spaces,
\newblock {Canad. J. Math.}, 38(4) (1986), 878--906. MR~854144.

\bibitem{PauPerez2013} J. Pau and P. A. P\'erez,
\newblock Composition operators acting on weighted {D}irichlet spaces,
\newblock {J. Math. Anal. Appl.}, 401(2) (2013), 682--694. MR~3018017.

\bibitem{PutinarTener2018} M. Putinar and J. E. Tener,
\newblock Singular values of weighted composition operators and second quantization,
\newblock {Int. Math. Res. Not. IMRN}, (2018), no.~20, 6426--6441. MR~3872328.

\bibitem{Shapiro1993} J. H. Shapiro,
\newblock Composition Operators and Classical Function Theory,
\newblock Universitext, Springer-Verlag, New York, 1993. MR~1237406.

\bibitem{Simon2005} B. Simon,
\newblock Trace Ideals and Their Applications, second ed.,
\newblock Mathematical Surveys and Monographs, vol.~120, American Mathematical Society, Providence, RI, 2005. MR~2154153.

\bibitem{Yamaji2013} S. Yamaji,
\newblock Positive {T}oeplitz operators on weighted {B}ergman spaces of a minimal bounded homogeneous domain,
\newblock {J. Math. Soc. Japan}, 65(4) (2013), 1101--1115. MR~3127818.

\bibitem{Zhu2007} K. Zhu,
\newblock Operator Theory in Function Spaces, second ed.,
\newblock Mathematical Surveys and Monographs, vol.~138, American Mathematical Society, Providence, RI, 2007. MR~2311536.

\bibitem{Zorboska1998} N. Zorboska,
\newblock Composition operators on weighted {D}irichlet spaces,
\newblock {Proc. Amer. Math. Soc.}, 126(7) (1998), 2013--2023. MR~1443862.

\end{thebibliography}
\end{document}